\lstdefinelanguage{pseudo}{
frame=BT,
mathescape,
morekeywords={1.,2.,3.,4.,5.,6.,7.,8.,9.,10.,11.,12.,(a),(b),(c),(d)},
basicstyle=\normalsize \ttfamily \color{black},
showspaces=false, 
showstringspaces=false,        
showtabs=false,
keywordstyle=\bfseries,
}
\newcommand{\llbrace}{ \{\!\{ }
\newcommand{\rrbrace}{ \}\!\} }
\def \Id {\operatorname{Id}}
\def \Hdiv {H(\operatorname{div})}
\def \etal {\emph{et al.}}
\numberwithin{equation}{section}
\theoremstyle{plain}
\newtheorem{theorem}{Theorem}[section]
\theoremstyle{remark}
\newtheorem{rmrk}[theorem]{Remark}
\theoremstyle{section}
\newtheorem{defin}[theorem]{Definition}
\theoremstyle{section}
\newtheorem{lemma}[theorem]{Lemma}
\providecommand{\keywords}[1]{{\small{\textbf{Keywords:}} #1}} 
\title{ An equilibrated fluxes approach to the certified descent algorithm for shape optimization using conforming finite element and discontinuous Galerkin discretizations }
\author{ M. Giacomini \footnotemark[1]\textsuperscript{ \ ,}\footnotemark[2]\textsuperscript{ \ ,}\footnotemark[3] }
\date{}
\begin{document}

\maketitle

\renewcommand{\thefootnote}{\fnsymbol{footnote}}

\footnotetext[1]{
CMAP, Inria, Ecole polytechnique, CNRS, Universit\'e Paris-Saclay, 91128 Palaiseau, France.
\texttt{matteo.giacomini@polytechnique.edu} }
\footnotetext[2]{
DRI Institut Polytechnique des Sciences Avanc\'ees, 63 Boulevard de Brandebourg, 94200 Ivry-sur-Seine, France. }
\footnotetext[3]{
\textit{Current affiliation:} Laboratori de C\`alcul Num\`eric, E.T.S. de Ingenieros de Caminos, Universitat Polit\`ecnica de Catalunya -- BarcelonaTech, Jordi Girona 1, E-08034 Barcelona, Spain.
\vspace{5pt} }

\footnotetext{
\textit{
M. Giacomini is member of the DeFI team at Inria Saclay \^Ile-de-France. 
}
}

\renewcommand{\thefootnote}{\arabic{footnote}}

\begin{abstract}
The certified descent algorithm (CDA) is a gradient-based method for shape optimization which certifies that the direction computed using the shape gradient is a genuine descent direction for the objective functional under analysis.
It relies on the computation of an upper bound of the error introduced by the finite element approximation of the shape gradient.
In this paper, we present a goal-oriented error estimator which depends solely on local quantities and is fully-computable.
By means of the equilibrated fluxes approach, we construct a unified strategy valid for both conforming finite element approximations and discontinuous Galerkin discretizations.
The new variant of the CDA is tested on the inverse identification problem of electrical impedance tomography: both its ability to identify a genuine descent direction at each iteration and its reliable stopping criterion are confirmed.
\end{abstract}
%

\keywords{
Shape optimization; Certified Descent Algorithm; A posteriori error estimator; 
Equilibrated fluxes; Conforming Finite Element; Discontinuous Galerkin; Electrical Impedance Tomography
}

\section{Introduction}
\label{ref:intro}

Shape optimization problems - that is optimization problems featuring shape-dependent functionals - have been successfully tackled in the literature by means of gradient-based methods.
The major problem of the existing strategies to solve shape optimization problems is represented by the choice of the stopping criterion when moving from the continuous framework to its discrete counterpart, e.g. by means of a finite element approximation.
As a matter of fact, stopping criteria based on the norm of the shape gradient may never be fulfilled if the tolerance is chosen too small with respect to the discretization. 
In order to circumvent this issue, in \cite{giacomini:hal-01201914} we proposed a strategy to solve shape optimization problems based on a certification procedure. Basic idea relies on the derivation of an upper bound of the error due to the approximation of the shape gradient to verify at each iteration that the direction computed using the discretized shape gradient is a genuine descent direction for the objective functional. 
The resulting algorithm obtained by coupling a descent method based on the shape gradient with an \emph{a posteriori} error estimator proved to automatically stop after generating a sequence of shapes that improved the value of the objective functional at each iteration.

Several works on the adaptive finite element method (AFEM) for shape optimization may be found in the literature \cite{AFEM-Maute, Alauzet, AFEM-Kikuchi}.
As a matter of fact, the idea of coupling shape optimization and \emph{a posteriori} error estimators is not new. It can be traced back to the work of Banichuk \etal \cite{MeshRef-Banichuk} and has been later extended by Morin \etal \cite{COV:8787109}: in these works, the authors split the error into a component due to the approximation of the geometry and another one related to the discretization of the boundary value problem. 
Concerning the latter, a key aspect of a \emph{good} estimator is the low computational cost associated to its derivation whence the great interest in estimators constructed using solely local quantities.
The construction of \emph{a posteriori} error estimators in the context of finite element approximations is another extensively investigated subject and we refer the reader to \cite{ainsworth2000posteriori, nochetto2009, verfurth2013} for a complete introduction to the field. 
In this work we consider a strategy - known as equilibrated fluxes approach - to derive fully-computable guaranteed \emph{a posteriori} error estimators for both conforming and discontinuous Galerkin discretizations.
Within the framework of conforming finite element, local $\Hdiv$-reconstructions leading to fully-computable upper bounds have been studied by several authors \cite{DBLP:journals/moc/DestuynderM99, doi:10.1137/S0036142903433790, Nicaise01042008, MR2373174, Vohralik2011}.
Moreover, we refer to \cite{doi:10.1137/060665993, MR2261011, MR2427189, MR2601287} for the main results obtained in recent years on equilibrated fluxes for discontinuous Galerkin formulations.

The present paper starts from the framework introduced in the aforementioned work \cite{giacomini:hal-01201914}, where we neglect the error due to the approximation of the geometry in order to focus on the component arising from the discretization of the governing equation. 
The novelty of our approach resides in the certification procedure for the descent direction, for which fully-computable \emph{a posteriori} error estimators are required. 
As a matter of fact, to the best of our knowledge all the works in the literature on AFEM for shape optimization focus on the qualitative information provided by the error estimators to drive mesh adaptation and do not exploit the quantitative information they carry to improve and automatize the overall optimization strategy. \\
To construct the required fully-computable \emph{a posteriori} error estimators, we follow the approach proposed by Ern and Vohral\'ik in \cite{doi:10.1137/130950100}: after presenting a thorough review of 
the recent developments in the field, the authors depict a unified framework for the construction and the analysis of \emph{a posteriori} error estimators based on equilibrated fluxes.
Thus, on the one hand, for the conforming finite element approximation we construct the equilibrated fluxes by introducing the mixed finite element formulation of a local boundary value problem with Neumann boundary conditions over patches of elements.
On the other hand, for the discontinuous Galerkin discretization we reconstruct the fluxes element-wise in the Raviart-Thomas finite element space  by specifying the values of the degrees of freedom using an average of the gradient of the solution at the interfaces.
We especially highlight the interest of this latter approach for the study of the problem of electrical impedance tomography (EIT). 
As a matter of fact, there has been a growing interest in recent years for a particular class of discontinuous Galerkin methods - known as symmetric weighted interior penalty discontinuous Galerkin - 
for problems featuring an inhomogeneous diffusion tensor \cite{doi:10.1137/050634736, MR2491426} as the one appearing in the EIT.

The rest of the paper is organized as follows. 
In section \ref{ref:ShapePB}, we recall the general formulation of a shape optimization problem, the so-called boundary variation algorithm and its improved version known as certified descent algorithm.
Then, we discuss the strategy to construct the \emph{a posteriori} estimator of the error in the shape gradient (Section \ref{ref:errorQoI}) and we introduce the problem of electrical impedance tomography 
(Section \ref{ref:InversePB}).
In section \ref{ref:ConformingFE} and \ref{ref:DiscontinuousGalerkin} we provide the details of the discretized formulations and the equilibrated fluxes estimators respectively for the conforming finite element and the discontinuous Galerkin approximations.
Eventually, in section \ref{ref:numerics} we present some numerical tests of the application of the CDA featuring the equilibrated fluxes estimators to the EIT problem and section \ref{ref:conclusion} summarizes our results.

\section{Gradient-based methods for shape optimization}
\label{ref:ShapePB}

In this section, we recall the abstract formulation of a shape optimization problem and a gradient-based method to solve it.
Let $\Omega \subset \mathbb{R}^d$ ($d \geq 2$) be an open domain with Lipschitz boundary $\partial\Omega$.
We introduce a separable Hilbert space $V_\Omega$ depending on $\Omega$, a continuous bilinear form $a_\Omega(\cdot,\cdot):V_\Omega \times V_\Omega \rightarrow \mathbb{R}$ and a continuous linear form $F_\Omega(\cdot)$ on $V_\Omega$.
We define the following state problem in $\Omega$: we seek $u_\Omega \in V_\Omega$ such that
\begin{equation}
a_\Omega(u_\Omega,\delta u) = F_\Omega(\delta u) 
\quad \forall \delta u \in V_\Omega .
\label{eq:state}
\end{equation}
Under the assumption that the bilinear form satisfies the inf-sup condition
$$
\adjustlimits\inf_{w \in V_\Omega} \sup_{v \in V_\Omega} 
\frac{a_\Omega(v,w)}{\|v\| \|w\|} 
= \adjustlimits\inf_{v \in V_\Omega} \sup_{w \in V_\Omega} 
\frac{a_\Omega(v,w)}{\|v\| \|w\|} > 0 
$$
problem \eqref{eq:state} has a unique solution $u_\Omega$.

Let us consider a cost functional $J(\Omega)=j(\Omega,u_\Omega)$ which depends on the domain $\Omega$ itself and on the solution $u_\Omega$ of the state equation. 
We denote the set of admissible domains in $\mathbb{R}^d$ with $\mathcal{U}_{\text{ad}}$  and we introduce the following problem for the minimization of the functional $J(\Omega)$:
\begin{equation}
\min_{\Omega \in \mathcal{U}_{\text{ad}}} J(\Omega) .
\label{eq:shapeOpt} 
\end{equation}
Hence, we seek a domain $\Omega$ that minimizes the functional $j(\Omega,u)$ under the constraint that $u$ is solution of the state equation \eqref{eq:state}. 
In the literature, \eqref{eq:shapeOpt} is called a shape optimization problem, that is a PDE-constrained optimization problem of a shape-dependent functional.

\subsection{Optimize-then-discretize: the boundary variation algorithm}
\label{ref:BVA}

This work exploits a gradient-based method for the numerical approximation of problem \eqref{eq:shapeOpt}. 
In particular, two main approaches have been proposed in the literature: the discretize-then-optimize strategy and the optimize-then-discretize one. 
The former relies on the idea of computing a discretized version of the objective functional and subsequently constructing its gradient to run the optimization procedure. 
The latter works the other way around, by first computing the gradient of the cost functional and then discretizing it for the optimization loop. 
The discretize-then-optimize strategy has two main drawbacks: on the one hand, the discretized functional may not be differentiable, thus limiting the possibility of using a gradient method; on the other hand, this approach may suffer from severe mesh dependency. 
Hence, we consider an optimize-then-discretize approach for problem \eqref{eq:shapeOpt} by studying a variant of the boundary variation algorithm (BVA) discussed in \cite{smo-AP}: this method relies on the computation of the so-called shape gradient which arises from the differentiation of the functional with respect to the shape (cf. section \ref{ref:shapeDifferentiation}).

The key aspect of the BVA is the computation of a descent direction for $J(\Omega)$, that is we seek a direction $\boldsymbol\theta$ along which the objective functional decreases.
Once a descent direction has been identified, the domain is deformed by means of a perturbation of the identity map $(\Id + \boldsymbol\theta)\Omega$.

\subsubsection{Differentiation with respect to the shape}
\label{ref:shapeDifferentiation}

Let $X \subset W^{1,\infty}(\Omega;\mathbb{R}^d)$ be a Banach space and $\boldsymbol\theta \in X$ be an admissible smooth deformation of $\Omega$. 
The cost functional $J(\Omega)$ is said to be $X$-differentiable at $\Omega \in \mathcal{U}_{\text{ad}}$ if there exists a continuous linear form $dJ(\Omega)$ on $X$ such that $\forall \boldsymbol\theta \in X$
$$
J((\Id+\boldsymbol\theta)\Omega) = J(\Omega) + \langle dJ(\Omega) , \boldsymbol\theta \rangle + 
o(\boldsymbol\theta).
$$
Several approaches are feasible to compute the shape gradient and we refer to \cite{giacomini:hal-01201914} for a brief review of the existing techniques.
In this work, we consider the material derivative approach \cite{sokolowski1992introduction}. 
Let us define a diffeomorphism $\varphi: \mathbb{R}^d \rightarrow \mathbb{R}^d$ such that every admissible set in $\mathcal{U}_{\text{ad}}$ may be written as $\Omega_{\varphi} \coloneqq \varphi(\Omega)$. \\
We introduce the Lagrangian functional, defined for every admissible open set $\Omega$ and every $u, \ p \in V_\Omega$ by
\begin{equation}
\mathcal{L}(\Omega,u,p) = j(\Omega,u) + a_\Omega(u,p) - F_\Omega(p).
\label{eq:Lagrangian}
\end{equation}
We define the following adjoint problem, in which we seek $p_\Omega \in V_\Omega$ such that
\begin{equation}
a_\Omega(\delta p,p_\Omega) + \left\langle \frac{\partial j}{\partial u}
(\Omega,u_\Omega), \delta p \right\rangle = 0 \quad \forall \delta p \in 
V_\Omega .
\label{eq:adjoint}
\end{equation}
Moreover, all functions $u_\varphi,\ p_\varphi \in V_{\Omega_\varphi}$ defined on the deformed domain $\Omega_{\varphi}$ may be mapped to the fixed reference domain $\Omega$ as follows:
\begin{align*}
& u_\varphi \coloneqq u\circ\varphi^{-1} \quad , \quad u \in V_\Omega , \\
& p_\varphi\coloneqq p\circ\varphi^{-1} \quad , \quad p \in V_\Omega .
\end{align*}
We admit that $u \mapsto u_\varphi$ is a one-to-one map between $V_\Omega$ and $V_{\Omega_\varphi}$. 
The Lagrangian \eqref{eq:Lagrangian} is said to admit a material derivative if there exists a linear form $\partial \mathcal{L}/\partial \varphi$ such that
$$
\mathcal{L}(\Omega_{\varphi},u_{\varphi},p_{\varphi}) = \mathcal{L}(\Omega,u,p) 
+ \left\langle \frac{\partial \mathcal{L}}{\partial \varphi}(\Omega,u,p) , 
\boldsymbol\theta \right\rangle + o(\boldsymbol\theta)
$$
where $\varphi=\Id+\boldsymbol\theta$.
Provided that $u_\varphi$ is differentiable with respect to $\varphi$ at $\varphi=\Id$ in $V_{\Omega_\varphi}$, from the fast derivation method of C\'ea \cite{Cea1986} we obtain the following expression for the shape derivative:
\begin{equation}
\langle dJ(\Omega),\boldsymbol\theta \rangle = \left\langle 
\frac{\partial \mathcal{L}}{\partial \varphi}(\Omega,u_\Omega,p_\Omega) , 
\boldsymbol\theta \right\rangle.
\label{eq:shapeDerDiff} 
\end{equation}
\begin{rmrk}
The application of the fast derivation method of C\'ea requires some caution. 
In \cite{PANTZ-sauts}, Pantz prove that if the solution of the state problem lacks of regularity (e.g. in the case of interface problems), the aforementioned method may lead to inaccurate expressions of the shape derivative. 
As a matter of fact, within the context of interface problems - as the electrical impedance tomography problem discussed in section \ref{ref:InversePB} - the discontinuity of the conductivity parameter is responsible for the solution of the state problem to be non-differentiable in the classical sense. 
Hence, a modified Lagrangian functional has to be introduced to correctly compute the expression of the shape derivative by exploiting the regularity of the restrictions of the solution of the state problem to the subdomains excluding the interface. 
For a detailed description of this procedure and possible alternative approaches, we refer to \cite{PANTZ-sauts, LaurainSturm2015}.
\end{rmrk}
We remark that the previously introduced shape derivative belongs to the dual space $X^*$. 
The corresponding shape gradient is obtained as its Riesz representative by introducing an appropriate scalar product on the space $X$. 
For the sake of simplicity, henceforth we consider $X$ to be a Hilbert space. 
Under this assumption, we may define the following variational problem to compute a descent direction for $J(\Omega)$: we seek $\boldsymbol\theta \in X$ such that
\begin{equation}
( \boldsymbol\theta, \boldsymbol\delta\boldsymbol\theta )_X + \langle dJ(\Omega),\boldsymbol\delta\boldsymbol\theta \rangle = 0 \quad 
\forall \boldsymbol\delta\boldsymbol\theta \in X .
\label{eq:variationalP}
\end{equation}
A key aspect in \eqref{eq:variationalP} is the choice of an appropriate scalar product. 
Several approaches have been proposed in the literature, spanning from the traction method inspired by the linear elasticity equation \cite{tractionMethod} to the classical $L^2$ and $H^1$ scalar products \cite{Dogan20073898}. 
A thorough discussion on this subject is presented in \cite{MR3582826} where the authors employ a Steklov-Poincar\'e-type metric to devise novel efficient algorithms based on shape derivatives.
For the rest of this paper, the classical $H^1$ scalar product is employed to construct the descent direction $\boldsymbol\theta$. 
This choice is motivated on the one hand by its simplicity from the point of view of the implementation and on the other hand by the additional regularity it introduces with respect to the $L^2$ scalar product, thus leading to smoother deformations of the shape as we will observe in the numerical simulations in subsections \ref{ref:1incl} and \ref{ref:2incl}.

\subsection{The certified descent algorithm}
\label{ref:CDA}

In this section, we introduce the finite element approximations of the state problem \eqref{eq:state} and adjoint problem \eqref{eq:adjoint} and we present the conditions that the discretized direction $\boldsymbol\theta^h$ has to fulfill in order to be a genuine descent direction for the functional $J(\Omega)$. \\
First, we define the bilinear form $a_\Omega^h(\cdot,\cdot)$ and the linear form $F_\Omega^h(\cdot)$ associated with the following discrete state problem: we seek $u_\Omega^h \in V_\Omega^{h,\ell}$ such that
\begin{equation}
a_\Omega^h(u_\Omega^h,\delta u^h) = F_\Omega^h(\delta u^h) 
\quad \forall \delta u^h \in V_\Omega^{h,\ell} 
\label{eq:stateDiscrete}
\end{equation}
where $V_\Omega^{h,\ell}$ is an appropriate finite element or discontinuous Galerkin approximation space featuring basis functions of degree $\ell$.
Following the same procedure, we introduce the discrete adjoint problem which consists in seeking $p_\Omega^h \in V_\Omega^{h,\ell}$ such that 
\begin{equation*}
a_\Omega^h(\delta p^h, p_\Omega^h) + \left\langle \frac{\partial j}{\partial u}
(\Omega,u_\Omega^h), \delta p^h \right\rangle = 0 \quad \forall \delta p^h \in 
V_\Omega^{h,\ell} .
\label{eq:adjointDiscreteDJ}
\end{equation*}
The details on the approximation space $V_\Omega^{h,\ell}$ will be discussed in section \ref{ref:ConformingFE} for the case of conforming finite element and in section \ref{ref:DiscontinuousGalerkin} for the 
discontinuous Galerkin approximation. \\
We may now introduce the discretized shape derivative $\langle d_h J(\Omega),\boldsymbol\delta\boldsymbol\theta \rangle$ defined as
\begin{equation}
\langle d_h J(\Omega),\boldsymbol\delta\boldsymbol\theta \rangle \coloneqq \left\langle 
\frac{\partial 
\mathcal{L}}{\partial \varphi}(\Omega,u_\Omega^h,p_\Omega^h), \boldsymbol\delta\boldsymbol\theta 
\right\rangle.
\label{eq:discreteShapeGrad}
\end{equation}
The discretized direction $\boldsymbol\theta^h \in X$ is computed as the solution of problem \eqref{eq:variationalP} substituting $d J(\Omega)$ by $d_h J(\Omega)$.
We recall the following definition:
\begin{defin}
A direction $\boldsymbol\theta$ is said to be a genuine descent direction for the functional $J(\Omega)$ if 
\begin{equation}
\langle d J(\Omega),\boldsymbol\theta \rangle < 0 .
\label{eq:descentDirection}
\end{equation}
\end{defin}
\noindent It is straightforward to observe that a direction $\boldsymbol\theta$ fulfilling \eqref{eq:descentDirection} is such that $J(\Omega)$ decreases along $\boldsymbol\theta$, that is $J((\Id + \boldsymbol\theta)\Omega) < J(\Omega)$. \\
Nevertheless, due to the numerical error introduced by the finite element discretization, even though $\langle d_h J(\Omega),\boldsymbol\theta^h \rangle < 0$, \ $\boldsymbol\theta^h$ is not necessarily a genuine descent direction for the functional $J(\Omega)$. 
In the following subsection, we introduce the notion of certified descent direction and we describe a procedure that allows to identify a genuine descent direction for $J(\Omega)$ by solving problem \eqref{eq:variationalP} with the discretized shape derivative $d_h J(\Omega)$ and subsequently accounting for the error in the shape gradient.

\subsubsection{Certification of a genuine descent direction}
\label{ref:certification}

Let us define the error $E^h$ due to the approximation of the shape gradient as follows:
\begin{equation}
E^h \coloneqq \langle dJ(\Omega) - d_h J(\Omega),\boldsymbol\theta^h \rangle.
\label{eq:errorH}
\end{equation}
From \eqref{eq:errorH}, it follows that 
\begin{equation}
\langle d J(\Omega),\boldsymbol\theta^h \rangle  = \langle d_h J(\Omega),\boldsymbol\theta^h \rangle + E^h .
\label{eq:gradH+err}
\end{equation}
As stated before, $\boldsymbol\theta^h$ is constructed starting from \eqref{eq:variationalP} and substituting the expression of the shape derivative with its discrete counterpart \eqref{eq:discreteShapeGrad}.
This results in a discretized direction $\boldsymbol\theta^h$ such that $\langle d_h J(\Omega),\boldsymbol\theta^h \rangle < 0$.
Nevertheless, in order for $\boldsymbol\theta^h$ to be a descent direction for the objective functional, condition \eqref{eq:descentDirection} has to be fulfilled thus the quantity $E^h$ in \eqref{eq:gradH+err} has to be accounted for.
Within this framework, we obtain the following condition on $\boldsymbol\theta^h$:
\begin{equation}
\langle d_h J(\Omega),\boldsymbol\theta^h \rangle + E^h < 0 .
\label{eq:condition-with-err}
\end{equation}
This condition does not imply that $\boldsymbol\theta^h$ is a genuine descent direction for $J(\Omega)$ since the quantity $E^h$ may be either positive or negative. 
In order to derive a relationship that stands independently of the sign of $E^h$ and since no a priori information on the aforementioned sign is available, we modify \eqref{eq:condition-with-err} by introducing the absolute value of the error in the shape gradient:
\begin{equation*}
\langle d_h J(\Omega),\boldsymbol\theta^h \rangle + E^h \leq \langle d_h J(\Omega),\boldsymbol\theta^h \rangle + | E^h | < 0.
\label{eq:Grad+AbsErr}
\end{equation*}
We may now introduce the following definition:
\begin{defin}
Let $\overline{E}$ be the upper bound of the error $| E^h |$ in the shape gradient. A direction $\boldsymbol\theta^h$ is said to be a certified descent direction for the functional $J(\Omega)$ if 
\begin{equation}
\langle d_h J(\Omega),\boldsymbol\theta^h \rangle + \overline{E} < 0.
\label{eq:certified}
\end{equation}
\end{defin}
\noindent The expression \emph{certified} is due to the fact that a direction constructed within this framework is verified to be a genuine descent direction for the functional $J(\Omega)$. 
As a matter of fact, it is straightforward to observe that if $\boldsymbol\theta^h$ fulfills \eqref{eq:certified}, then it verifies \eqref{eq:descentDirection} as well.
\begin{rmrk}
It is important to observe that a direction fulfilling \eqref{eq:certified} is a genuine descent direction for $J(\Omega)$, whether it is the solution of equation \eqref{eq:variationalP} or not. 
This is extremely important since the computation of the descent direction is done through the discretization of the aforementioned variational problem, that is $\boldsymbol\theta^h$ is only an approximation of the direction $\boldsymbol\theta$ solution of \eqref{eq:variationalP}.
\end{rmrk}

In \cite{1742-6596-657-1-012004,giacomini:hal-01201914}, we coupled the certification procedure to the boundary variation algorithm and we derived a new gradient-based method for shape optimization named certified descent algorithm (CDA - script \ref{scpt:shape-opt-adaptive}).
On the one hand, the computation of the upper bound of the numerical error in the shape gradient provides useful information to identify a certified descent direction thus improving the objective functional at each iteration of the optimization strategy. 
On the other hand, owing to the quantitative information encapsulated in $\overline{E}$ the CDA features a guaranteed stopping criterion for the overall optimization procedure. \\
The key aspect of this algorithm is the derivation of a fully-computable guaranteed estimator of the error in the shape gradient.
In particular, in section \ref{ref:errorQoI} we introduce an approach based on the equilibrated fluxes method which relies solely on the computation of local quantities.

\subsubsection{The CDA workflow}

At each iteration, the algorithm solves the state and adjoint problems and computes a descent direction $\boldsymbol\theta^h$.
Then, an upper bound of the numerical error in the shape derivative along the direction $\boldsymbol\theta^h$ is derived. 
If condition \eqref{eq:certified} is not fulfilled, the mesh is adapted in order to improve the error estimate.
This procedure is iterated  until the direction $\boldsymbol\theta^h$ is a certified descent direction for $J(\Omega)$. 
Once a certified descent direction has been identified, we compute a step $\mu$ via an Armijo rule and the shape of the domain is updated according to the computed perturbation of the identity $\Id + \mu \boldsymbol\theta^h$. 
Eventually, a novel stopping criterion is proposed in order to use the information embedded in the error bound $\overline{E}$ to derive a reliable condition to end the evolution of the algorithm.

A key aspect of the sketched procedure is the mesh adaptation routine that is performed if condition \eqref{eq:certified} is not fulfilled. 
In order to reduce the quantity $\overline{E}$ at each iteration, we construct an indicator based on the error in the shape gradient and we drive the mesh adaptation using the information carried by the indicator itself. 
This strategy is known as goal-oriented mesh adaptation (cf. \cite{Oden2001735}) and aims to identify the areas of the domain that are mainly responsible for the error in the target quantity and reduce it by means of a local refinement in order to limit the insertion of new degrees of freedom.
\begin{lstlisting}[language=pseudo, escapeinside={/*@}{@*/}, 
caption={The certified descent algorithm (CDA)}, 
label=scpt:shape-opt-adaptive]
Given the domain $\Omega_0$, set $\texttt{tol}>0$, $j=0$ and iterate:
1. Solve the state problem in $\Omega_j$;
2. Solve the adjoint problem in $\Omega_j$;
3. Compute a descent direction $\boldsymbol\theta^h_j$;
4. Compute an upper bound $\overline{E}$ of the numerical error $E^h$;
5. If $\langle d_hJ(\Omega_j),\boldsymbol\theta^h_j \rangle + \overline{E} \geq  
0$, refine the mesh and go to 1;
6. Identify an admissible step $\mu_j$;
7. Update the shape $\Omega_{j+1} = (\Id + \mu_j \boldsymbol\theta^h_j) 
\Omega_j$;
8. While $| \langle d_hJ(\Omega_j),\boldsymbol\theta^h_j \rangle | + 
\overline{E} 
>  \texttt{tol}$, $j=j+1$ and repeat.
\end{lstlisting}

\section{Approximation error for the shape gradient}
\label{ref:errorQoI}

In this section, we present a strategy to construct a fully-computable guaranteed upper bound $\overline{E}$ of the error $E^h$ in the approximation of the shape gradient in order to practically implement the certification procedure described in section \ref{ref:certification}.

\subsection{Bound for the approximation error of a linear functional}

Let us recall the framework described in \cite{Oden2001735} for the derivation of an estimate of the error in a bounded linear functional. 
We consider a quantity of interest $Q: V_\Omega \rightarrow \mathbb{R}$ which we aim to evaluate for the function $u_\Omega$, solution of the state problem \eqref{eq:state}. 
We introduce the solution $u_\Omega^h$ of the corresponding discretized problem \eqref{eq:stateDiscrete} and we seek an estimate of the error in the target functional $Q$:
\begin{equation*}
E_Q \coloneqq Q(u_\Omega) - Q(u_\Omega^h) = Q(u_\Omega-u_\Omega^h)
\label{eq:linFunc}
\end{equation*}
where the first equality follows from the linearity of $Q$.
We introduce an adjoint problem featuring the quantity $Q$ as right-hand side, that is we seek an influence function $r_\Omega \in V_\Omega$ such that
\begin{equation}
a_\Omega(\delta r,r_\Omega) = Q(\delta r)
\quad \forall \delta r \in V_\Omega .
\label{eq:adjointGoalOriented}
\end{equation}
We remark that problem \eqref{eq:adjointGoalOriented} is well-posed owing to the Lax-Milgram theorem.
The approximation of \eqref{eq:adjointGoalOriented} is obtained following the framework introduced in section \ref{ref:CDA} for the state problem.
It is well-known in the literature (cf. e.g. \cite{MR2899560}) that in order to retrieve a sharp upper bound of the error in a quantity of interest, a higher-order approximation has to be employed for the solution of the adjoint problem. 
Let $a_\Omega^h(\cdot,\cdot)$ be the discrete bilinear form and $V_\Omega^{h,m}$ the space of finite element (respectively discontinuous Galerkin) functions of degree $m, \ m > \ell$. 
We seek $r_\Omega^h \in V_\Omega^{h,m}$ such that
\begin{equation}
a_\Omega^h(\delta r^h, r_\Omega^h) = Q(\delta r^h) 
\quad \forall \delta r^h \in V_\Omega^{h,m} .
\label{eq:adjointDiscrete}
\end{equation}
From \eqref{eq:adjointGoalOriented} and \eqref{eq:state} it is straightforward to observe that
\begin{equation*}
F_\Omega(r_\Omega) = a_\Omega(u_\Omega,r_\Omega) = Q(u_\Omega) .
\label{eq:primalDual}
\end{equation*}
Thus, the following relationship between the error in the approximation of the target functional and the solutions of the state and adjoint problems is derived:
\begin{equation}
E_Q = Q(u_\Omega) - Q(u_\Omega^h) = F_\Omega(r_\Omega) - a_\Omega^h(u_\Omega^h,r_\Omega^h) = F_\Omega(r_\Omega) - a_\Omega^h(u_\Omega^h,r_\Omega)  
\label{eq:errGOAL}
\end{equation}
where the first equality follows from the approximation \eqref{eq:adjointDiscrete} of the adjoint problem \eqref{eq:adjointGoalOriented} whereas the justification of the last one exploits different properties when dealing with conforming finite element or discontinuous Galerkin approximations. \\
For the case of conforming finite element approximations, we have that $a_\Omega^h(\cdot,\cdot) = a_\Omega(\cdot,\cdot)$ and $a_\Omega(\delta r^h,r_\Omega) = Q(\delta r^h)$ stands for all $\delta r^h \in V_\Omega^{h,m}$ owing to \eqref{eq:adjointGoalOriented} and the fact that the discretization space $V_\Omega^{h,m}$ is a subspace of $V_\Omega$.
Thus the last equality in \eqref{eq:errGOAL} reduces to the classical expression of the residue of the state equation applied to the function $r_\Omega$:
$$
R_\Omega^u(r_\Omega) \coloneqq F_\Omega(r_\Omega) - a_\Omega(u_\Omega^h,r_\Omega) .
$$
On the contrary, when dealing with discontinuous Galerkin formulations, the expression of the discrete bilinear form also features the terms associated with the jumps of the discontinuous functions and it cannot be identified with its continuous version. 
Within this framework, the last equality in \eqref{eq:errGOAL} stands if the numerical method used to discretize the adjoint problem is consistent, that is if $a_\Omega^h(\delta r^h, r_\Omega) = Q(\delta r^h) \ \forall \delta r^h \in V_\Omega^{h,m}$.
The adjoint consistency is equivalent to the usual Galerkin orthogonality property in finite element and we refer to \cite{MR2361907} for more details on its role in the construction of discretizations of optimal order in terms of target functionals.
In section \ref{ref:DiscontinuousGalerkin}, we provide some details on the discontinuous Galerkin strategy chosen for the test case of electrical impedance tomography and we refer to \cite{MR2882148} for a complete analysis of the numerical scheme.

\subsection{Goal-oriented estimate of the error in the shape gradient}
\label{ref:GOAL}

In order to apply the technique described in the previous section to the estimate of the error in the shape gradient, we need to extend the previously introduced framework to the case of non-linear quantities of interest. 
We rely on the approach in \cite{2006-nonlin}, by performing a linearization of the target functional that leads to the introduction of the following linearized error $\widetilde{E}^h$:
\begin{equation}
\begin{aligned}
E^h = & \left\langle \frac{\partial \mathcal{L}}{\partial \varphi}
(\Omega,u_\Omega,p_\Omega) - \frac{\partial\mathcal{L}}{\partial\varphi}
(\Omega,u_\Omega^h,p_\Omega^h), \boldsymbol\theta^h \right\rangle \\
 & \simeq \frac{\partial^2 \mathcal{L}}{\partial \varphi \partial u}
 (\Omega,u_\Omega^h,p_\Omega^h)[\boldsymbol\theta^h,u_\Omega-u_\Omega^h] 
+ \frac{\partial^2 \mathcal{L}}{\partial \varphi \partial p}
(\Omega,u_\Omega^h,p_\Omega^h)[\boldsymbol\theta^h,p_\Omega-p_\Omega^h] \eqqcolon 
\widetilde{E}^h.
\end{aligned}
\label{eq:errDJ}
\end{equation}
For the rest of this paper, we neglect the linearization error introduced in \eqref{eq:errDJ} and we construct an estimator to evaluate the error in the shape gradient by considering solely the information in $\widetilde{E}^h$. 
More precisely, the quantities on the second line of \eqref{eq:errDJ} will serve as linear functionals to develop the goal-oriented estimate sketched in the previous subsection.
In particular, we introduce two adjoint problems associated with the terms on the right-hand side of \eqref{eq:errDJ} and we seek $r_\Omega,s_\Omega \in V_\Omega$ such that
\begin{equation}
\begin{aligned}  
& a_\Omega(\delta r,r_\Omega) = \frac{\partial^2 \mathcal{L}}
{\partial\varphi \partial u}(\Omega,u_\Omega^h,p_\Omega^h)[\boldsymbol\theta^h,\delta 
r] \quad \forall \delta r \in V_\Omega , \\
& a_\Omega(\delta s,s_\Omega) = \frac{\partial^2 \mathcal{L}}
{\partial\varphi \partial p}(\Omega,u_\Omega^h,p_\Omega^h)[\boldsymbol\theta^h,\delta 
s] \quad \forall \delta s \in V_\Omega .
\end{aligned}
\label{eq:adjointDJ}
\end{equation}
The problems \eqref{eq:adjointDJ} are well-posed since their right-hand sides are  linear and continuous forms on $V_\Omega$.
For the corresponding conforming finite element (respectively discontinuous Galerkin) discretizations of problems \eqref{eq:adjointDJ}, we seek $r_\Omega^h,s_\Omega^h \in V_\Omega^{h,m}$ such that
\begin{equation*}
\begin{aligned}  
& a_\Omega^h(\delta r^h,r_\Omega^h) = \frac{\partial^2 \mathcal{L}}
{\partial\varphi \partial u}(\Omega,u_\Omega^h,p_\Omega^h)[\boldsymbol\theta^h,\delta 
r^h] \quad \forall \delta r^h \in V_\Omega^{h,m} , \\
& a_\Omega^h(\delta s^h,s_\Omega^h) = \frac{\partial^2 \mathcal{L}}
{\partial\varphi \partial p}(\Omega,u_\Omega^h,p_\Omega^h)[\boldsymbol\theta^h,\delta 
s^h] \quad \forall \delta s^h \in V_\Omega^{h,m} .
\end{aligned}
\label{eq:adjointDJdiscretized}
\end{equation*}

Let us now define two quantities associated respectively with the contribution of the state error $u_\Omega-u_\Omega^h$ and the adjoint error $p_\Omega-p_\Omega^h$ to the linearized error in the shape gradient $\widetilde{E}^h$:
\begin{align}
& \widetilde{E}^h_u= \frac{\partial^2 \mathcal{L}}{\partial \varphi \partial u}
 (\Omega,u_\Omega^h,p_\Omega^h)[\boldsymbol\theta^h,u_\Omega-u_\Omega^h] = F_\Omega(r_\Omega) - a_\Omega^h(u_\Omega^h,r_\Omega) ,
 \label{eq:Eu} \\
& \widetilde{E}^h_p = \frac{\partial^2 \mathcal{L}}{\partial \varphi \partial p}
(\Omega,u_\Omega^h,p_\Omega^h)[\boldsymbol\theta^h,p_\Omega-p_\Omega^h] = - \left\langle \frac{\partial j}{\partial u}
(\Omega,u_\Omega), s_\Omega \right\rangle - a_\Omega^h(p_\Omega^h,s_\Omega) .
 \label{eq:Ep}
\end{align}
It is straightforward to observe that $\widetilde{E}^h =\widetilde{E}^h_u + \widetilde{E}^h_p$. In order to derive a practical strategy to perform the certification procedure in section \ref{ref:certification} by verifying condition \eqref{eq:certified}, we have to compute an upper bound $\overline{E}$ of the error $| E^h |$, that is $| \widetilde{E}^h |$ under the aforementioned assumption of a negligible linearization error. \\
In this paper, we propose an implicit error estimator based on the equilibrated fluxes method \cite{ainsworth2000posteriori}. 
This technique provides a fully-computable guaranteed upper bound of the error and relies solely on the computation of local quantities, namely the equilibrated fluxes.
A detailed description of the construction of the equilibrated fluxes for the state and adjoint problems and the derivation of the estimator of the error in the shape gradient starting from the quantities $\widetilde{E}^h_u$ and $\widetilde{E}^h_p$ is presented in the following sections. 
Since the expressions of both the equilibrated fluxes and the error estimator depend on the nature of the problem under analysis, in the next section we introduce the formulation of the inverse identification problem of electrical impedance tomography that will act as proof of concept for the discussed method.

\section{Electrical impedance tomography}
\label{ref:InversePB}

We consider the inverse identification problem of electrical impedance tomography in the most classical Point Electrode Model. 
We consider a shape optimization formulation for this inverse problem and we solve it by means of a version of the certified descent algorithm (cf. algorithm \ref{scpt:shape-opt-adaptive}) featuring an equilibrated fluxes strategy for the certification procedure. \\
It is well-known in the literature that the problem of electrical impedance tomography is severely ill-posed.
Moreover, classical shape optimization methods proved to be highly unsatisfactory by remaining trapped in local minima and consequently providing fairly poor reconstructions of the inclusions.
The certified descent algorithm does not aim to solve the known issues of gradient-based strategies when dealing with ill-posed problems.
Nevertheless, the interest in the EIT problem is twofold. On the one hand, the EIT is a non-trivial scalar problem that may guide towards the establishment of some properties of this new version of the certified descent algorithm. 
On the other hand, the CDA confirms the aforementioned limitations of gradient-based methods applied to inverse problems. 
In particular, the rapidly increasing number of degrees of freedom required for the certification procedure highlights the severe ill-posedness of the EIT problem.
We refer the reader interested in an overview of the methods investigated in the literature for the EIT problem to \cite{LaurainSturm2015, MR2211069, MR2407028} for shape optimization approaches, 
to \cite{Hintermuller2008, MR2886190, MR2966180} for topology optimization strategies and to \cite{MR3019471, MR2132313, holder2004electrical} for regularization techniques.

We are now ready to introduce the formulation of the electrical impedance tomography problem.
Let us consider an open domain $\mathcal{D} \subset \mathbb{R}^2$ featuring an open subdomain $\Omega \subset\subset \mathcal{D}$ such that the electrical conductivity is piecewise constant in $\Omega$ and $\mathcal{D} \setminus \Omega$:
\begin{equation*}
k_\Omega \coloneqq k_I \chi_\Omega + k_E (1-\chi_\Omega)
\label{eq:conductivity}
\end{equation*}
where $k_I$ and $k_E$ are two positive parameters and $\chi_\Omega$ is the characteristic function of $\Omega$. 
The goal is to identify the location and the shape of the inclusion $\Omega$ fitting non-invasive measurements $g$ and $U_D$ respectively of the flux and the potential taken on the external boundary $\partial\mathcal{D}$. 
In order to solve this problem, we introduce two boundary value problems (Section \ref{ref:state}) and owing to \cite{CPA:CPA3160400605} we define a minimization problem for an objective functional inspired by the Kohn-Vogelius one (Section \ref{ref:shapeOpt}).

The inverse problem of electrical impedance tomography - also known as Calder\'on's problem - has been extensively studied over the years. 
We suggest that the interested reader reviews \cite{Calderon1980, Cheney99electricalimpedance, 0266-5611-18-6-201} for a detailed exposition of the physical problem, its mathematical formulation 
and its numerical approximation.

\subsection{State problems}
\label{ref:state}

We consider $g \in L^2(\partial\mathcal{D})$ and $U_D \in H^{\frac{1}{2}}(\partial\mathcal{D})$ as boundary data for the flux and the potential.
Let $i=N,D$ be the subscripts associated respectively with Neumann and Dirichlet boundary conditions. We introduce the boundary value problems
\begin{align}
\left\{
\begin{aligned}
& -k_\Omega \Delta u_{\Omega,i} + u_{\Omega,i} = 0 \quad & \text{in} \ 
\mathcal{D} \setminus \partial\Omega \\
& \llbracket u_{\Omega,i} \rrbracket = 0 \quad & \text{on} \ \partial\Omega \\
& \llbracket k_\Omega \nabla u_{\Omega,i} \cdot \mathbf{n} \rrbracket = 0 \quad & 
\text{on} \ \partial\Omega
\end{aligned}
\right.
\label{eq:statePB}
\end{align}
with the following sets of boundary conditions on $\partial\mathcal{D}$:
\begin{gather}
k_E \nabla u_{\Omega,N} \cdot \mathbf{n} = g ,
\label{eq:NeumannBC} \\
u_{\Omega,D} = U_D .
\label{eq:DirichletBC}
\end{gather}

\subsection{Shape derivative of the Kohn-Vogelius functional}
\label{ref:shapeOpt}

Let us consider the following objective functional inspired by the work of Kohn and Vogelius \cite{CPA:CPA3160400605}:
\begin{equation}
J(\Omega) = \frac{1}{2}\int_\mathcal{D}{\Big( k_\Omega \left| 
\nabla(u_{\Omega,N} - u_{\Omega,D})\right|^2 + |u_{\Omega,N} - u_{\Omega,D}|^2  
\Big) d\mathbf{x}}.
\label{eq:kohn-vogelius}
\end{equation}
The problem of retrieving the inclusion $\Omega$ starting from the boundary measurements $g$ and $U_D$ may be viewed as an optimization problem in which we seek the open subset that minimizes \eqref{eq:kohn-vogelius}, $u_{\Omega,N}$ and $u_{\Omega,D}$ being the solutions of the state problems \eqref{eq:statePB} with boundary conditions given respectively by the Neumann \eqref{eq:NeumannBC} and the Dirichlet \eqref{eq:DirichletBC} data. 

As stated in section \ref{ref:shapeDifferentiation}, in order to differentiate a functional with respect to the shape, we introduce an adjoint problem for each state variable. 
Owing to the fact that the Kohn-Vogelius problem is self-adjoint, we get that $p_{\Omega,N}=u_{\Omega,N} - u_{\Omega,D}$ and $p_{\Omega,D}=0$. 
In this work, we consider a volumetric formulation of the shape derivative. 
As a matter of fact, it has been recently proved by Hiptmair and co-workers (cf. \cite{HPS_bit}) that the volumetric expression provides better numerical accuracy than its corresponding surface representation\footnote{
We refer to \cite{HPS_bit} for a detailed comparison of the volumetric and surface expressions of the shape gradient for elliptic state problems. 
In particular, in this work the authors prove that within the framework of finite element discretizations, a better numerical accuracy is achieved when using the volumetric formulation of the shape gradient.
Similar results for the case of interface problems are available in \cite{MR3467382}.}. \\
Let $\boldsymbol\theta \in W^{1,\infty}(\mathcal{D};\mathbb{R}^2)$ be an admissible deformation of the domain such that $\boldsymbol\theta=\mathbf{0} \ \text{on} \ \partial\mathcal{D}$. 
We define $\mathbf{M}(\boldsymbol\theta) = \nabla \boldsymbol\theta + \nabla \boldsymbol\theta^T - (\nabla \cdot \boldsymbol\theta) \Id$. 
By introducing the following operator
\begin{equation*}
\langle G(\Omega,u) , \boldsymbol\theta \rangle = \frac{1}{2}\int_\mathcal{D}{  
\Big(k_\Omega \mathbf{M}(\boldsymbol\theta) \nabla u \cdot \nabla u - \nabla \cdot \boldsymbol\theta \ u^2 
\Big) d\mathbf{x}} ,
\label{eq:operatorG}
\end{equation*}
the volumetric expression of the shape derivative of \eqref{eq:kohn-vogelius} reads as
\begin{equation}
\langle dJ(\Omega),\boldsymbol\theta \rangle = \langle G(\Omega,u_{\Omega,N}) -  
G(\Omega,u_{\Omega,D}) , \boldsymbol\theta \rangle.
\label{eq:volumetricKV}
\end{equation}
The interested reader may refer to \cite{PANTZ-sauts} for more details on the differentiation of the Kohn-Vogelius functional with respect to the shape and its application to the identification of discontinuities of the conductivity parameter.

\section{Conforming finite element approximation}
\label{ref:ConformingFE}

In this section, we introduce a discretization of the EIT problem based on conforming finite element functions. 
Let $\{\mathcal{T}_h\}_{h>0}$ be a family of triangulations of the domain $\mathcal{D}$ with no hanging nodes. 
Having in mind that $d=2$, we consider a mesh such that each element $T \in \mathcal{T}_h$ is a triangle and for each couple $T, T' \in \mathcal{T}_h$ such that $T \neq T'$, the intersection of the two elements is either an empty set or a vertex or an edge.
An edge $e$ is said to be an interior edge of the triangulation $\mathcal{T}_h$ if there exist two elements $T^-(e), \ T^+(e) \in \mathcal{T}_h$ such that $e = T^-(e) \cap T^+(e)$, whereas is a boundary edge if there exists $T(e) \in \mathcal{T}_h$ such that $e = T(e) \cap \partial\mathcal{D}$. 
In the former case, the unit normal vector to $e$ is denoted by $\mathbf{n}_e$ and goes from $T^-(e)$ towards $T^+(e)$. In the latter one, $\mathbf{n}$ is the classical outward normal to $\partial\mathcal{D}$.
The set of the internal edges is noted as $\mathcal{E}_h^\mathcal{I}$, the boundary edges are collected into $\mathcal{E}_h^\mathcal{B}$ and we set $\mathcal{E}_h \coloneqq \mathcal{E}_h^\mathcal{I} \cup \mathcal{E}_h^\mathcal{B}$. \\
The state and adjoint problems are solved using the following Lagrangian finite element space
$$
V_\Omega^{h,\kappa} \coloneqq \{ u^h \in \mathcal{C}^0(\overline{\mathcal{D}}) \ : \ u^h |_T \in \mathbb{P}^\kappa(T) \ \forall T \in \mathcal{T}_h \}
$$
where $\mathbb{P}^\kappa(T)$ is the set of polynomials of degree at most $\kappa$ on an element $T$, being $\kappa=\ell$ and $\kappa=m$ respectively for the state and adjoint equations.
The procedure to construct the equilibrated fluxes is performed via the solution of local subproblems defined on patches of elements using mixed finite element formulations.
A key aspect of this approach - which will be precisely detailed - is the choice of the degree of the approximating functions for both the solution of the problems and the equilibrated fluxes.

\subsection{The state problems}

Let $a_\Omega(\cdot,\cdot)$ be the bilinear form associated with problems \eqref{eq:statePB} and $F_{\Omega,i}(\cdot), \ i=N,D$ the linear forms respectively for the Neumann and the Dirichlet problem:
\begin{gather}
a_\Omega(u_{\Omega,i},\delta u) = \int_\mathcal{D}{\Big(k_\Omega \nabla u_{\Omega,i} \cdot 
\nabla \delta u + u_{\Omega,i} \delta u \Big) d\mathbf{x}} ,
\label{eq:a} \\
F_{\Omega,N}(\delta u) = \int_{\partial\mathcal{D}}{g \delta u \ ds} \quad 
\text{and} \quad F_{\Omega,D}(\delta u) = 0.
\label{eq:F}
\end{gather}
We consider $u_{\Omega,N},\ u_{\Omega,D} \in H^1(\mathcal{D})$ such that $u_{\Omega,D}=U_D \ \text{on} \ \partial \mathcal{D}$, solutions of the following Neumann and Dirichlet variational problems $\forall \delta u_N \in H^1(\mathcal{D})$ and $\forall \delta u_D \in H^1_0(\mathcal{D})$:
\begin{equation}
a_\Omega(u_{\Omega,i},\delta u_i) = F_{\Omega,i}(\delta u_i) 
\quad , \quad i=N,D.
\label{eq:stateEIT}
\end{equation}

We remark that within the framework of conforming finite element discretizations, the continuous and discrete bilinear (respectively linear) forms have the same expressions.
Hence, the corresponding discretized formulations of the state problems \eqref{eq:stateEIT} may be derived by replacing the analytical solutions $u_{\Omega,N}$ and $u_{\Omega,D}$ with their approximations $u_{\Omega,N}^h$ and $u_{\Omega,D}^h$ which belong to the space $V_\Omega^{h,\ell}$ of Lagrangian finite element functions of degree $\ell$. 
In a similar fashion, $\boldsymbol\theta^h$ is the solution of equation \eqref{eq:variationalP} computed using a vector-valued Lagrangian finite element space and substituting the expression of the 
discrete shape derivative \eqref{eq:discreteShapeGrad} to its analytical counterpart \eqref{eq:shapeDerDiff}. 
For both the state problems and the computation of the descent direction, we consider a low-order approximation respectively using $\mathbb{P}^1$ and $\mathbb{P}^1 \times \mathbb{P}^1$ Lagrangian finite element functions.

\subsection{The adjoint problems}

Let $r_{\Omega,N}$ and $r_{\Omega,D}$ be the solutions of the adjoint problems \eqref{eq:adjointDJ} introduced to evaluate the contributions of the Neumann and Dirichlet state problems to the error in the quantity of interest: we seek $r_{\Omega,N} \in H^1(\mathcal{D})$ and $r_{\Omega,D} \in H^1_0(\mathcal{D})$ such that respectively $\forall \delta r_N \in H^1(\mathcal{D})$ and $\forall \delta r_D \in H^1_0(\mathcal{D})$
\begin{equation}
a_\Omega(\delta r_i,r_{\Omega,i}) = H_{\Omega,i}(\delta r_i) 
\quad , \quad i=N,D
\label{eq:adjointEIT}
\end{equation}
where for $i=N,D$ the linear forms $H_{\Omega,i}(\delta r_i)$ read as
\begin{equation*}
\begin{aligned}
H_{\Omega,i}(\delta r) & \coloneqq \frac{\partial G}{\partial u}(\Omega,u_{\Omega,i}^h)[ \boldsymbol\theta^h,\delta r] \\
& = \int_\mathcal{D}{ \Big(k_\Omega \mathbf{M}(\boldsymbol\theta^h) \nabla u_{\Omega,i}^h \cdot \nabla \delta r - \nabla \cdot \boldsymbol\theta^h \ u_{\Omega,i}^h \delta r \Big) d\mathbf{x}} .
\end{aligned}
\label{eq:Fadj}
\end{equation*}
As for the state problems, the discretized solutions $r_{\Omega,N}^h$ and $r_{\Omega,D}^h$ are obtained solving the adjoint equations \eqref{eq:adjointEIT} within an appropriate space of Lagrangian finite element functions, that is the space $V_\Omega^{h,m}$ of degree $m$.
According to the requirement of higher-order methods to solve the adjoint problems, we consider a $\mathbb{P}^2$ Lagrangian finite element space for the discretization of \eqref{eq:adjointEIT}.

\subsection{Estimate of the error in the shape gradient via the equilibrated fluxes}

Starting from the framework described in section \ref{ref:GOAL}, we construct a goal-oriented estimator of the error in the shape gradient by evaluating the quantities $\widetilde{E}^h_u$ and $\widetilde{E}^h_p$ in \eqref{eq:Eu}-\eqref{eq:Ep}.
First of all, we observe that owing to the Kohn-Vogelius problem being self-adjoint, this reduces to estimating the quantity $\widetilde{E}^h_u$ for the Neumann and the Dirichlet cases. 
By recalling the expression \eqref{eq:volumetricKV} of the shape derivative for the Kohn-Vogelius functional, we may rewrite the error in the shape gradient as follows:
\begin{equation}
\begin{aligned}
E^h & = \langle dJ(\Omega) - d_hJ(\Omega),\boldsymbol\theta^h \rangle \\
& = \langle G(\Omega,u_{\Omega,N}) - G(\Omega,u_{\Omega,N}^h) , \boldsymbol\theta^h \rangle 
- \langle G(\Omega,u_{\Omega,D}) - G(\Omega,u_{\Omega,D}^h), \boldsymbol\theta^h \rangle \\
& \simeq H_{\Omega,N}(u_{\Omega,N} - u_{\Omega,N}^h) - H_{\Omega,D}(u_{\Omega,D} - u_{\Omega,D}^h) 
\eqqcolon \widetilde{E}^h_{u,_N} - \widetilde{E}^h_{u,_D} .
\end{aligned}
\label{eq:ERRvolumetricKV}
\end{equation}
Before constructing the components of the estimator of the error in the shape gradient in \eqref{eq:ERRvolumetricKV}, we recall the notion of equilibrated fluxes.
In order to do so, we introduce the space of vector-valued functions $\Hdiv = \{ \boldsymbol\tau \in L^2(\mathcal{D}; \mathbb{R}^d) \ : \ \nabla \cdot \boldsymbol\tau \in L^2(\mathcal{D}) \}$ and the discrete space $W_\Omega^{h,\kappa}$ of the functions that restricted to a single element of the triangulation are Raviart-Thomas finite element functions of degree $\kappa$:
$$
W_\Omega^{h,\kappa} \coloneqq \{ \boldsymbol\tau^h \in \Hdiv \ : \ \boldsymbol\tau^h |_T \in [\mathbb{P}^\kappa(T)]^d + \mathbf{x} \mathbb{P}^\kappa(T) \ \forall T \in \mathcal{T}_h \} .
$$
\begin{rmrk}
A function $\boldsymbol\tau^h \in W_\Omega^{h,\kappa}$ is such that $\nabla \cdot \boldsymbol\tau^h \in \mathbb{P}^\kappa(T) \ \forall T \in \mathcal{T}_h$ , $\boldsymbol\tau^h \cdot \mathbf{n}_e \in \mathbb{P}^\kappa(e) \ \forall e \subset \partial T \ , \ \forall T \in \mathcal{T}_h$ and its normal trace is continuous across all edges $e \subset \partial T$ (cf. \cite{BoffiBrezziFortin}).
\end{rmrk}

\subsubsection{Equilibrated fluxes for the state equations}

The discretized solutions $u_{\Omega,i}^h$'s of the state problems are usually such that $- k_\Omega \nabla u_{\Omega,i}^h$ $\notin \Hdiv$ or $\nabla \cdot (- k_\Omega \nabla u_{\Omega,i}^h) + u_{\Omega,i}^h \neq 0$. 
On the contrary, the weak solutions $u_{\Omega,i}$'s - and their fluxes $\boldsymbol\sigma_{\Omega,i} \coloneqq - k_\Omega \nabla u_{\Omega,i}$ - fulfill $\boldsymbol\sigma_{\Omega,i} \in \Hdiv$ and $\nabla \cdot \boldsymbol\sigma_{\Omega,i} + u_{\Omega,i} = 0$. 
In order to retrieve the aforementioned properties, we construct the discrete quantities known as equilibrated fluxes (cf. \cite{MR2373174}):
\begin{defin}
Let $u_{\Omega,i}^h \in V_\Omega^{h,\ell}$ be the solution of a state problem \eqref{eq:stateEIT} computed using Lagrangian finite element functions of degree $\ell$. 
Let $\kappa = \max \{ 0 , \ell-1 \}$, we define $\pi_Z^\kappa : L^2(\mathcal{D}) \rightarrow Z_\Omega^{h,\kappa}$ the $L^2$-orthogonal projection operator onto the space $Z_\Omega^{h,\kappa}$ of the piecewise discontinuous finite element functions of degree $\kappa$.
A function $\boldsymbol\sigma_{\Omega,i}^h \in W_\Omega^{h,\kappa}$ is said to be an equilibrated flux for the problem \eqref{eq:stateEIT} if
\begin{equation}
\nabla \cdot \boldsymbol\sigma_{\Omega,i}^h + \pi_Z^\kappa u_{\Omega,i}^h = 0 .
\label{eq:eqFLuxState} 
\end{equation}
\label{def:fluxState}
\end{defin}
\noindent Under the previously introduced assumptions on the degree of the discretization spaces, we get that $\ell=1$ and $\kappa=0$, that is the equilibrated flux is sought in the lowest-order Raviart-Thomas space $RT_0$ and the projection operator returns $\mathbb{P}^0$ piecewise constant functions.

To practically reconstruct the equilibrated fluxes $\boldsymbol\sigma_{\Omega,i}^h$'s, we follow the approach proposed by Ern and Vohral\'ik in \cite{doi:10.1137/130950100} which is based on the work by Braess and Sch\"{o}berl \cite{MR2373174}. 
In particular, we consider a procedure that starting from the finite element functions $u_{\Omega,i}^h \ , \ i=N,D$ constructs the equilibrated fluxes locally on subpatches of elements. 
Thus, for each vertex $x_v \ , \ v=1,\ldots,N_v$ of the elements in the computational mesh we introduce a linear shape function $\psi_v$ such that $\psi_v(x_w) = \delta_{vw}$, $\delta$ being the 
classical Kronecker delta. 
The support of $\psi_v$ is the subpatch centered in $x_v$ and is denoted by $\omega_v$. We remark that the family of functions $\psi_v$'s fulfills the condition known as partition of the unity, that is 
$$
\sum_{v=1}^{N_v} \psi_v = 1 .
$$
In order to retrieve a precise approximation of the fluxes, we consider a dual mixed finite element formulation of the aforementioned local problems.
First, let us denote by $W_{\omega_v}^{h,\kappa}$ (respectively $Z_{\omega_v}^{h,\kappa}$) the restriction to $\omega_v$ of the previously defined space $W_\Omega^{h,\kappa}$ (respectively $Z_\Omega^{h,\kappa}$).
Moreover, we introduce the following finite element spaces:
\begin{gather*}
W_{v,0}^{h,\kappa} \coloneqq \{ \boldsymbol\tau^h \in W_{\omega_v}^{h,\kappa} \ : \  \boldsymbol\tau^h \cdot \mathbf{n}_e = 0 \ \text{on} \ e \in \partial\omega_v \} ,
\label{eq:Wloc-all} \\
W_{v,1}^{h,\kappa} \coloneqq \{ \boldsymbol\tau^h \in W_{\omega_v}^{h,\kappa} \ : \  \boldsymbol\tau^h \cdot \mathbf{n}_e = 0 \ \text{on} \ e \in \partial\omega_v \setminus \mathcal{E}_h^\mathcal{B} \} .
\label{eq:Wloc-part} 
\end{gather*}
For each vertex $x_v \ , \ v=1,\ldots,N_v$ and for $i=N,D$, we prescribe $(\boldsymbol\sigma_{i,v}^h,t_{i,v}^h) \in W_{i,v}^{h,\kappa} \times Z_{\omega_v}^{h,\kappa}$ such that $\forall (\boldsymbol\delta \boldsymbol\sigma_i^h,\delta t_i^h) \in W_v^{h,\kappa} \times Z_{\omega_v}^{h,\kappa}$
\begin{equation}
\begin{aligned} 
& \int_{\omega_v}{\nabla \cdot \boldsymbol\sigma_{i,v}^h \delta t_i^h \ d\mathbf{x}} + \int_{\omega_v}{t_{i,v}^h \delta t_i^h \ d\mathbf{x}} = - \int_{\omega_v}{\Big(k_\Omega \nabla u_{\Omega,i}^h \cdot \nabla \psi_v + u_{\Omega,i}^h \psi_v \Big) \delta t_i^h \ d\mathbf{x}} , \\
& \int_{\omega_v}{\boldsymbol\sigma_{i,v}^h \cdot \boldsymbol\delta \boldsymbol\sigma_i^h \ d\mathbf{x}} - \int_{\omega_v}{k_\Omega t_{i,v}^h \nabla \cdot \boldsymbol\delta \boldsymbol\sigma_i^h \ d\mathbf{x}} = - \int_{\omega_v}{k_\Omega \psi_v \nabla u_{\Omega,i}^h \cdot \boldsymbol\delta \boldsymbol\sigma_i^h \ d\mathbf{x}} .
\end{aligned}
\label{eq:mixedPBstate} 
\end{equation}
The spaces in which the trial and the test functions are sought are detailed below.
It is important to highlight the different nature of problem \eqref{eq:mixedPBstate} when the patch $\omega_v$ is centered on a vertex belonging to the interior of $\mathcal{D}$ or to its boundary $\partial\mathcal{D}$.
As Braess and Sch\"{o}berl remark in \cite{MR2373174}, some caution has to be used when dealing with the corresponding boundary conditions: in particular, a flux-free condition is imposed on the whole boundary $\partial\omega_v$ of the patch for interior vertices, whereas it is limited to the edges in $\partial\omega_v \setminus \mathcal{E}_h^\mathcal{B}$ for points which belong to the external boundary of the global domain. \\
To construct the equilibrated fluxes for the Neumann state problem on $\omega_v$ centered in a vertex $x_v \in \partial\mathcal{D}$, equation \eqref{eq:mixedPBstate} is solved using the spaces
 \begin{gather}
\begin{aligned}
W_{N,v}^{h,\kappa} \coloneqq \{ \boldsymbol\tau^h \in W_{\omega_v}^{h,\kappa} \ : & \  \boldsymbol\tau^h \cdot \mathbf{n}_e = 0 \ \text{on} \ e \in \partial\omega_v \setminus \mathcal{E}_h^\mathcal{B} \\
& \text{and} \ \boldsymbol\tau^h \cdot \mathbf{n}_e = \pi_{W \cdot \mathbf{n}}^\kappa(\psi_v g) \ \text{on} \ e \in \partial\omega_v \cap \mathcal{E}_h^\mathcal{B} \} ,
\end{aligned}
\label{eq:trialExtNeu} \\
W_v^{h,\kappa} = W_{v,0}^{h,\kappa} .
\label{eq:testExtNeu}
\end{gather}
When considering the Dirichlet state problem on $\omega_v$ centered in $x_v \in \partial\mathcal{D}$, the trial and test spaces read as follows:
\begin{equation}
W_{D,v}^{h,\kappa} = W_v^{h,\kappa} = W_{v,1}^{h,\kappa} .
\label{eq:trial-testExtDir}
\end{equation}
Eventually, for the vertices $x_v \in int(\mathcal{D})$, we solve \eqref{eq:mixedPBstate} using the spaces
\begin{equation}
W_{N,v}^{h,\kappa} = W_{D,v}^{h,\kappa} = W_v^{h,\kappa} = W_{v,0}^{h,\kappa} .
\label{eq:trial-testInt}
\end{equation}
In \eqref{eq:trialExtNeu}, $\pi_{W \cdot \mathbf{n}}^\kappa$ stands for the $L^2$-orthogonal projection operator from $L^2(\partial\mathcal{D})$ to the space $W_\Omega^{h,\kappa} \cdot \mathbf{n}$
of polynomial functions of degree at most $\kappa$ on the external boundary.
For additional details on the procedure to construct the equilibrated fluxes and on the properties of the resulting \emph{a posteriori} error estimators, we refer to \cite{doi:10.1137/130950100}.

We now extend all the $\boldsymbol\sigma_{i,v}^h$'s by zero in $\mathcal{D} \setminus \omega_v$.
By combining the above information arising from all the subpatches, we may retrieve the global equilibrated fluxes for the state problems:
\begin{equation*}
\boldsymbol\sigma_{\Omega,i}^h = \sum_{v=1}^{N_v} \boldsymbol\sigma_{i,v}^h \quad , \quad i=N,D .
\label{eq:sumFluxState}
\end{equation*}
\begin{lemma}
For the case of the Neumann state problem, it holds 
\begin{equation*}
\boldsymbol\sigma_{\Omega,N}^h \cdot \mathbf{n} = \pi_{W \cdot \mathbf{n}}^\kappa(g) \quad \text{on} \quad \partial\mathcal{D} .
\label{eq:fluxProj}
\end{equation*}
\label{theo:lemmaFlux}
\end{lemma}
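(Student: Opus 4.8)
The plan is to reduce the global identity on $\partial\mathcal{D}$ to a local statement on each boundary edge and then to conclude via the partition of unity. I would fix an arbitrary boundary edge $e \in \mathcal{E}_h^\mathcal{B}$, write $e = T(e) \cap \partial\mathcal{D}$, and denote by $x_{v_1}, x_{v_2}$ its two endpoints and by $x_{v_3}$ the remaining vertex of $T(e)$. Since each local contribution $\boldsymbol\sigma_{i,v}^h$ has been extended by zero outside its patch $\omega_v$, and since the boundary edge $e$ is an edge of the single element $T(e)$, one has $e \subset \partial\omega_v$ exactly when $x_v$ is a vertex of $T(e)$; for every other vertex $v$ the function $\boldsymbol\sigma_{N,v}^h$ vanishes on $T(e)$ and hence its normal trace vanishes on $e$. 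Consequently the reconstructed flux obtained by summing over all subpatches satisfies $\boldsymbol\sigma_{\Omega,N}^h \cdot \mathbf{n}|_e = \sum_{j=1}^{3} \boldsymbol\sigma_{N,v_j}^h \cdot \mathbf{n}_e|_e$, where on the boundary edge $e$ the vector $\mathbf{n}_e$ coincides with the outward normal $\mathbf{n}$ to $\partial\mathcal{D}$.

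Next I would evaluate the three surviving normal traces using the prescribed trial spaces \eqref{eq:trialExtNeu}--\eqref{eq:trial-testInt}. For the vertex $v_3$ two cases arise. If $x_{v_3} \in int(\mathcal{D})$, then $\boldsymbol\sigma_{N,v_3}^h \in W_{v,0}^{h,\kappa}$, whose defining flux-free condition forces the normal trace to vanish on the whole of $\partial\omega_{v_3}$, in particular on $e$. If instead $x_{v_3} \in \partial\mathcal{D}$, then $\boldsymbol\sigma_{N,v_3}^h \in W_{N,v_3}^{h,\kappa}$, so on the boundary edge $e$ its normal trace equals $\pi_{W \cdot \mathbf{n}}^\kappa(\psi_{v_3} g)|_e$; but $\psi_{v_3}$ is the hat function of a vertex not belonging to $e$, hence it is affine on $e$ and vanishes at both endpoints, so $\psi_{v_3}|_e \equiv 0$ and this term vanishes as well. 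For $v_1$ and $v_2$, which lie on $\partial\mathcal{D}$, the fluxes belong to $W_{N,v_1}^{h,\kappa}$ and $W_{N,v_2}^{h,\kappa}$, so that $\boldsymbol\sigma_{N,v_j}^h \cdot \mathbf{n}_e|_e = \pi_{W \cdot \mathbf{n}}^\kappa(\psi_{v_j} g)|_e$ for $j=1,2$.

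Finally I would combine these identities with the linearity of the $L^2$-orthogonal projection onto $\mathbb{P}^\kappa(e)$ and with the partition of unity. Summing the three contributions yields $\boldsymbol\sigma_{\Omega,N}^h \cdot \mathbf{n}|_e = \pi_{W \cdot \mathbf{n}}^\kappa((\psi_{v_1}+\psi_{v_2})g)|_e$. Since every hat function other than $\psi_{v_1}$ and $\psi_{v_2}$ vanishes on $e$, restricting the partition of unity $\sum_{v=1}^{N_v}\psi_v \equiv 1$ to $e$ gives $\psi_{v_1}+\psi_{v_2}=1$ on $e$, whence $\boldsymbol\sigma_{\Omega,N}^h \cdot \mathbf{n}|_e = \pi_{W \cdot \mathbf{n}}^\kappa(g)|_e$. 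As $e \in \mathcal{E}_h^\mathcal{B}$ was arbitrary and $\pi_{W \cdot \mathbf{n}}^\kappa$ acts edge by edge, the claimed identity follows on all of $\partial\mathcal{D}$.

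The main obstacle is combinatorial rather than analytical: one has to identify precisely which local problems ``see'' a given boundary edge and, in particular, handle the borderline configuration in which the third vertex $x_{v_3}$ of the boundary triangle $T(e)$ also lies on $\partial\mathcal{D}$ — a case disposed of only because $\psi_{v_3}|_e \equiv 0$. One should also verify that the orientation of $\mathbf{n}_e$ used to impose the constraint in \eqref{eq:trialExtNeu} agrees with the outward normal $\mathbf{n}$, so that the per-patch normal traces add up with the correct sign.
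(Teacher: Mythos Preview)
Your proof is correct and follows essentially the same route as the paper: localize to a single boundary edge, read off the normal traces of the contributing patch fluxes from the trial-space constraints, and conclude via the partition of unity $\sum_v \psi_v = 1$ restricted to that edge. The paper's version is terser---it packages the combinatorics into an indicator $\chi_v^e$ and tests against $\delta u^h \in \mathbb{P}^\kappa(e)$ rather than invoking the projection directly---but you make the case analysis for the third vertex $x_{v_3}$ explicit, which the paper leaves implicit.
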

\begin{proof}
Let $\chi_v^e$ be equal to $1$ if a given edge $e \in \mathcal{E}_h^\mathcal{B}$ belongs to the subpatch $\omega_v$ centered in $x_v$ and $0$ otherwise. Hence, 
$$
\boldsymbol\sigma_{\Omega,N}^h|_e = \sum_{v=1}^{N_v}{\chi_v^e \boldsymbol\sigma_{N,v}^h} .
$$
Let $\delta u^h \in (W_\Omega^{h,\kappa} \cdot \mathbf{n})|_e$ be a polynomial function of degree at most $\kappa$ on the edge $e \in \mathcal{E}_h^\mathcal{B}$.
Owing to the condition on the normal trace $\boldsymbol\sigma_{N,v}^h \cdot \mathbf{n}_e$ in \eqref{eq:trialExtNeu}, we get
$$
\langle \boldsymbol\sigma_{\Omega,N}^h \cdot \mathbf{n}_e, \delta u^h \rangle_e = \sum_{v=1}^{N_v}{\chi_v^e \langle \boldsymbol\sigma_{N,v}^h \cdot \mathbf{n}_e, \delta u^h \rangle_e} 
= \sum_{v=1}^{N_v}{\chi_v^e \langle \psi_v g, \delta u^h \rangle_e} = \langle g, \delta u^h \rangle_e ,
$$
where the last equality follows from the partition of the unity property fulfilled by the functions $\psi_v$'s.
The result is inferred by observing that the previous chain of equality holds $\forall \delta u^h \in (W_\Omega^{h,\kappa} \cdot \mathbf{n})|_e \ , \ \forall e \in \mathcal{E}_h^\mathcal{B}$.
\end{proof}

\subsubsection{Equilibrated fluxes for the adjoint equations}

Following the same approach discussed above for the state problems, we define the equilibrated fluxes for the adjoint problems:
\begin{defin}
Let $r_{\Omega,i}^h \in V_\Omega^{h,m}$ be the solution of an adjoint problem \eqref{eq:adjointEIT} computed using Lagrangian finite element functions of degree $m$. Let $\kappa = \max \{ 0 , m-1 \}$ 
and $\pi_Z^\kappa : L^2(\mathcal{D}) \rightarrow Z_\Omega^{h,\kappa}$ the $L^2$-orthogonal projection operator onto the space $Z_\Omega^{h,\kappa}$ defined in the previous section.
A function $\boldsymbol\xi_{\Omega,i}^h \in W_\Omega^{h,\kappa}$ is said to be an equilibrated flux for the problem \eqref{eq:adjointEIT} if
\begin{equation}
\nabla \cdot \boldsymbol\xi_{\Omega,i}^h + \pi_Z^\kappa r_{\Omega,i}^h = - \pi_Z^\kappa \Big( \nabla \cdot (k_\Omega \mathbf{M}(\boldsymbol\theta^h) \nabla u_{\Omega,i}^h) + \nabla \cdot \boldsymbol\theta^h \ u_{\Omega,i}^h \Big) .
\label{eq:eqFLuxAdjoint} 
\end{equation}
\label{def:fluxAdjoint}
\end{defin}
\noindent Having in mind that the adjoint equations are solved using $\mathbb{P}^2$ Lagrangian finite element functions - that is $m=2$ - it follows that the equilibrated fluxes $\boldsymbol\xi_{\Omega,i}^h$'s are constructed via $RT_1$ Raviart-Thomas functions of degree $1$ and the operator $\pi_Z^k$ projects functions from $L^2(\mathcal{D})$ to the discrete space of piecewise discontinuous finite elements of degree $1$.

The computation of the equilibrated fluxes for the adjoint problems is again performed via the solution of a mixed finite element problem. 
We consider the same discrete spaces introduced in definitions \eqref{eq:testExtNeu} to \eqref{eq:trial-testInt}, whereas the space $W_{N,v}^{h,\kappa}$ associated with the Neumann adjoint problem featuring a patch centered on a boundary node is $W_{v,0}^{h,\kappa}$.
Thus, for each subpatch $\omega_v \ , \ v=1,\ldots,N_v$ and for $i=N,D$, we seek $(\boldsymbol\xi_{i,v}^h,q_{i,v}^h) \in W_{i,v}^{h,\kappa} \times Z_{\omega_v}^{h,\kappa}$ such that $\forall (\boldsymbol\delta \boldsymbol\xi_i^h,\delta q_i^h) \in W_v^{h,\kappa} \times Z_{\omega_v}^{h,\kappa}$
\begin{equation}
\begin{aligned}
& \begin{aligned}
\int_{\omega_v}\nabla \cdot \boldsymbol\xi_{i,v}^h \delta & q_i \ d\mathbf{x} + \int_{\omega_v}{q_{i,v}^h \delta q_i \ d\mathbf{x}} = \int_{\omega_v}{k_\Omega \mathbf{M}(\boldsymbol\theta^h) \nabla u_{\Omega,i}^h \cdot \nabla \psi_v \delta q_i \ d\mathbf{x}} \\
& - \int_{\omega_v}{\nabla \cdot \boldsymbol\theta^h u_{\Omega,i}^h \psi_v \delta q_i \ d\mathbf{x}} - \int_{\omega_v}{\Big(k_\Omega \nabla r_{\Omega,i}^h \cdot \nabla \psi_v + r_{\Omega,i}^h \psi_v \Big) \delta q_i \ d\mathbf{x}} ,
  \end{aligned} \\
& \begin{aligned}
 \int_{\omega_v}{\boldsymbol\xi_{i,v}^h \cdot \boldsymbol\delta \boldsymbol\xi_i \ d\mathbf{x}} - \int_{\omega_v}{k_\Omega q_{i,v}^h \nabla \cdot \boldsymbol\delta \boldsymbol\xi_i \ d\mathbf{x}} = \int_{\omega_v}k_\Omega \psi_v & \mathbf{M}(\boldsymbol\theta^h) \nabla u_{\Omega,i}^h \cdot \boldsymbol\delta \boldsymbol\xi_i \ d\mathbf{x} \\
& - \int_{\omega_v}{k_\Omega \psi_v \nabla r_{\Omega,i}^h \cdot \boldsymbol\delta \boldsymbol\xi_i \ d\mathbf{x}} .
   \end{aligned}
\end{aligned}
\label{eq:mixedPBadjoint} 
\end{equation}
The corresponding equilibrated fluxes $\boldsymbol\xi_{\Omega,i}^h$'s are obtained by extending the functions $\boldsymbol\xi_{i,v}^h$'s by zero in $\mathcal{D} \setminus \omega_v$ and by combining the previously computed local information:
$$
\boldsymbol\xi_{\Omega,i}^h = \sum_{v=1}^{N_v} \boldsymbol\xi_{i,v}^h \quad , \quad i=N,D .
$$
\begin{rmrk}
A key aspect of the discussed procedure is represented by the local nature of the problems to be solved for the construction of the equilibrated fluxes. 
The advantage of this approach is twofold. On the one hand, solving the local problems \eqref{eq:mixedPBstate}-\eqref{eq:mixedPBadjoint} is computationally inexpensive owing to the small size of the subpatches. On the other hand, every problem set on a subpatch is independent from the remaining ones thus it is straightforward to implement a version of the procedure that can efficiently exploit modern parallel architectures.
\end{rmrk}

\subsubsection{Goal-oriented equilibrated fluxes error estimator}
\label{ref:goalFE}

As previously stated, the construction of the error estimator for the shape gradient for the case of the electrical impedance tomography reduces to the evaluation of \eqref{eq:Eu} for the Neumann and Dirichlet problems. 
For this purpose, we introduce respectively the quantities $\widetilde{E}^h_{u,_N}$ and $\widetilde{E}^h_{u,_D}$ and two parameters $\zeta_i$'s such that $\zeta_N \coloneqq 1$ and $\zeta_D \coloneqq 0$.
By exploiting the formulation of the bilinear and linear forms \eqref{eq:a}-\eqref{eq:F} and adding the expression of the equilibrated fluxes \eqref{eq:eqFLuxState}, $\widetilde{E}^h_{u,_i}$ reads as:
\begin{equation*}
\begin{aligned}
\widetilde{E}^h_{u,_i} \coloneqq F_{\Omega,i}(r_{\Omega,i}) - a_\Omega(& u_{\Omega,i}^h,r_{\Omega,i}) = \ \zeta_i \int_{\partial\mathcal{D}}{g r_{\Omega,i} \ ds} 
  - \int_\mathcal{D}{k_\Omega \nabla u_{\Omega,i}^h \cdot \nabla r_{\Omega,i} d\mathbf{x}} \\
& \hspace{12pt} - \int_\mathcal{D}{u_{\Omega,i}^h r_{\Omega,i} d\mathbf{x}} 
  + \int_\mathcal{D}{\Big( \nabla \cdot \boldsymbol\sigma_{\Omega,i}^h + \pi_Z^\kappa u_{\Omega,i}^h \Big) r_{\Omega,i} \ d\mathbf{x}} .
\end{aligned}
\label{eq:GOAL1step}
\end{equation*}
Integrating by parts the last integral and owing to lemma \ref{theo:lemmaFlux} and to $r_{\Omega,D}=0 \ \text{on} \ \partial\mathcal{D}$, we obtain
\begin{equation*}
\begin{aligned}
\widetilde{E}^h_{u,_i} = \ \zeta_i & \int_{\partial\mathcal{D}}{\Big( g - \pi_{W \cdot \mathbf{n}}^\kappa(g) \Big) r_{\Omega,i} \ ds} 
+ \int_\mathcal{D}{\Big( \pi_Z^\kappa u_{\Omega,i}^h - u_{\Omega,i}^h \Big) r_{\Omega,i} \ d\mathbf{x}} \\
& - \int_\mathcal{D}{\Big( \boldsymbol\sigma_{\Omega,i}^h + k_\Omega \nabla u_{\Omega,i}^h \Big) \cdot \nabla r_{\Omega,i} \ d\mathbf{x}} .
\end{aligned}
\label{eq:GOAL2step}
\end{equation*}
By adding and subtracting the corresponding terms featuring the finite element counterparts $r_{\Omega,i}^h$'s of the adjoint solutions and owing to definition \ref{def:fluxAdjoint} of the equilibrated fluxes 
$\boldsymbol\xi_{\Omega,i}^h$'s, we are finally able to derive the expression of the errors $\widetilde{E}^h_{u,_i}$'s:
\begin{equation}
\begin{aligned} 
\widetilde{E}^h_{u,_i} = & \ \zeta_i \int_{\partial\mathcal{D}}{\Big( g - \pi_{W \cdot \mathbf{n}}^\kappa(g) \Big) r_{\Omega,i}^h \ ds} 
+ \zeta_i \int_{\partial\mathcal{D}}{\Big( g - \pi_{W \cdot \mathbf{n}}^\kappa(g) \Big)(r_{\Omega,i} - r_{\Omega,i}^h) ds} \\
& + \int_\mathcal{D}{\Big( \pi_Z^\kappa u_{\Omega,i}^h - u_{\Omega,i}^h \Big) r_{\Omega,i}^h \ d\mathbf{x}} 
+ \int_\mathcal{D}{\Big( \pi_Z^\kappa u_{\Omega,i}^h - u_{\Omega,i}^h \Big)(r_{\Omega,i} - r_{\Omega,i}^h) d\mathbf{x}} \\
& + \int_\mathcal{D}{\Big( \boldsymbol\sigma_{\Omega,i}^h + k_\Omega \nabla u_{\Omega,i}^h \Big) \cdot k_\Omega^{-1} \boldsymbol\xi_{\Omega,i}^h \ d\mathbf{x}} \\
& - \int_\mathcal{D}{\Big( \boldsymbol\sigma_{\Omega,i}^h + k_\Omega \nabla u_{\Omega,i}^h \Big) \cdot \Big(\nabla r_{\Omega,i} + k_\Omega^{-1} \boldsymbol\xi_{\Omega,i}^h\Big) d\mathbf{x}} .
\end{aligned}
\label{eq:GOALconformingFE}
\end{equation}
We remark that in \eqref{eq:GOALconformingFE} both the exact and the discretized solutions of the adjoint problems appear. 
From a practical point of view, in order to fully compute the quantity \eqref{eq:GOALconformingFE} we substitute the exact solutions with their finite element counterparts $r_{\Omega,i}^h \in V_\Omega^{h,m}$ obtained by the high-order approximation of \eqref{eq:adjointEIT}. 
The corresponding approximated solutions are then replaced by the projection of the high-order approximations onto the space $V_\Omega^{h,\ell}$ of the low-order finite element functions used for the discretization of the state problems. Let $I_m^\ell: V_\Omega^{h,m} \rightarrow V_\Omega^{h,\ell}$ be the projection operator from the space of high-order approximations to the low-order one. 
The fully-computable version of the estimator of the quantity $\widetilde{E}^h_{u,_i}$ is obtained by substituting $r_{\Omega,i} - r_{\Omega,i}^h$ with $r_{\Omega,i}^h - I_m^\ell r_{\Omega,i}^h$ and $\nabla r_{\Omega,i}$ with $\nabla r_{\Omega,i}^h$ in \eqref{eq:GOALconformingFE}. 
By plugging the expressions of $\widetilde{E}^h_{u,_N}$ and $\widetilde{E}^h_{u,_D}$ arising from \eqref{eq:GOALconformingFE} into \eqref{eq:ERRvolumetricKV}, we obtain a computable expression of the error in the shape gradient and the bound $\overline{E}$ follows by considering its absolute value.
\begin{rmrk}
The goal-oriented error estimators constructed using the equilibrated fluxes approach are known to be asymptotically exact (cf. \cite{MR2899560}). 
Owing to the aforementioned asymptotic exactness, the term $\overline{E}$ tends to zero as the mesh size tends to zero.
This property plays a crucial role since it guarantees that the mesh adaptation routine performed to certify the descent direction (cf. algorithm \ref{scpt:shape-opt-adaptive} - step 5) eventually leads to the fulfillment of condition \eqref{eq:certified}.
\end{rmrk}

\section{Discontinuous Galerkin approximation}
\label{ref:DiscontinuousGalerkin}

In this section, we present an alternative strategy for the approximation of the EIT problem based on the symmetric weighted interior penalty discontinuous Galerkin (SWIP-dG) formulation. \\
Let us consider the notations introduced in section \ref{ref:ConformingFE} for the triangulation $\mathcal{T}_h$. The discontinuous Galerkin (dG) problems are solved within the space
$$
V_\Omega^{h,\kappa} \coloneqq \{ u^h \in L^2(\mathcal{D}) \ : \ u^h|_T \in \mathbb{P}^\kappa(T) \ \forall T \in \mathcal{T}_h  \} 
$$ 
of the discontinuous functions whose restrictions to a single element are polynomials of degree at most $\kappa$.
When dealing with dG formulations, discontinuous functions - as the ones of the aforementioned space $V_\Omega^{h,\kappa}$ - which are double-valued on $\mathcal{E}_h^\mathcal{I}$ and single-valued on $\mathcal{E}_h^\mathcal{B}$ have to be properly handled.
We define the jump of $u^h$ across the edge $e$ shared by the elements $T^\pm(e)$ as
\begin{equation*}
\llbracket u^h \rrbracket_e \coloneqq u^h|_{T^-(e)} - u^h|_{T^+(e)} .
\label{eq:jumpDG} 
\end{equation*}
In a similar fashion, the weighted average of $u^h$ on $e \in \mathcal{E}_h^\mathcal{I}$ reads as follows
\begin{equation}
\llbrace u^h \rrbrace_\alpha \coloneqq \alpha_{T^-(e),e} u^h|_{T^-(e)} + \alpha_{T^+(e),e} u^h|_{T^+(e)} .
\label{eq:weightedAvDG} 
\end{equation}
where the weights are non-negative quantities such that $\alpha_{T^-(e),e} + \alpha_{T^+(e),e} = 1$.
On boundary edges, we set $\llbracket u^h \rrbracket_e = u^h |_e$, $\alpha_{T^-(e),e} = 1$ and $\llbrace u^h \rrbrace_\alpha = u^h$.

Classical discontinuous Galerkin methods use arithmetic averages in \eqref{eq:weightedAvDG}, that is for all edges the weights are constant and equal $\alpha_{T^-(e),e} = \alpha_{T^+(e),e} = 1/2$.
As stated in the introduction, in recent years there has been a growing interest towards the so-called symmetric weighted interior penalty dG methods, especially when dealing with problems featuring inhomogeneous coefficients for the diffusion term (cf. \cite{doi:10.1137/050634736, MR2491426}). 
In particular, these methods rely on the definition of weights based on the information carried by the diffusion tensor. For the case of the electrical impedance tomography under analysis, this results in the following weights based on the different values of the electrical conductivity:
$$
\alpha_{T^-(e),e} \coloneqq \frac{k_\Omega|_{T^+(e)}}{k_\Omega|_{T^+(e)} + k_\Omega|_{T^-(e)}} 
\quad , \quad 
\alpha_{T^+(e),e} \coloneqq \frac{k_\Omega|_{T^-(e)}}{k_\Omega|_{T^+(e)} + k_\Omega|_{T^-(e)}} .
$$
It is well-known in the literature \cite{MR2882148} that the bilinear form associated with discontinuous Galerkin methods may suffer from lack of coercivity thus preventing the discrete problem from having a unique solution. 
A widely-spread workaround (cf. \cite{Shahbazi2005401}) is represented by the interior penalty approach that introduces a \emph{sufficiently large} penalization in order to retrieve the coercivity of the discrete bilinear form. 
Owing to the idea of exploiting the information carried by the diffusion tensor to construct the weights for the jump term, we define the stabilization parameter in a similar way \cite{MR2427189}:
$$
\gamma_e \coloneqq \beta_e \frac{k_\Omega|_{T^+(e)} k_\Omega|_{T^-(e)}}{k_\Omega|_{T^+(e)} + k_\Omega|_{T^-(e)}} 
$$
where $\beta_e > 0$ is a user-dependent parameter. 

As for the conforming finite element approximation described in the previous section, first we introduce the discrete state and adjoint problems and then we construct the equilibrated fluxes via a procedure relying solely on local quantities. 
As previously stated, a key aspect of this approach is represented by the choice of the degree of the approximating functions for both the solution of the problems and the equilibrated fluxes. 
The details of this choice will be discussed in the following subsections.
For the sake of readability, from now on we will omit the subscript $e$ associated with jumps, weights and averages if there is no risk of ambiguity.

\subsection{The state problems}

In order to appropriately handle the terms involving the effect of the boundary data in the estimator of the error in the shape gradient, the boundary conditions have to imposed using the same strategy in both the weak and the discrete formulation. 
Owing to the fact that the essential boundary conditions are classically verified in a weak sense in discontinuous Galerkin methods, we consider an alternative formulation of \eqref{eq:a}-\eqref{eq:F} to weakly 
impose the Dirichlet boundary condition on $\partial\mathcal{D}$.
Let $\zeta_N \coloneqq 1$ and $\zeta_D \coloneqq 0$. The bilinear forms $a_{\Omega,i}(\cdot,\cdot)$ and the linear ones $F_{\Omega,i}(\cdot)$ associated with problems \eqref{eq:statePB} coupled with the boundary conditions \eqref{eq:NeumannBC} and \eqref{eq:DirichletBC} respectively read as:
\begin{equation}
\begin{aligned}
a_{\Omega,i}(u_{\Omega,i},\delta u) =  &
\int_\mathcal{D}{\Big(k_\Omega \nabla u_{\Omega,i} \cdot \nabla \delta u + u_{\Omega,i} \delta u \Big) d\mathbf{x}} \\
& - (1-\zeta_i) \int_{\partial\mathcal{D}}{\Big( k_\Omega \nabla u_{\Omega,i} \cdot \mathbf{n} \delta u + u_{\Omega,i} k_\Omega \nabla \delta u \cdot \mathbf{n} \Big) ds} \\
& + (1-\zeta_i) \int_{\partial\mathcal{D}}{\gamma u_{\Omega,i} \delta u \ ds} ,
\end{aligned}
\label{eq:aWeak}
\end{equation}
\begin{equation}
F_{\Omega,N}(\delta u) = \int_{\partial\mathcal{D}}{g \delta u \ ds} 
\quad , \quad 
F_{\Omega,D}(\delta u) = \int_{\partial\mathcal{D}} { U_D (\gamma \delta u - k_\Omega \nabla \delta u \cdot \mathbf{n}) ds} .
\label{eq:Fweak} 
\end{equation}
We refer to appendix \ref{ref:essential} for the formal derivation of \eqref{eq:aWeak}-\eqref{eq:Fweak} in the Dirichlet case.
The variational formulation of the state equations \eqref{eq:statePB} reads as follows: for $i=N,D$ we seek $u_{\Omega,i} \in H^1(\mathcal{D})$ such that 
\begin{equation*}
a_{\Omega,i}(u_{\Omega,i},\delta u_i) = F_{\Omega,i}(\delta u_i) \quad \forall \delta u_i \in H^1(\mathcal{D}) .
\label{eq:stateNitsche}
\end{equation*}
The corresponding discrete bilinear and linear forms arising from the interior penalty discontinuous Galerkin method have the following expressions:
\begin{gather}
\begin{aligned}
  a_{\Omega,i}^h(u_{\Omega,i}^h,\delta u^h) =  &
  \sum_{T \in \mathcal{T}_h}{\int_T{\Big( k_{\Omega} \nabla u_{\Omega,i}^h \cdot \nabla \delta u^h + u_{\Omega,i}^h \delta u^h \Big) d\mathbf{x}}} \\
  - & \sum_{e \in \mathcal{E}_h^\mathcal{I}}{\int_{e}{\Big( \mathbf{n}_e \cdot \llbrace k_{\Omega} 
\nabla u_{\Omega,i}^h \rrbrace_\alpha \llbracket \delta u^h \rrbracket + 
\llbracket u_{\Omega,i}^h \rrbracket \mathbf{n}_e \cdot \llbrace k_{\Omega} \nabla 
\delta u^h \rrbrace_\alpha \Big) ds}} \\
  - & (1-\zeta_i) \sum_{e \in \mathcal{E}_h^\mathcal{B}}{  \int_{e}{\mathbf{n}_e \cdot \llbrace k_{\Omega} 
\nabla u_{\Omega,i}^h \rrbrace_\alpha \llbracket \delta u^h \rrbracket ds}} \\
  - & (1-\zeta_i) \sum_{e \in \mathcal{E}_h^\mathcal{B}}{  \int_{e}{\llbracket u_{\Omega,i}^h \rrbracket 
  \mathbf{n}_e \cdot \llbrace k_{\Omega} \nabla \delta u^h \rrbrace_\alpha ds}} \\
&  + \sum_{e \in \mathcal{E}_h^\mathcal{I}}{ \int_{e}{\frac{\gamma_e}{|e|} \llbracket 
u_{\Omega,i}^h \rrbracket \llbracket \delta u^h \rrbracket ds}} 
  + (1-\zeta_i) \sum_{e \in \mathcal{E}_h^\mathcal{B}}{ \int_{e}{\frac{\gamma_e}{|e|} \llbracket 
u_{\Omega,i}^h \rrbracket \llbracket \delta u^h \rrbracket ds}} ,
  \end{aligned}
\label{eq:aDG} \\
\begin{aligned}
F_{\Omega,N}^h(\delta u^h) & = \int_{\partial\mathcal{D}}{g \delta u^h \ ds} , \\
F_{\Omega,D}^h(\delta u^h) = \sum_{e \in \mathcal{E}_h^B} & { \int_{e}{U_D \Big(\frac{\gamma_e}{|e|} \delta u^h - k_\Omega \nabla \delta u^h \cdot \mathbf{n}_e \Big) ds}} .
\end{aligned}
\notag
\end{gather}
Thus, according to the SWIP-dG problem we seek $u_{\Omega,N}^h, u_{\Omega,D}^h \in V_\Omega^{h,\ell}$ such that 
\begin{equation}
a_{\Omega,i}^h(u_{\Omega,i}^h,\delta u_i^h) = F_{\Omega,i}^h(\delta u_i^h) \quad \forall \delta u_i^h \in V_\Omega^{h,\ell} .
\label{eq:stateSWIP}
\end{equation}
Concerning the degree of the discontinuous Galerkin approximating functions, we maintain the same choice previously presented for the conforming finite element discretization, that is a low-order approximation based on piecewise linear polynomials ($\ell=1$). 
In a similar fashion, the computation of the descent direction $\theta^h$ is performed by means of the conforming discretization using the space of $\mathbb{P}^1 \times \mathbb{P}^1$ Lagrangian finite element functions discussed in section \ref{ref:ConformingFE}.

\subsection{The adjoint problems}

The symmetric weighted interior penalty discontinuous Galerkin formulation of the adjoint problems may be derived following the same procedure used for the state problems. 
In particular, the bilinear forms in \eqref{eq:aDG} also stand for the Neumann and Dirichlet adjoint problems. The corresponding linear forms for $i=N,D$ read as
\begin{equation*}
\begin{aligned}
H_{\Omega,i}^h(\delta r^h) = &
\sum_{T \in \mathcal{T}_h}{\int_T{\Big(k_\Omega \mathbf{M}(\boldsymbol\theta^h) \nabla u_{\Omega,i}^h \cdot \nabla \delta r^h - \nabla \cdot \boldsymbol\theta^h \ u_{\Omega,i}^h \delta r^h \Big) d\mathbf{x}}} \\
  & - \sum_{e \in \mathcal{E}_h^\mathcal{I}}{\int_{e}{\mathbf{n}_e \cdot \llbrace k_{\Omega} 
\mathbf{M}(\boldsymbol\theta^h) \nabla u_{\Omega,i}^h \rrbrace_\alpha \llbracket \delta r^h \rrbracket ds}} \\
  & - \sum_{e \in \mathcal{E}_h^\mathcal{I}}{\int_{e}{\llbracket k_{\Omega} \mathbf{M}(\boldsymbol\theta^h) 
  \nabla u_{\Omega,i}^h \rrbracket \mathbf{n}_e \cdot \llbrace \delta r^h \rrbrace_\alpha ds}} \\
& - (1-\zeta_i) \int_{\partial\mathcal{D}}{ k_{\Omega} \mathbf{M}(\boldsymbol\theta^h) \nabla u_{\Omega,i}^h \cdot \mathbf{n} \ \delta r^h \ ds} .
\end{aligned}
\label{eq:FadjDG} 
\end{equation*}
The discretized solutions of the adjoint problems are the functions $r_{\Omega,i}^h \in V_\Omega^{h,m}$ such that $\forall \delta r_i^h \in V_\Omega^{h,m}$
\begin{equation}
a_{\Omega,i}^h(\delta r_i^h,r_{\Omega,i}^h) = H_{\Omega,i}^h(\delta r_i^h) \quad  , \quad i=N,D .
\label{eq:adjointSWIP}
\end{equation}
It is straightforward to verify that the SWIP-dG formulation of the adjoint problems is consistent, that is \eqref{eq:adjointSWIP} stands substituting the analytical solutions $r_{\Omega,i}$'s to their discretized counterparts $r_{\Omega,i}^h$'s (cf. \cite{MR2882148}).
As previously stated, this property plays a crucial role in the construction of discretizations of optimal order in terms of target functionals and we refer to \cite{MR2361907} for a detailed presentation of this subject.
In order to obtain a higher-order approximation of the adjoint problems, we consider $m=2$, as for the case of the conforming finite element approximation in section \ref{ref:ConformingFE}.

\subsection{Estimate of the error in the shape gradient via the equilibrated fluxes}

In this section we construct the equilibrated fluxes associated with the discontinuous Galerkin approximations \eqref{eq:stateSWIP} and \eqref{eq:adjointSWIP} and we derive the corresponding goal-oriented estimator of the error in the shape gradient. 
Following the procedure introduced for the case of conforming finite element discretization, this problem reduces to estimating the quantity \eqref{eq:ERRvolumetricKV}.

\subsubsection{Equilibrated fluxes for the state equations}

We introduced the notion of equilibrated fluxes for the state problems in definition \ref{def:fluxState}.
In particular, for each problem we aim to construct an $\Hdiv$-conforming flux $\boldsymbol\sigma_{\Omega,i}^h \in W_\Omega^{h,\kappa}$ such that \eqref{eq:eqFLuxState} stands.
We recall that the state problems are approximated using discontinuous Galerkin functions of degree $\ell=1$, thus the fluxes are reconstructed using $RT_0$ finite element functions ($\kappa=0$).
Owing to the nature of the degrees of freedom of the lowest-order Raviart-Thomas finite element functions, the construction of the equilibrated fluxes is straightforward via the prescription of the normal fluxes on all the edges:
\begin{align}
& \begin{aligned}
\int_e{\boldsymbol\sigma_{\Omega,i}^h \cdot \mathbf{n}_e \ \delta t^h \ ds} =  \int_e \Big( \frac{\gamma_e}{|e|} \llbracket u_{\Omega,i}^h \rrbracket - \mathbf{n}_e \cdot \llbrace & k_\Omega \nabla u_{\Omega,i}^h \rrbrace_\alpha \Big) \delta t^h \ ds 
\quad , \\
& \ \forall \delta t^h \in \mathbb{P}^\kappa(e) \ \ \forall e \in \mathcal{E}_h^\mathcal{I}    
  \end{aligned}
\label{eq:internalFluxState} \\
& \begin{aligned}
\int_e{\boldsymbol\sigma_{\Omega,i}^h \cdot \mathbf{n}_e \ \delta t^h \ ds} = (1-\zeta_i & ) \int_e{\Big( \frac{\gamma_e}{|e|} (u_{\Omega,i}^h - U_D) - k_\Omega \nabla u_{\Omega,i}^h \cdot \mathbf{n}_e \Big) \delta t^h \ ds} \\
& - \zeta_i \int_e{g \ \delta t^h \ ds} 
\quad , \quad 
\forall \delta t^h \in \mathbb{P}^\kappa(e) \ \ \forall e \in \mathcal{E}_h^\mathcal{B} . 
\end{aligned}
\label{eq:boundaryFluxState}
\end{align}

\subsubsection{Equilibrated fluxes for the adjoint equations}

In an analogous way, we may construct the equilibrated fluxes for the adjoint problems. 
We remark that owing to the higher-order approximation of \eqref{eq:adjointSWIP} with respect to \eqref{eq:stateSWIP} - i.e. $m=2$ -, the equilibrated fluxes $\boldsymbol\xi_{\Omega,i}^h$'s in definition \ref{def:fluxAdjoint} are sought in the space $W_\Omega^{h,\kappa} \ , \ \kappa=1$. 
The $RT_1$ reconstructed fluxes are such that
\begin{align*}
& \begin{aligned}
\int_e{\boldsymbol\xi_{\Omega,i}^h \cdot \mathbf{n}_e \ \delta q_1^h \ ds} =  \int_e \Big( \frac{\gamma_e}{|e|} \llbracket r_{\Omega,i}^h \rrbracket - \mathbf{n}_e \cdot \llbrace & k_\Omega \nabla r_{\Omega,i}^h \rrbrace_\alpha \Big) \delta q_1^h \ ds 
\quad , \\
& \ \forall \delta q_1^h \in \mathbb{P}^\kappa(e) \ \ \forall e \in \mathcal{E}_h^\mathcal{I}
  \end{aligned}
\\
& \begin{aligned}
\int_e{\boldsymbol\xi_{\Omega,i}^h \cdot \mathbf{n}_e \ \delta q_1^h \ ds} = \ (1-\zeta_i) & \int_e{\Big( \frac{\gamma_e}{|e|} r_{\Omega,i}^h - k_\Omega \nabla r_{\Omega,i}^h \cdot \mathbf{n}_e \Big) \delta q_1^h \ ds} \\
- & \zeta_i \int_e{k_\Omega \mathbf{M}(\boldsymbol\theta^h) \nabla u_{\Omega,i}^h \cdot \mathbf{n}_e \ \delta q_1^h \ ds} 
\quad , \\
& \hspace{55pt} \forall \delta q_1^h \in \mathbb{P}^\kappa(e) \ \ \forall e \in \mathcal{E}_h^\mathcal{B} &
\end{aligned}
\\
& \begin{aligned}
\int_T \boldsymbol\xi_{\Omega,i}^h \cdot \boldsymbol\delta \mathbf{q}_2^h \ d\mathbf{x} = & - \int_T{ k_\Omega \nabla r_{\Omega,i}^h \cdot \boldsymbol\delta \mathbf{q}_2^h \ d\mathbf{x}} \\
& + \sum_{e \subset \partial T \setminus \mathcal{E}_h^\mathcal{B}}{\alpha_{T(e),e} \int_e{k_\Omega \llbracket r_{\Omega,i}^h \rrbracket \boldsymbol\delta \mathbf{q}_2^h \cdot \mathbf{n}_e \ ds}} \\
& + (1-\zeta_i) \sum_{e \subset \partial T \cap \mathcal{E}_h^\mathcal{B}}{\int_e{k_\Omega r_{\Omega,i}^h \boldsymbol\delta \mathbf{q}_2^h \cdot \mathbf{n}_e \ ds}} 
\quad , \\
& \hspace{60pt} \forall \boldsymbol\delta \mathbf{q}_2^h \in [\mathbb{P}^{\kappa-1}(T)]^d \ \ \forall T \in \mathcal{T}_h .
\end{aligned}
\end{align*}
\begin{rmrk}
The flux reconstruction procedure presented for both the state and adjoint equations relies solely on the computation of local quantities and is computationally inexpensive.
A great advantage of the discontinuous Galerkin framework is represented by the cheap algorithms to construct the equilibrated fluxes on an element-wise level as discussed by several authors, e.g. in \cite{MR2376644,MR2261011,MR3018142,MR3249368}. 
As previously remarked for the construction of the equilibrated fluxes in the case of conforming finite element discretizations, the local nature of the procedure allows the parallelization of the algorithm and the exploitation of modern parallel architectures.
\end{rmrk}

\subsubsection{Goal-oriented equilibrated fluxes error estimator}
\label{ref:goalDG}

We may now evaluate the term \eqref{eq:Eu} for the Neumann and Dirichlet problems by exploiting the information carried by \eqref{eq:aDG} and \eqref{eq:Fweak}. 
We recall that the symmetric weighted interior penalty discontinuous Galerkin method under analysis is adjoint consistent (cf. \cite{MR2882148}).
Owing to the continuity of $r_{\Omega,i}$ and $k_\Omega \nabla r_{\Omega,i} \cdot \mathbf{n}_e$ on all the edges $e$'s and adding the expression of the equilibrated fluxes \eqref{eq:eqFLuxState}, we obtain:
\begin{equation*}
\begin{aligned}
\widetilde{E}^h_{u,_i} \coloneqq & F_{\Omega,i}(r_{\Omega,i}) - a_{\Omega,i}^h(u_{\Omega,i}^h,r_{\Omega,i}) \\
= & \ \zeta_i \int_{\partial\mathcal{D}}{g r_{\Omega,i} \ ds} + (1-\zeta_i) \int_{\partial\mathcal{D}}{U_D (\gamma r_{\Omega,i} - k_\Omega \nabla r_{\Omega,i} \cdot \mathbf{n} ) ds} \\
& - \sum_{T \in \mathcal{T}_h}{\int_T{\Big( k_{\Omega} \nabla u_{\Omega,i}^h \cdot \nabla r_{\Omega,i} + u_{\Omega,i}^h r_{\Omega,i} \Big) d\mathbf{x}}} \\
& + \sum_{e \in \mathcal{E}_h^\mathcal{I}}{\int_{e}{ \llbracket u_{\Omega,i}^h \rrbracket k_{\Omega} \nabla r_{\Omega,i} \cdot \mathbf{n}_e \ ds}} \\
& + (1-\zeta_i) \sum_{e \in \mathcal{E}_h^\mathcal{B}}{ \int_{e}{ \llbracket u_{\Omega,i}^h \rrbracket k_{\Omega} \nabla r_{\Omega,i} \cdot \mathbf{n}_e \ ds}} \\
& + \sum_{T \in \mathcal{T}_h}{\int_T{\Big( \nabla \cdot \boldsymbol\sigma_{\Omega,i}^h + \pi_Z^\kappa u_{\Omega,i}^h \Big) r_{\Omega,i} \ d\mathbf{x}}} .
\end{aligned}
\label{eq:GOAL1dg}
\end{equation*}
We integrate by parts the last integral and we plug in the expressions \eqref{eq:internalFluxState}-\eqref{eq:boundaryFluxState} of the equilibrated fluxes for the state problems. It follows from the homogeneous Dirichlet condition fulfilled by the adjoint solution $r_{\Omega,D}$ on $\partial\mathcal{D}$ that
\begin{equation*}
\begin{aligned}
\widetilde{E}^h_{u,_i} = & \zeta_i \int_{\partial\mathcal{D}}{\Big( g - \pi_{W \cdot \mathbf{n}}^\kappa(g) \Big) r_{\Omega,i} \ ds} + (1-\zeta_i) \int_{\partial\mathcal{D}}{ (u_{\Omega,i}^h - U_D) k_{\Omega} \nabla r_{\Omega,i} \cdot \mathbf{n} \ ds} \\
& + \sum_{e \in \mathcal{E}_h^\mathcal{I}}{\int_{e}{ \Big( \llbracket u_{\Omega,i}^h \rrbracket k_{\Omega} \nabla r_{\Omega,i} \cdot \mathbf{n}_e  
+ \llbracket \boldsymbol\sigma_{\Omega,i}^h \cdot \mathbf{n}_e \rrbracket r_{\Omega,i} \Big) \ ds}} \\
& + \sum_{T \in \mathcal{T}_h}{\int_T{\Big( \pi_Z^\kappa u_{\Omega,i}^h - u_{\Omega,i}^h \Big) r_{\Omega,i} \ d\mathbf{x}}} \\
& - \sum_{T \in \mathcal{T}_h}{\int_T{\Big( \boldsymbol\sigma_{\Omega,i}^h + k_\Omega \nabla u_{\Omega,i}^h \Big) \cdot \nabla r_{\Omega,i} \ d\mathbf{x}}} .
\end{aligned}
\label{eq:GOAL2dg}
\end{equation*}
We remark that owing to the continuity of the normal traces of the fluxes, $\llbracket \boldsymbol\sigma_{\Omega,i}^h \cdot \mathbf{n}_e \rrbracket = 0$ for all the internal edges. 
By adding and subtracting the terms $r_{\Omega,i}^h$'s featuring the discontinuous Galerkin approximations of the adjoint solutions and taking into account their equilibrated fluxes $\boldsymbol\xi_{\Omega,i}^h$'s, the expressions of the errors $\widetilde{E}^h_{u,_i}$'s read as:
\begin{equation}
\begin{aligned} 
\widetilde{E}^h_{u,_i} = & \ \zeta_i \int_{\partial\mathcal{D}}{\Big( g - \pi_{W \cdot \mathbf{n}}^\kappa(g) \Big) r_{\Omega,i}^h \ ds} 
+ \zeta_i \int_{\partial\mathcal{D}}{\Big( g - \pi_{W \cdot \mathbf{n}}^\kappa(g) \Big)(r_{\Omega,i} - r_{\Omega,i}^h) ds} \\
& - (1-\zeta_i) \int_{\partial\mathcal{D}}{ (u_{\Omega,i}^h - U_D) \boldsymbol\xi_{\Omega,i}^h \cdot \mathbf{n} \ ds} \\
& + (1-\zeta_i) \int_{\partial\mathcal{D}}{ (u_{\Omega,i}^h - U_D) \Big(k_\Omega \nabla r_{\Omega,i} + \boldsymbol\xi_{\Omega,i}^h \Big) \cdot \mathbf{n} \ ds} \\
& - \sum_{e \in \mathcal{E}_h^\mathcal{I}}{\int_{e}{ \llbracket u_{\Omega,i}^h \rrbracket \boldsymbol\xi_{\Omega,i}^h \cdot \mathbf{n}_e \ ds}}
+ \sum_{e \in \mathcal{E}_h^\mathcal{I}}{\int_{e}{ \llbracket u_{\Omega,i}^h \rrbracket \Big(k_\Omega \nabla r_{\Omega,i} + \boldsymbol\xi_{\Omega,i}^h \Big) \cdot \mathbf{n}_e \ ds}} \\
& + \sum_{T \in \mathcal{T}_h}{\int_T{\Big( \pi_Z^\kappa u_{\Omega,i}^h - u_{\Omega,i}^h \Big) r_{\Omega,i}^h \ d\mathbf{x}}} \\
& + \sum_{T \in \mathcal{T}_h}{\int_T{\Big( \pi_Z^\kappa u_{\Omega,i}^h - u_{\Omega,i}^h \Big)(r_{\Omega,i} - r_{\Omega,i}^h) d\mathbf{x}}} \\
& + \sum_{T \in \mathcal{T}_h}{\int_T{\Big( \boldsymbol\sigma_{\Omega,i}^h + k_\Omega \nabla u_{\Omega,i}^h \Big) \cdot k_\Omega^{-1} \boldsymbol\xi_{\Omega,i}^h \ d\mathbf{x}}} \\
& - \sum_{T \in \mathcal{T}_h}{\int_T{\Big( \boldsymbol\sigma_{\Omega,i}^h + k_\Omega \nabla u_{\Omega,i}^h \Big) \cdot \Big(\nabla r_{\Omega,i} + k_\Omega^{-1} \boldsymbol\xi_{\Omega,i}^h\Big) d\mathbf{x}}} .
\end{aligned}
\label{eq:GOAL3dg}
\end{equation}
As already remarked in the estimator derived for the conforming finite element discretization, both the unknown exact solutions of the adjoint problems and their numerical counterparts appear in \eqref{eq:GOAL3dg}. 
Let $I_m^\ell: V_\Omega^{h,m} \rightarrow V_\Omega^{h,\ell}$ be the projection operator from the space of high-order discontinuous Galerkin approximations to the low-order one. 
The fully-computable version of the estimator of the quantity $\widetilde{E}^h_{u,_i}$ is obtained by substituting $r_{\Omega,i} - r_{\Omega,i}^h$ with $r_{\Omega,i}^h - I_m^\ell r_{\Omega,i}^h$ and $\nabla r_{\Omega,i}$ with $\nabla r_{\Omega,i}^h$. \\
Eventually, the upper bound $\overline{E}$ of the error in the shape gradient is obtained by plugging the expressions of $\widetilde{E}^h_{u,_N}$ and $\widetilde{E}^h_{u,_D}$ arising from \eqref{eq:GOAL3dg} into \eqref{eq:ERRvolumetricKV} and by considering its absolute value.
\begin{rmrk}
In \cite{MR3327021}, the authors prove that the contribution of the terms in \eqref{eq:GOAL3dg} featuring the exact solution of the adjoint problems is negligible and the goal-oriented error estimator 
constructed using the previously described equilibrated fluxes approach is asymptotically exact. 
This property guarantees that the bound $\overline{E}$ of the error in the shape gradient tends to zero by reducing the mesh size. Hence, the mesh adaptation procedure performed by the certified descent algorithm eventually leads to the fulfillment of condition \eqref{eq:certified}.
\end{rmrk}

\section{Numerical results}
\label{ref:numerics}

In this section we present some numerical results of the application of the certified descent algorithm based on the equilibrated fluxes approach for the estimation of the error in the shape gradient.
We consider the problem of electrical impedance tomography as a proof of concept to establish some properties of this variant of the certified descent algorithm on a non-trivial scalar test case.
Shape optimization methods are known to provide poor reconstructions in inverse ill-posed problems as the EIT. 
Within this framework, the certified descent algorithm does not aim to remedy the issue of local minima but may act as a counterexample confirming the limitations of gradient-based strategies when dealing with ill-posed problems.
The current work presents an improvement of the original certified descent algorithm introduced in \cite{giacomini:hal-01201914}, in particular since it uses solely local quantities to compute the error in the shape gradient required by the certification procedure.
The numerical results in this section focus on the quantitative bound $\overline{E}$ obtained using the equilibrated fluxes approach for both conforming finite element and discontinuous Galerkin discretizations.

The simulations are obtained using FreeFem++ \cite{MR3043640} and are based on a mesh moving approach for the deformation of the domain.
It is well-known in the literature (cf. e.g. \cite{smo-AP}) that numerical shape optimization may result in poor optimal shapes showing high-frequency oscillations of the boundaries whose length scale is comparable with the mesh size \cite{giacomini:hal-01201914}.
In order to remedy these issues of regularity of the mesh, several strategies have been proposed. 
A possible workaround relies on introducing a regularizing term in the cost functional through a perimeter penalization \cite{MR2270119}. Nonetheless, this strategy strongly depends on the weight of the penalty parameter which may be difficult to tune. 
In order to construct a more general and automatic optimization algorithm, we resort to a two-mesh strategy \cite{smo-AP} which explicitly smoothes the boundary of the shape at each iteration: after solving the state problems using a fine mesh in order to properly capture all the important features of the solutions, a coarser mesh is extracted and a descent direction is computed; thus, the nodes of the coarser mesh are displaced and a novel fine mesh is obtained from the coarser one via a mesh adaptation procedure.
This latter step may be performed either through a uniform mesh refinement or by means of an adaptation routine that exploits the information of the last computed solution of the state problem.
We remark that by deforming the coarse mesh, oscillations of the novel boundary are prevented and consequently a regularization is directly introduced into the problem.
Being this strategy dependent solely on the projection of the information of the finite element solution of the state problems from the fine mesh to the coarse one, the overall strategy results in numerically more stable computations and more regular optimal shapes without introducing any additional parameter \cite{smo-AP}. \\
Within the previously described framework, changes in the topology of the shape are not allowed and the correct number of inclusions has to be set at the beginning of the algorithm and remains the same throughout its evolution.
Techniques based on both topological and shape derivatives to account for topological changes inside the domain have been investigated e.g. in \cite{Hintermuller2008}.

\subsection{Numerical assessment of the goal-oriented estimator}
\label{ref:validation}

In order to evaluate the goal-oriented error estimators derived in sections \ref{ref:goalFE} and \ref{ref:goalDG}, we consider a configuration for which the analytical solution of the state problems is known. 
We introduce the polar coordinate system $(\rho,\vartheta)$ and we set $\mathcal{D} \coloneqq \{ \mathbf{x}=(x,y) \ | \ x^2+y^2 \leq \rho_E^2 \}$ and $\Omega \coloneqq \{ \mathbf{x}=(x,y) \ | \ x^2+y^2 \leq \rho_I^2 \}$ with $\rho_I=4$ and $\rho_E=5$. 
The value of the conductivity parameter is $k_I=10$ inside $\Omega$ and $k_E=1$ in $\mathcal{D} \setminus \Omega$.
We consider the Neumann boundary condition $g=\cos(M \vartheta) \ , \ M=5$ and the Dirichlet datum $U_D$ is the trace of the following function which is the analytical solution of problem \eqref{eq:statePB}:
$$
u_{\Omega,N} = \begin{cases}
                C_0 J_M \left(-i \rho k_I^{-\frac{1}{2}}\right)\cos(M \vartheta) 
\ & , \ \rho \in [0,\rho_I]\\
                \left[ C_1 J_M \left(-i \rho k_E^{-\frac{1}{2}}\right) + C_2 Y_M 
\left(-i \rho k_E^{-\frac{1}{2}}\right) \right] \cos(M \vartheta) \ & , \ \rho 
\in (\rho_I,\rho_E]
               \end{cases}
$$
where $J_M(\cdot)$ and $Y_M(\cdot)$ respectively represent the first- and second-kind Bessel functions of order $M$. 
The constants $C_0,\ldots,C_2$ are detailed in table \ref{tab:constants}.
\begin{table}
\centering
\begin{tabular}[hbt]{| c || l | l |}
\hline
Constant & $\mathbb{R}\text{e}[C_i]$ & $\mathbb{I}\text{m}[C_i]$ \\
\hline & &
\\ [-1em] \hline
$C_0$ & $-6.3 \cdot 10^{-9}$ & $+40.39491005$ \\
\hline
$C_1$ & $+1.30145994$ & $+0.325482825$ \\
\hline
$C_2$ & $+1.5 \cdot 10^{-11}$ & $-1.301459935$ \\
\hline
\end{tabular}
\caption{Constants for the analytical solution.}
\label{tab:constants}
\end{table}

We recall that for both the conforming finite element and the discontinuous Galerkin discretizations we have $\ell=1$ and $m=2$, that is the state problems are solved using functions of degree $1$, whereas the adjoint solutions are approximated using functions of degree $2$.
The corresponding equilibrated fluxes are sought respectively in the space of $RT_0$ and $RT_1$ finite element functions. \\
Figure \ref{fig:QoIrateFE} presents the convergence history of the discretization error in the shape gradient and the goal-oriented estimator $\overline{E}$ for the case of conforming finite element. 
The corresponding quantities for the case of discontinuous Galerkin are depicted in figure \ref{fig:QoIrateDG}. 
The analytical error is computed by substituting in \eqref{eq:errorH} the expression \eqref{eq:volumetricKV} evaluated using respectively the analytical solution introduced at the beginning of this subsection and the numerical solutions arising from the conforming finite element and the discontinuous Galerkin discretizations. \\
Eventually, in figure \ref{fig:effIdx} we present the effectivity indices for the discussed discretizations. 
The effectivity index $\eta$ is defined as the ratio between the estimator and the exact error, that is $\eta \coloneqq \overline{E}/E^h$.
If the effectivity index is bigger (respectively smaller) than $1$, one is overestimating (respectively underestimating) the error. 
The evolution of the effectivity indices in figure \ref{fig:effIdx} confirms that the constructed estimators are guaranteed - that is they provide an upper bound of the error since $\eta > 1$ - and are asymptotically exact since the $\eta \searrow 1$ as the mesh size tends to $0$.
\begin{figure}[htb]
    \subfloat[Conforming finite element.]
    {
    \includegraphics[width=0.32 \columnwidth]{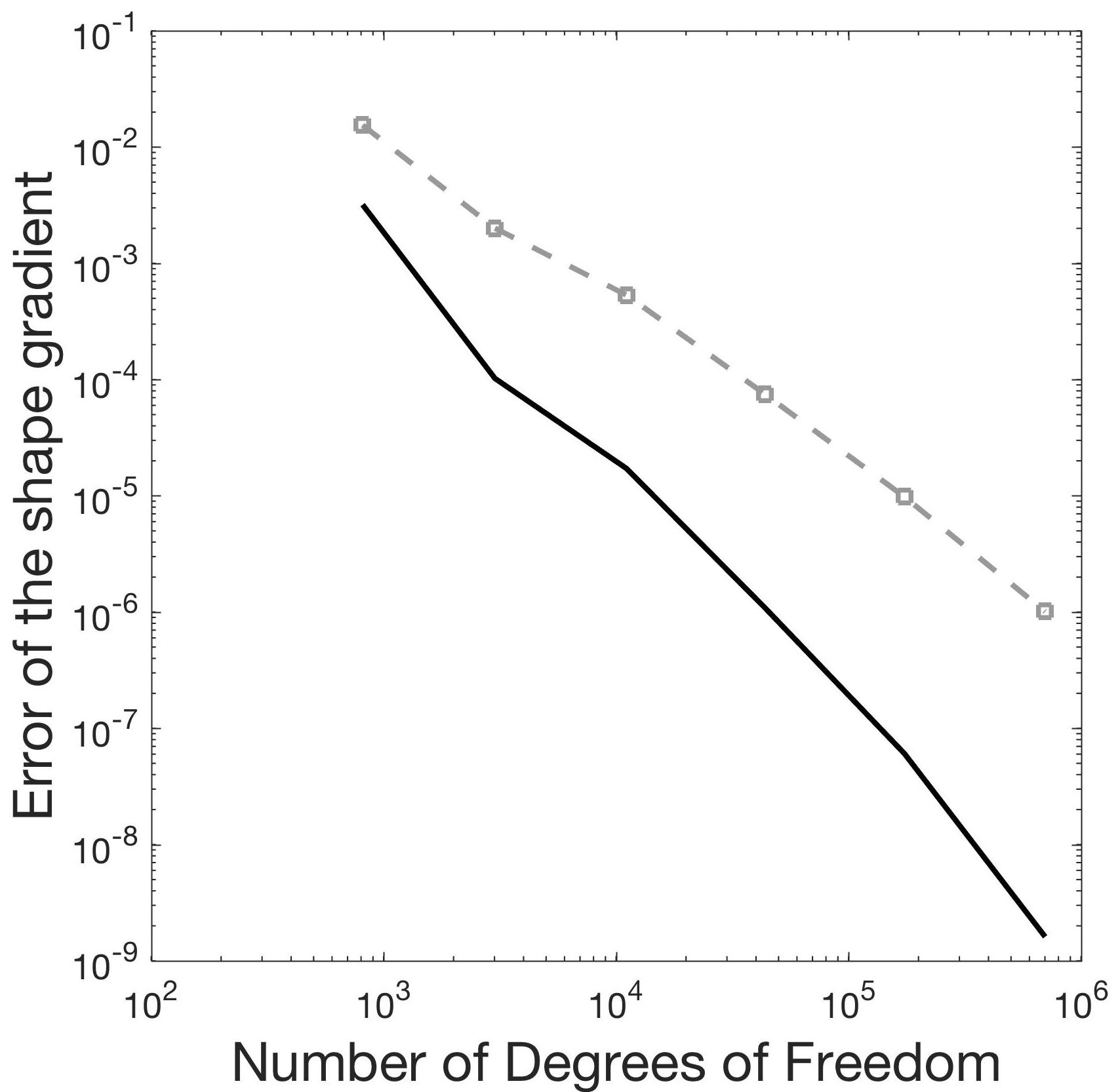}
    \vspace{10pt}
    \label{fig:QoIrateFE}
    }
    \subfloat[Discontinuous Galerkin.]
    {
    \includegraphics[width=0.32 \columnwidth]{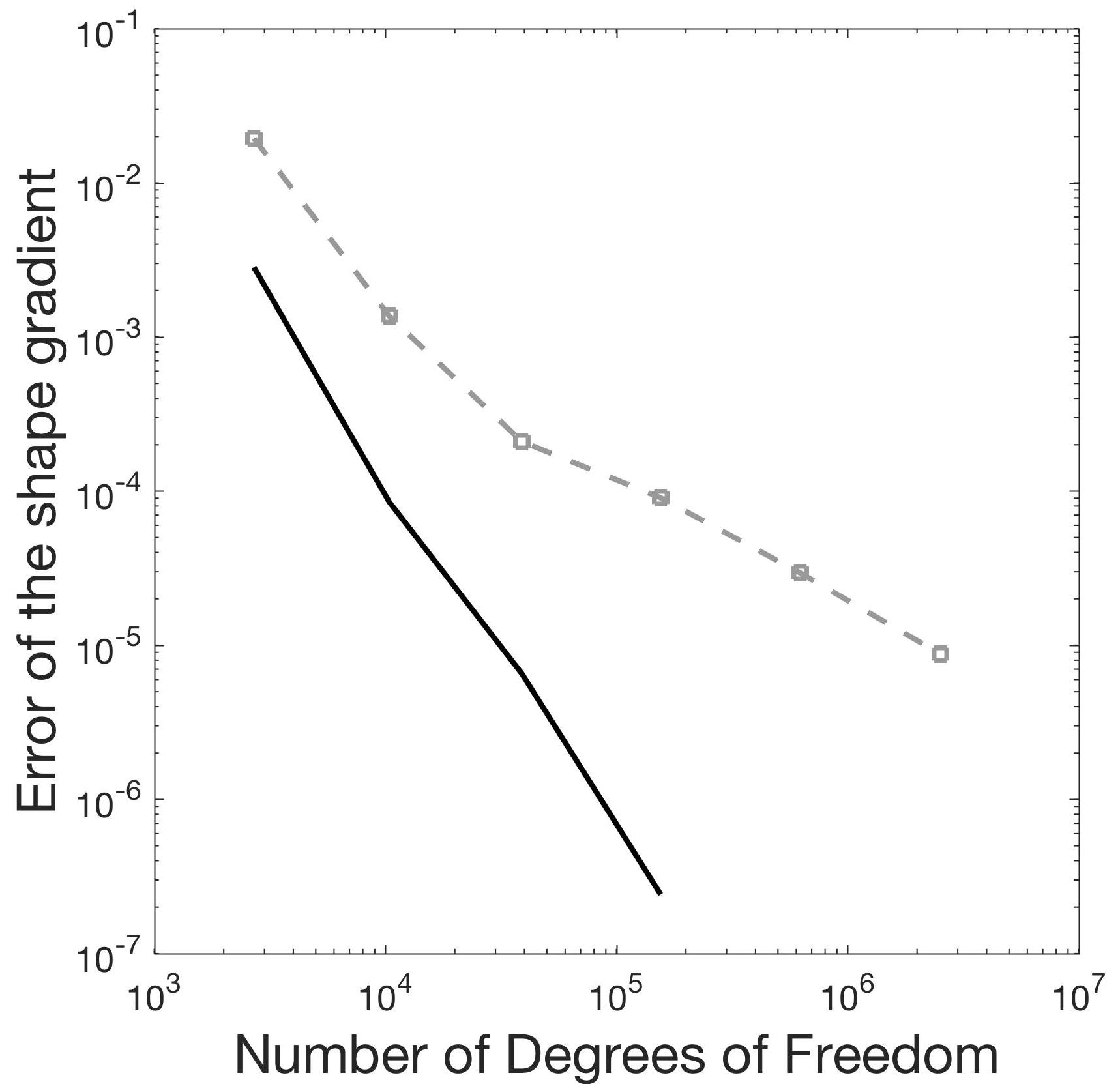}
    \label{fig:QoIrateDG}
    }
    \subfloat[Effectivity index.]
    {
    \includegraphics[width=0.32 \columnwidth]{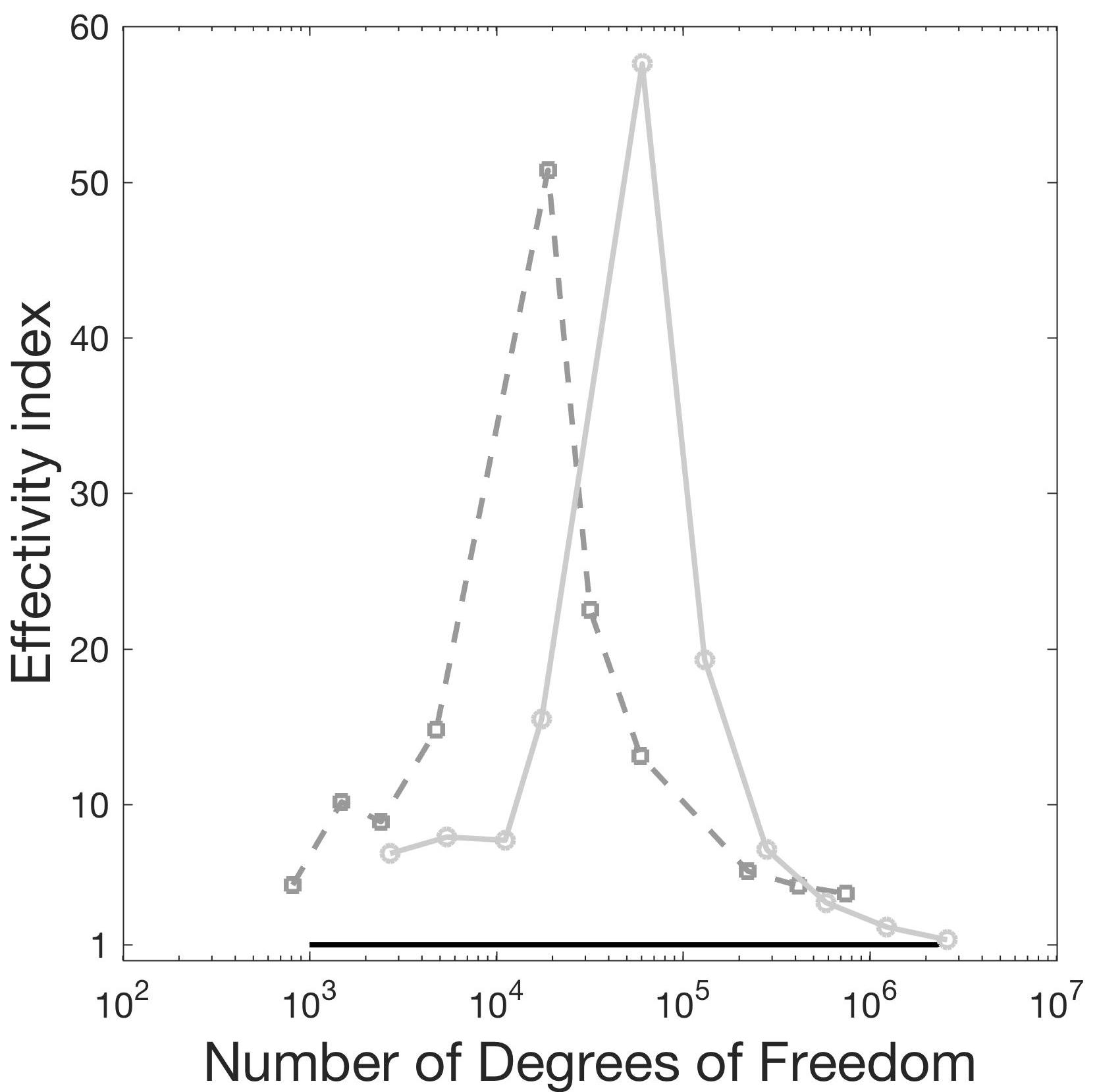}
    \label{fig:effIdx}
    }
    \caption{Convergence rates and effectivity indices of the estimators 
    of the error in the shape gradient with respect to the number of 
    degrees of freedom.  
    Analytical error in the shape gradient (solid black);
    goal-oriented estimator of the error based on the equilibrated fluxes (dashed gray squares) 
    for the discretizations based on 
    (a) conforming finite element and (b) discontinuous Galerkin.     
    (c) Effectivity indices for the conforming finite element (dark gray squares) and 
    discontinuous Galerkin (light gray circles).
    }
    \label{fig:convergences}
\end{figure}

\subsection{Reconstruction of a single inclusion}
\label{ref:1incl}

We consider the problem of reconstructing the inclusion $\Omega$ defined in the previous section by means of a couple $(g,U_D)$ of measurements on the external boundary $\partial\mathcal{D}$. 
In the following simulations, we consider a stopping criterion that combines the condition in step 8 of algorithm \ref{scpt:shape-opt-adaptive} and a bound on the number of admissible mesh elements - i.e. the size of the state and the adjoint problems.
This choice is due to the ill-posed nature of the electrical impedance tomography problem that we chose as test case for the certified descent algorithm. 
As we will highlight throughout this section the ill-posedness of the problem represents an issue that prevents gradient-based strategies from efficiently solving the EIT problem since a huge precision is demanded after few iterations of the optimization procedure. 

First, we consider the configuration described in figure \ref{fig:circleInterface}. The initial guess for the inclusion is represented by the circle of radius $\rho_{\text{ini}}=2$.
The certified descent algorithm is able to correctly identify the interface along which the conductivity parameter $k_\Omega$ is discontinuous (Fig. \ref{fig:circleInterface}). 
Moreover, figure \ref{fig:circleObj} shows that the objective functional $J(\Omega)$ is monotonically decreasing, meaning a genuine descent direction is computed at each 
iteration of the algorithm.
In tables \ref{tab:circleConvFE}-\ref{tab:circleConvDG} we present the specifics of the meshes used to certify the descent direction at several iterations of the CDA whereas in figure \ref{fig:meshCircle} different meshes generated by the algorithm for conforming finite element and discontinuous Galerkin discretizations are proposed. 
In particular, we observe that coarse meshes are reliable during the initial iterations of the algorithm to identify a genuine descent direction and the number of degrees of freedom increases when approaching a 
minimum of the functional $J(\Omega)$. 
This is also well-explained by figure \ref{fig:circleDOF} in which the evolution of the number of degrees of freedom is depicted.
Concerning the quality of the computational meshes, they are mainly uniform during the initial iterations and are refined in the regions where the numerical error in the shape gradient is more important. 
In particular, finer - and possibly anisotropic - elements tend to concentrate near the external boundary where the measurements are located and near the internal interface where the conductivity parameter is discontinuous (Fig. \ref{fig:meshCircle20FE}-\ref{fig:meshCircle20DG}).
Eventually, we remark that figure \ref{fig:circleDOF} also highlights the ill-posed nature of the problem since a huge amount of degrees of freedom is rapidly required by the CDA to certify the descent direction, testifying the difficulties of gradient-based methods to handle inverse problems as the electrical impedance tomography.
By comparing the approximations arising from conforming finite element and discontinuous Galerkin formulations, we remark that the latter provides sharper bounds of the error in the shape gradient thus allowing the algorithm to automatically stop for a given tolerance $\texttt{tol}=10^{-6}$ (cf. table \ref{tab:circleConvDG}). 
On the contrary, the certification in the case of conforming finite element is still able to identify a genuine descent direction at each iteration but rapidly requires a huge number of mesh elements making the 
computational cost explode.
\begin{figure}[htbp]
    \subfloat[Reconstructed interface.]
    {
    \includegraphics[width=0.28 \columnwidth]{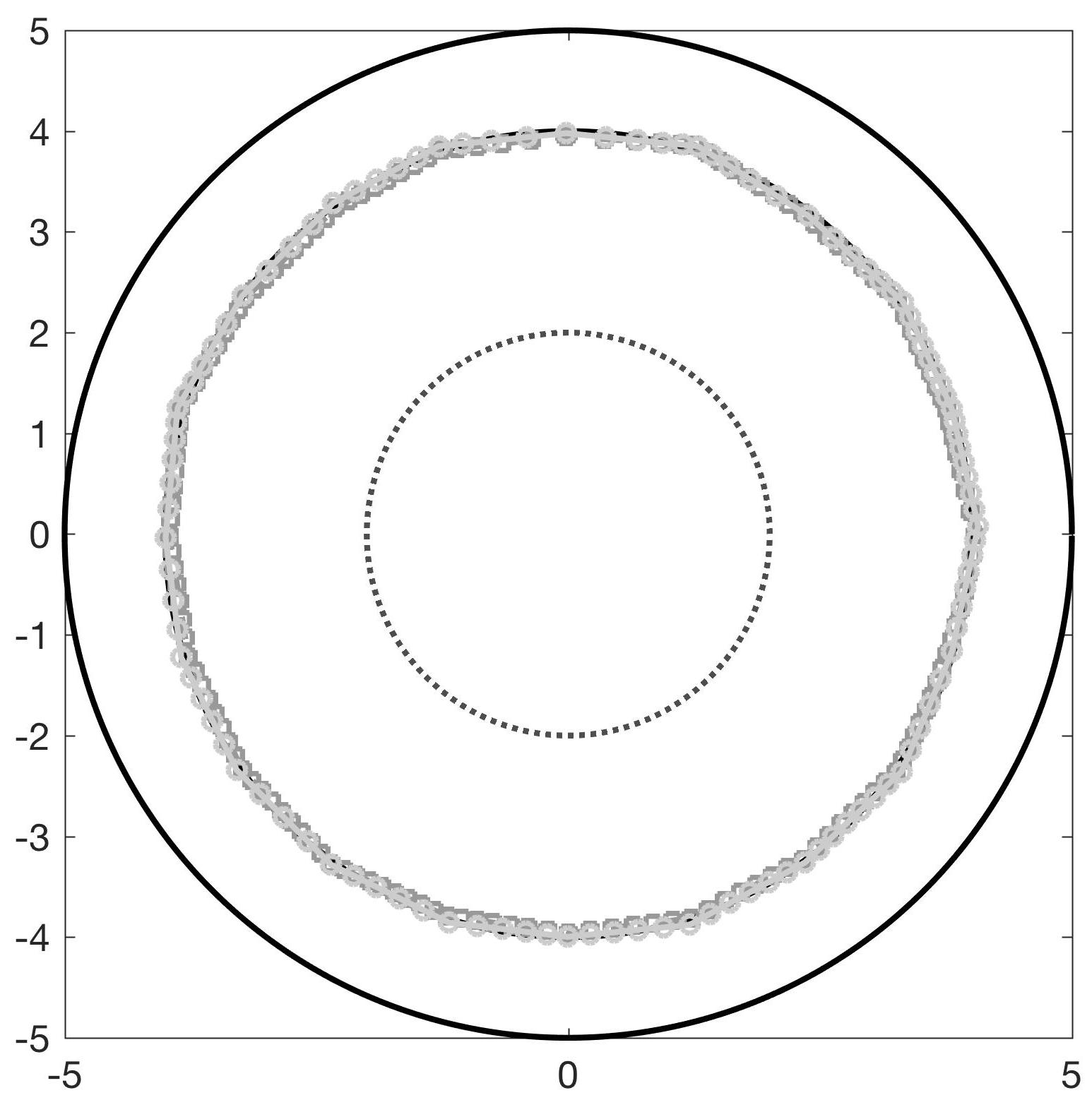}
    \vspace{10pt}
    \label{fig:circleInterface}
    }
    \hfil
    \subfloat[Objective functional.]
    {
    \includegraphics[width=0.33 \columnwidth]{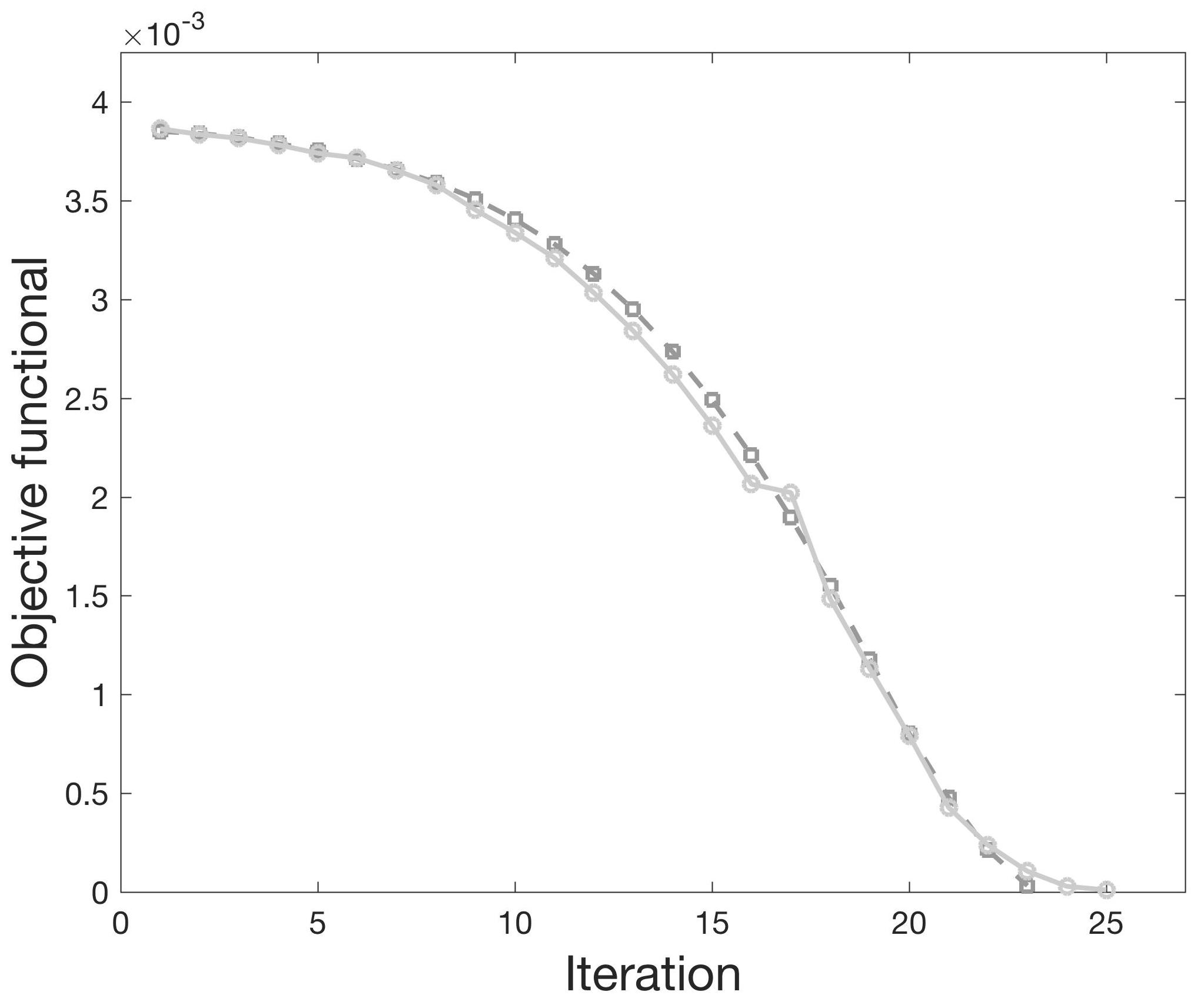}
    \label{fig:circleObj}
    }
    \hfil
    \subfloat[Degrees of freedom.]
    {
    \includegraphics[width=0.33 \columnwidth]{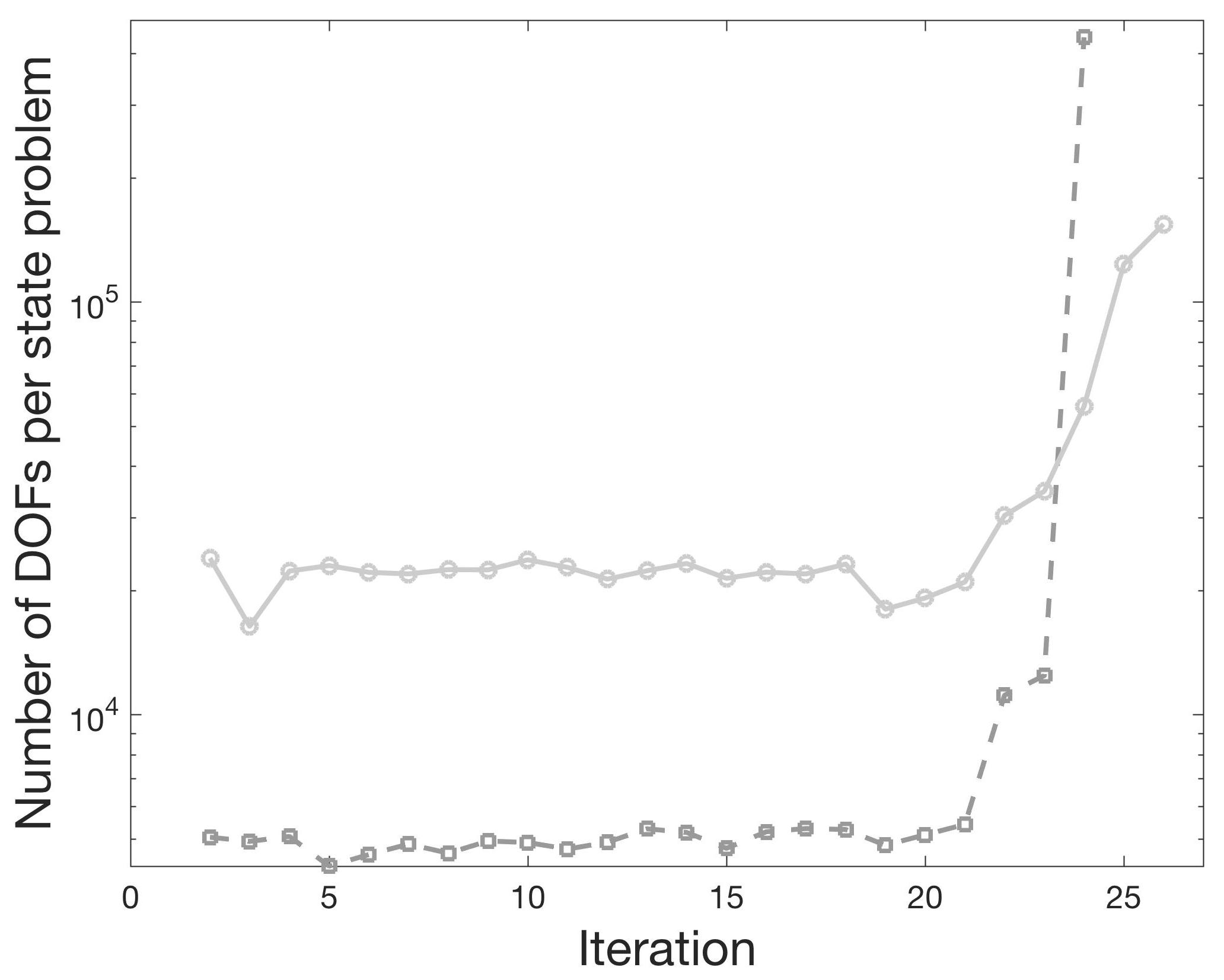}
    \label{fig:circleDOF}
    }
    \caption{Certified descent algorithm for the identification of one inclusion. 
    (a) Initial configuration (dotted black), target inclusion (solid black) and 
    reconstructed interface. (b) Evolution of the objective functional. 
    (c) Number of degrees of freedom. 
    Inversion performed using conforming finite element (dark gray squares) and 
    discontinuous Galerkin (light gray circles).}
    \label{fig:circle}
\end{figure}
\begin{figure}[hbtp]
	\centering
    \subfloat[Iteration \#10.]
    {
    \includegraphics[width=0.35 \columnwidth]{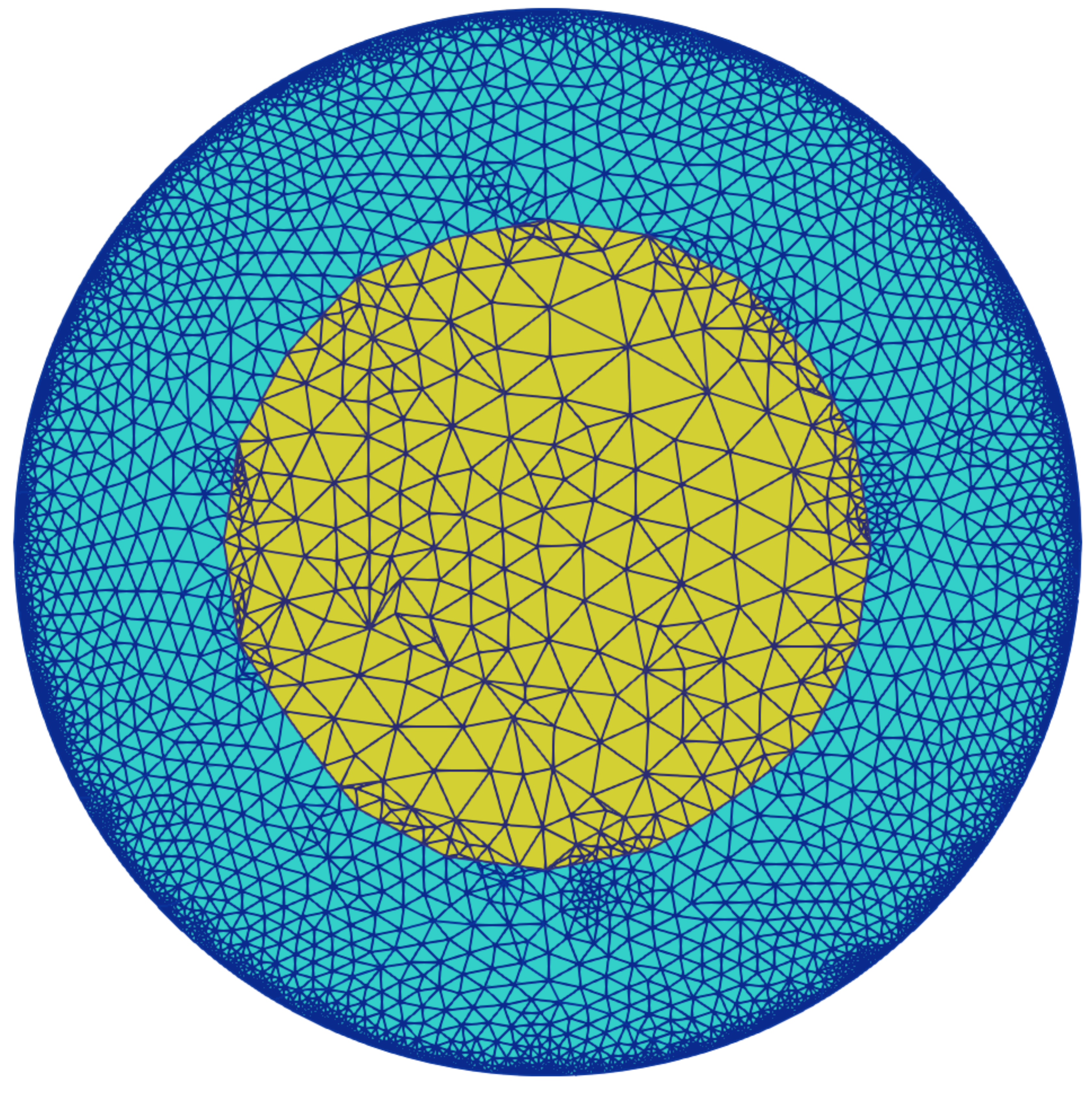}
    \vspace{10pt}
    \label{fig:meshCircle10FE}
    }
    \hfil
    \subfloat[Iteration \#20.]
    {
    \includegraphics[width=0.35 \columnwidth]{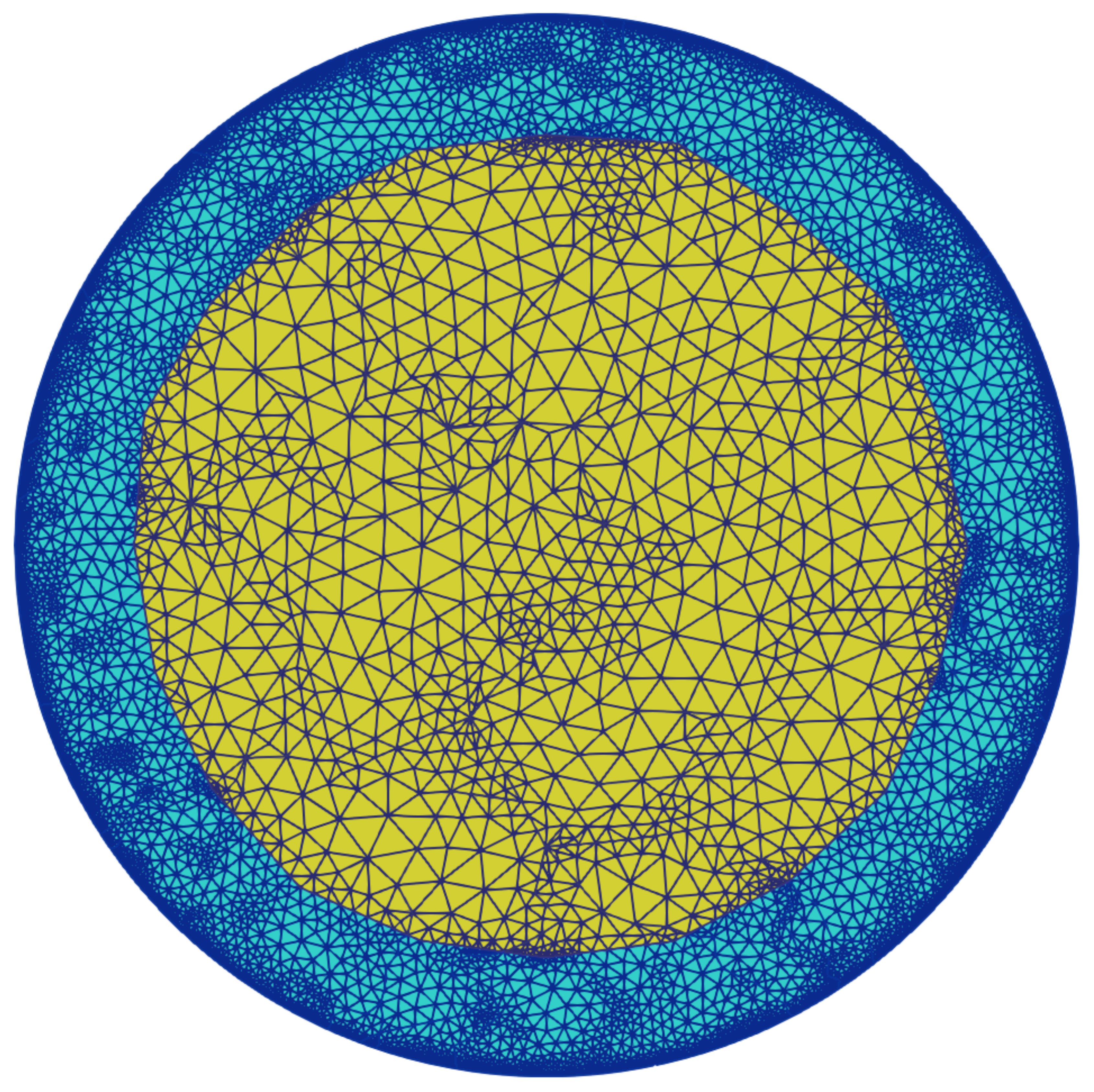}
    \label{fig:meshCircle20FE}
    }
    
    \subfloat[Iteration \#10.]
    {
    \includegraphics[width=0.35 \columnwidth]{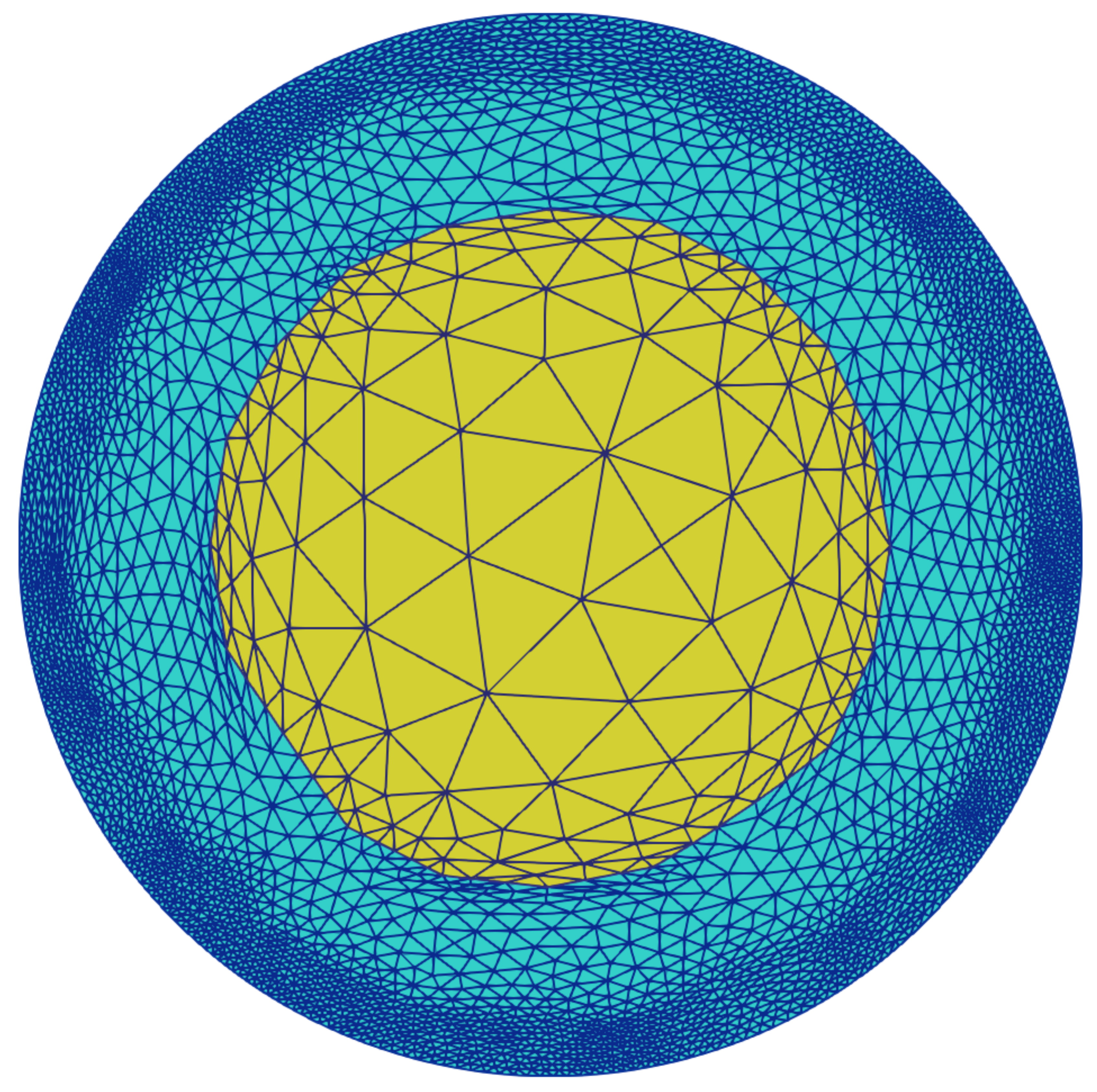}
    \vspace{10pt}
    \label{fig:meshCircle10DG}
    }
    \hfil
    \subfloat[Iteration \#20.]
    {
    \includegraphics[width=0.35 \columnwidth]{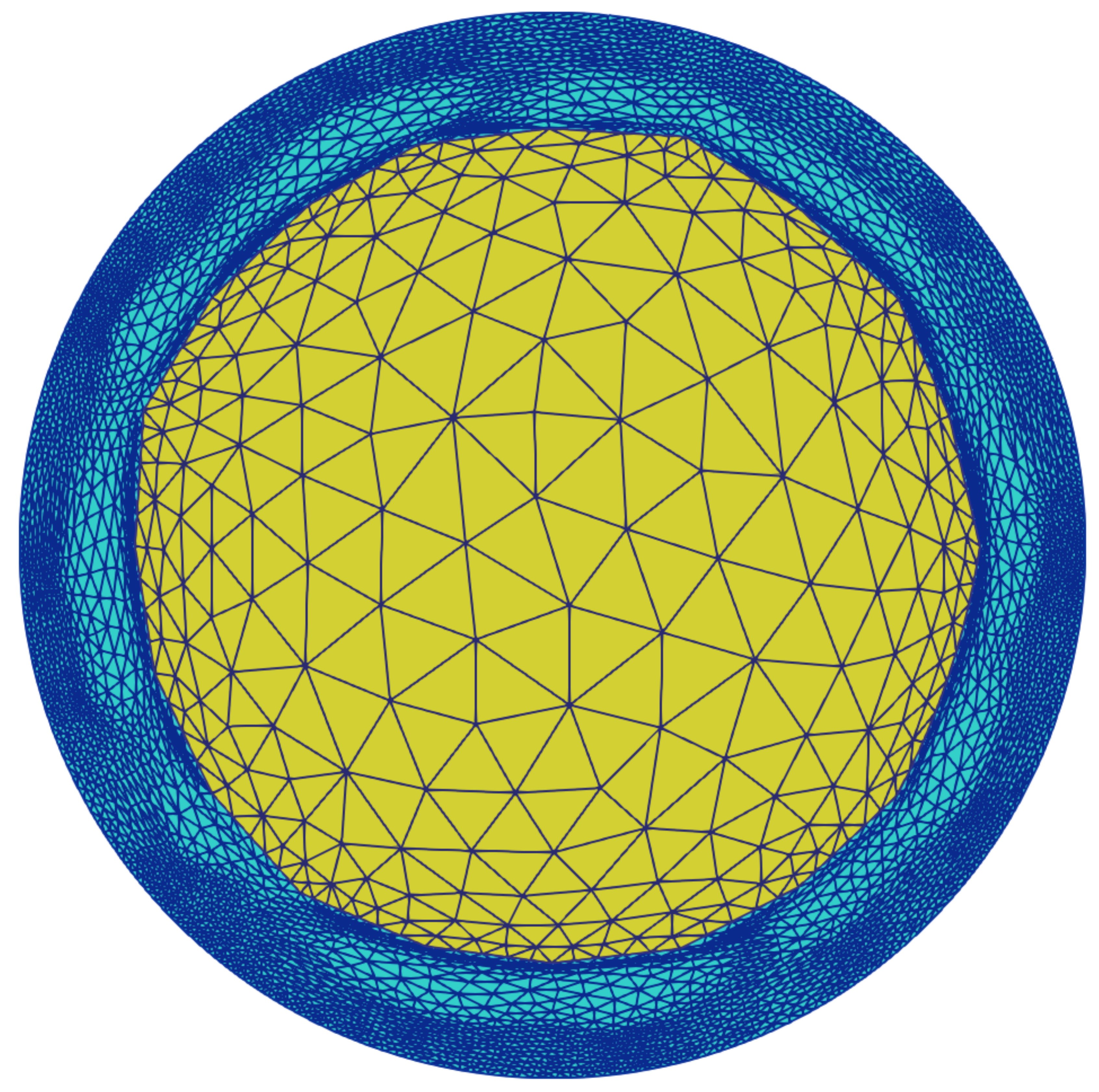}
    \label{fig:meshCircle20DG}
    }
    \caption{Meshes generated by the certified descent algorithm for the test case in figure \ref{fig:circleInterface}.
    Top: conforming finite element. Bottom: discontinuous Galerkin.}
    \label{fig:meshCircle}
\end{figure}
\begin{table}[htb]
    \centering
    \subfloat[Conforming finite element.]
    {
    \centering
    \begin{tabular}[hbt]{| c | c || l | l |}
    \hline
    Iteration & $\# \mathcal{T}_h$ & $\langle d_h J(\Omega),\boldsymbol\theta^h \rangle$ & $\overline{E}$ \\
    \hline & & & 
    \\ [-1em] \hline
    1 & 8863 & $-1.45 \cdot 10^{-6}$ & $1.12 \cdot 10^{-6}$ \\
    \hline
    5 & 8582 & $-4.36 \cdot 10^{-6}$ & $3.31 \cdot 10 ^{-6}$ \\
    \hline
    10 & 8650 & $-1.37 \cdot 10^{-5}$  & $9.17 \cdot 10 ^{-6}$ \\
    \hline
    15 & 9335 & $-2.83 \cdot 10^{-5}$ & $1.80 \cdot 10^{-5}$ \\
    \hline
    20 & 19683 & $-1.53 \cdot 10^{-5}$ & $1.07 \cdot 10^{-5}$ \\
    \hline
    22 & 864808 & $-1.18 \cdot 10^{-6}$ & $1.16 \cdot 10^{-6}$ \\
    \hline
    \end{tabular}
    \label{tab:circleConvFE}
    }
    \hfil
    \subfloat[Discontinuous Galerkin.]
    {
    \centering
    \begin{tabular}[hbt]{| c | c || l | l |}
    \hline
    Iteration & $\# \mathcal{T}_h$ & $\langle d_h J(\Omega),\boldsymbol\theta^h \rangle$ & $\overline{E}$ \\
    \hline & & & 
    \\ [-1em] \hline
    1 & 5454 & $-2.02 \cdot 10^{-6}$ & $1.86 \cdot 10^{-6}$ \\
    \hline
    5 & 7307 & $-6.63 \cdot 10^{-6}$ & $6.24 \cdot 10^{-6}$ \\
    \hline
    10 & 7099 & $-1.73 \cdot 10^{-5}$ & $9.20 \cdot 10^{-6}$ \\
    \hline
    15 & 7307 & $-3.06 \cdot 10^{-5}$ & $7.11 \cdot 10^{-6}$ \\
    \hline
    20 & 10123 & $-1.38 \cdot 10^{-5}$ & $8.98 \cdot 10^{-6}$ \\
    \hline
    24 & 51406 & $-4.60 \cdot 10^{-7}$ & $4.55 \cdot 10^{-7}$ \\
    \hline
    \end{tabular}
    \label{tab:circleConvDG}
    }
\caption{Test case in figure \ref{fig:circleInterface} using (a) conforming finite element 
and (b) discontinuous Galerkin. 
Approximated shape gradient and goal-oriented estimator for different meshes.}
\label{tab:circleConv}
\end{table}

The aforementioned remarks are confirmed and highlighted by the test case in figure \ref{fig:ellipseInterface}. 
On the one hand, it is straightforward to observe that the certified descent algorithm is able to identify a genuine descent direction at each iteration (Fig. \ref{fig:ellipseObj}).
Nevertheless, as extensively discussed in \cite{giacomini:hal-01201914}, the well-known ill-posedness of the electrical impedance tomography problem prevents the certified descent algorithm - and in general, gradient-based strategies - to accurately identify the inclusion in the whole domain.
In figure \ref{fig:ellipseInterface} we observe that the portion of the interface closest to $\partial\mathcal{D}$ is well identified but the precision of the reconstruction decreases moving away from the external boundary and towards the inner part of the domain.
These observations match the results in \cite{161105272}, where the authors remark that a good approximation of the boundary is obtained for the upwind part of the shape whereas a loss of accuracy is observed in its downwind part. 
We remark that the perimeter regularization exploited in the aforementioned work is substituted in the present research by the two-mesh strategy as discussed at the beginning of section \ref{ref:numerics}.
A possible workaround to the low resolution of the reconstruction in the center of the computational domain is represented by the emerging field of hybrid imaging in which classical tomography techniques are coupled with acoustic or elastic waves \cite{doi:10.1137/120863654}.
Nevertheless, these limitations are related to the nature of the electrical impedance tomography problem, especially to the decreasing importance of the boundary conditions when moving away from $\partial\mathcal{D}$ and we cannot expect gradient-based strategies to successfully overcome this issue.
These remarks are confirmed again by the rapidly exploding number of degrees of freedom required by the algorithm to certify the descent direction (Fig. \ref{fig:ellipseDOF}).
The non-feasibility of gradient-based approaches for severely ill-posed problems is testified by the huge precision required by the algorithm which only leads to a negligible improvement of the solution.
\begin{figure}[htbp]
    \subfloat[Reconstructed interface.]
    {
    \includegraphics[width=0.28 \columnwidth]{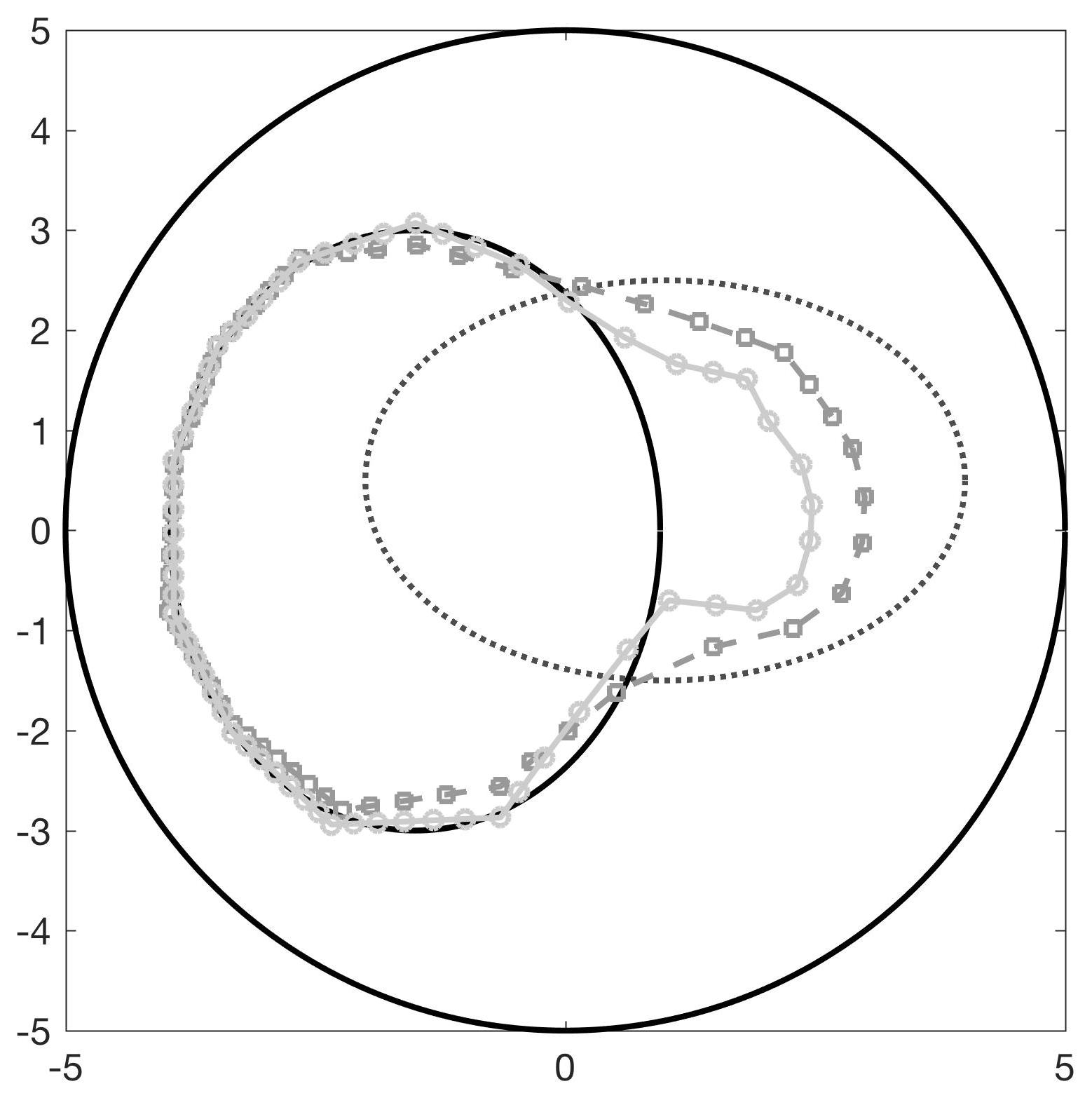}
    \vspace{10pt}
    \label{fig:ellipseInterface}
    }
    \hfil
    \subfloat[Objective functional.]
    {
    \includegraphics[width=0.33 \columnwidth]{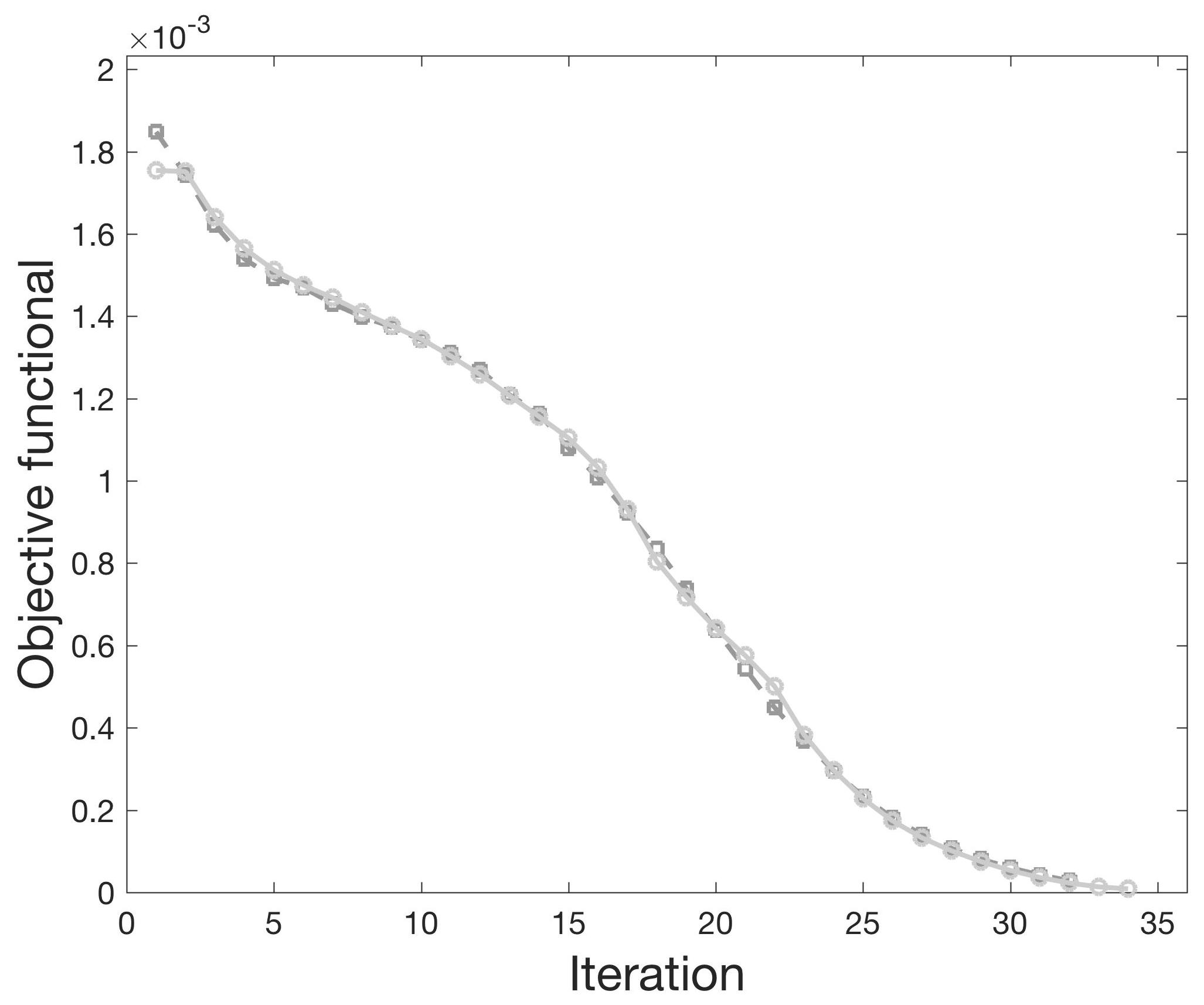}
    \label{fig:ellipseObj}
    }
    \hfil
    \subfloat[Degrees of freedom.]
    {
    \includegraphics[width=0.33 \columnwidth]{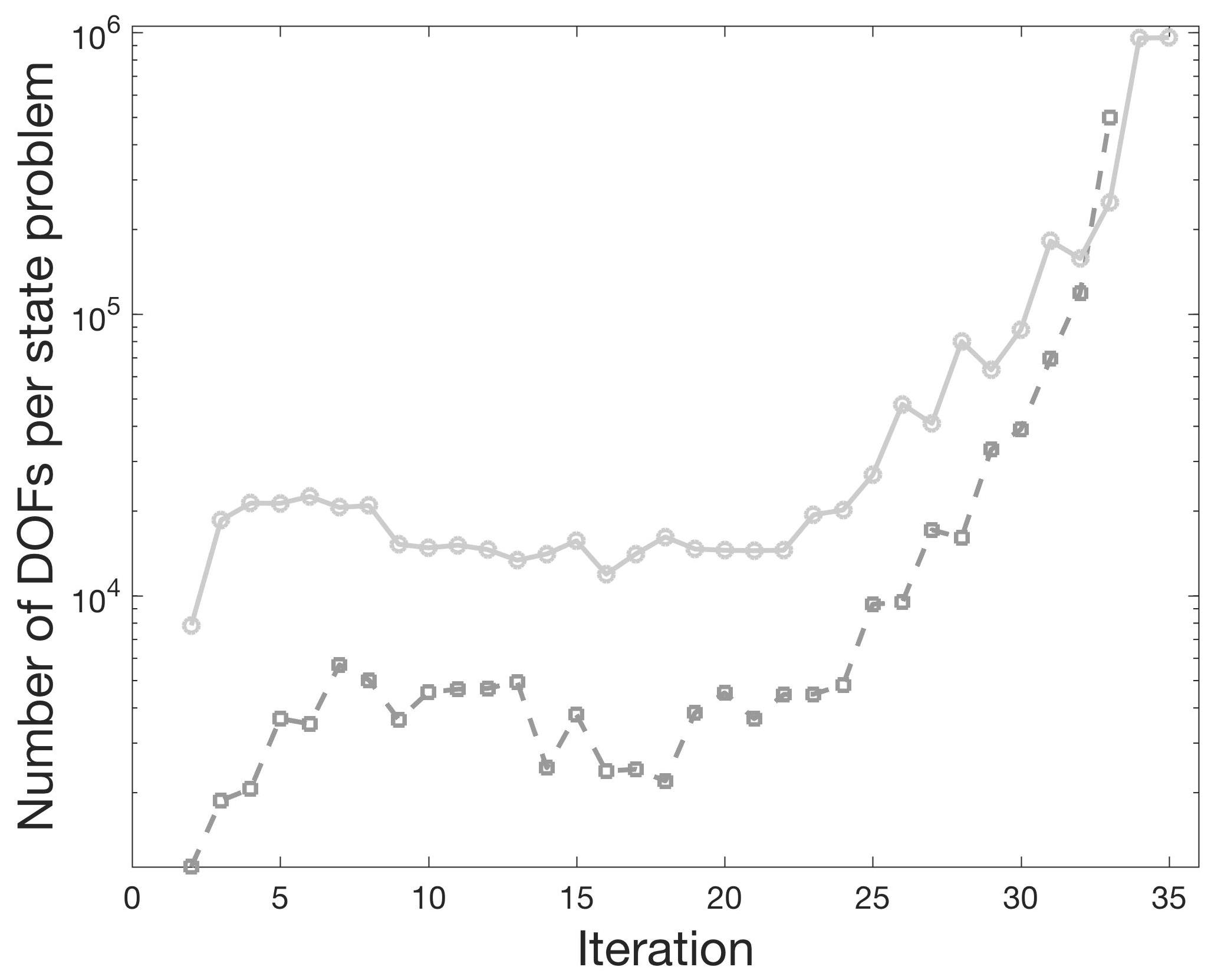}
    \label{fig:ellipseDOF}
    }
    \caption{Certified descent algorithm for the identification of one inclusion. 
    (a) Initial configuration (dotted black), target inclusion (solid black) and 
    reconstructed interface. (b) Evolution of the objective functional. 
    (c) Number of degrees of freedom. 
    Inversion performed using conforming finite element (dark gray squares) and 
    discontinuous Galerkin (light gray circles).}
    \label{fig:ellipse}
\end{figure}    
\begin{figure}[hbtp]
	\centering
    \subfloat[Iteration \#10.]
    {
    \includegraphics[width=0.35 \columnwidth]{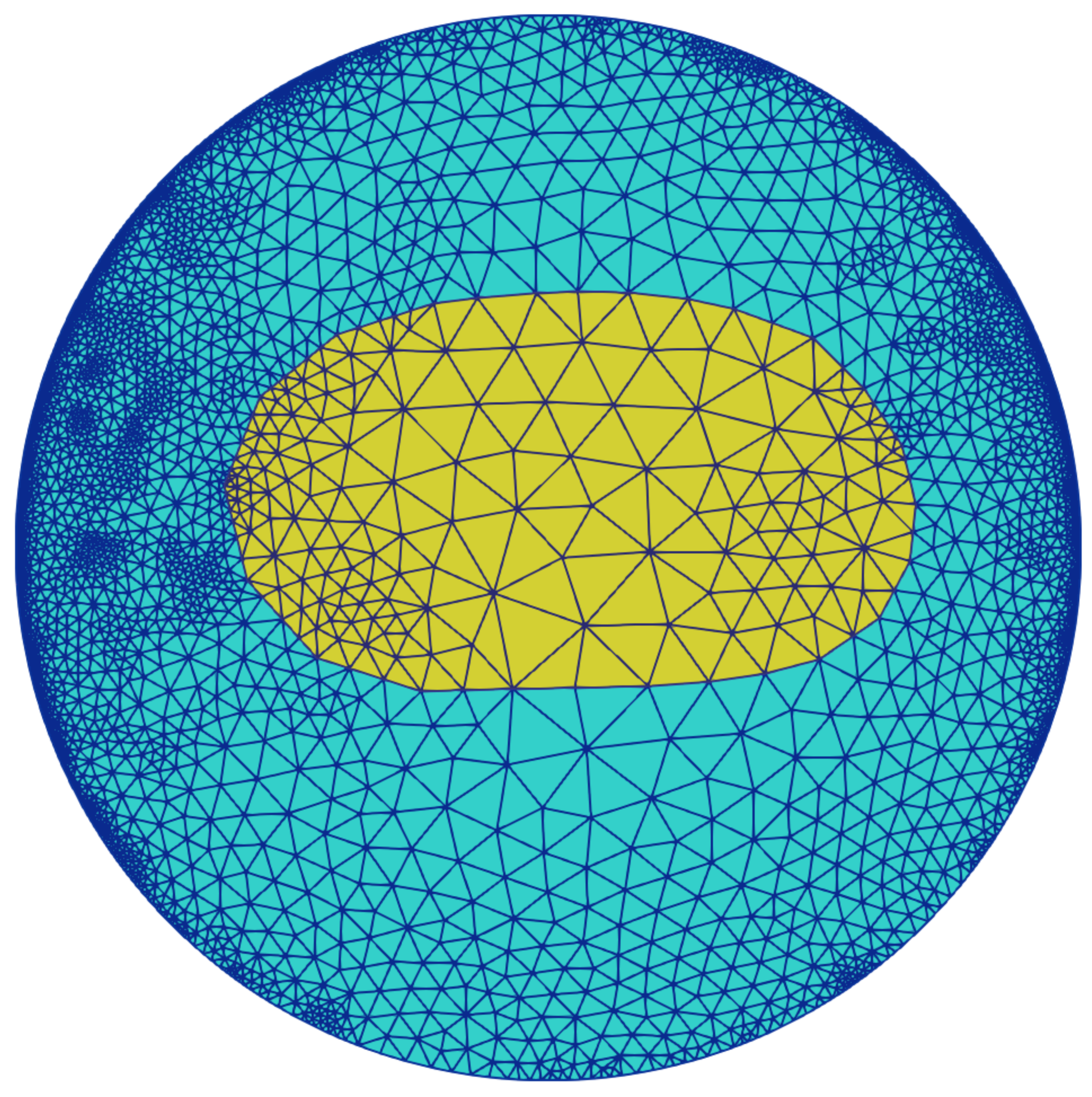}
    \vspace{10pt}
    \label{fig:meshEllipse10FE}
    }
    \hfil
    \subfloat[Iteration \#30.]
    {
    \includegraphics[width=0.35 \columnwidth]{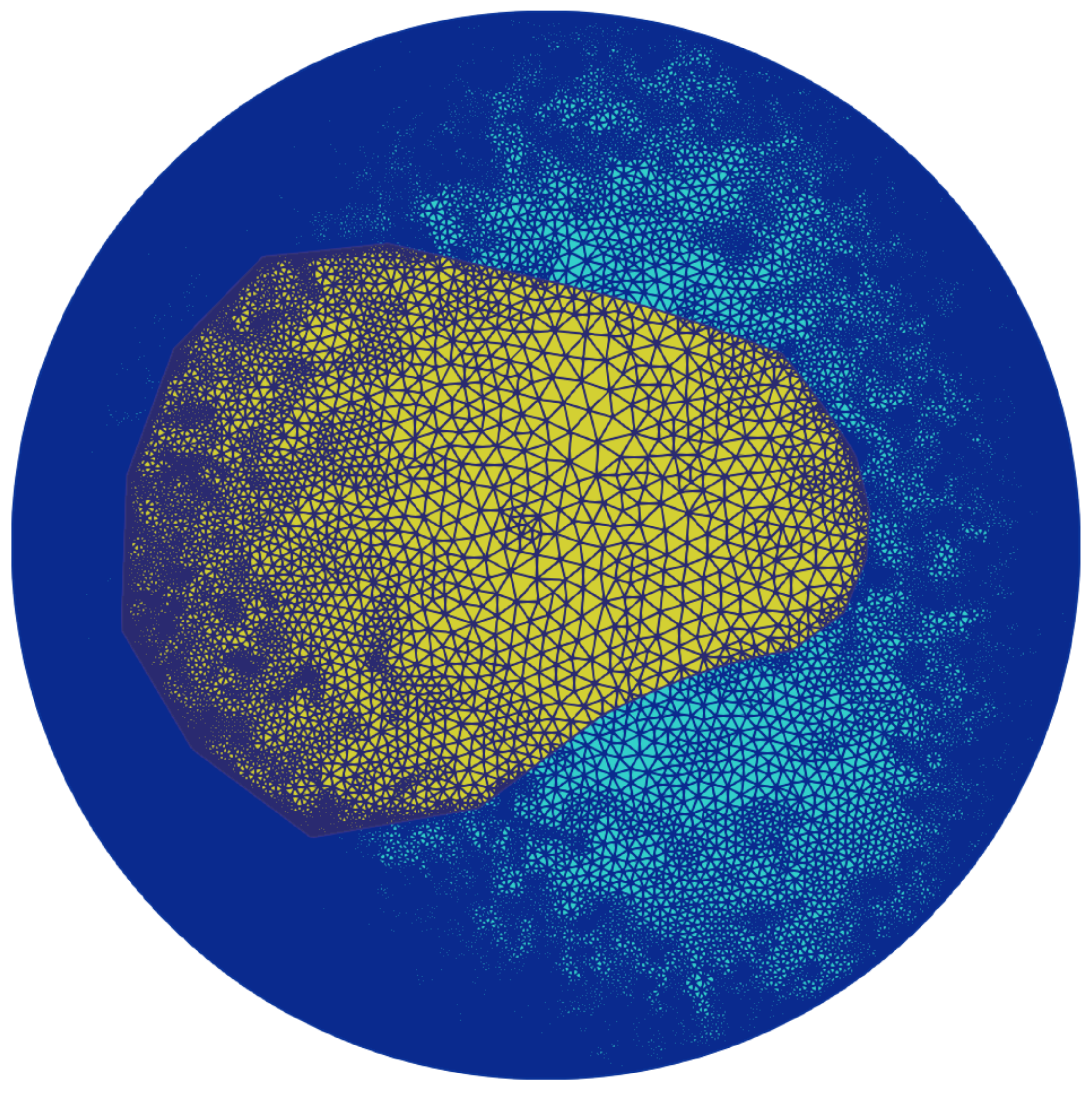}
    \label{fig:meshEllipse30FE}
    }
    
    \subfloat[Iteration \#10.]
    {
    \includegraphics[width=0.35 \columnwidth]{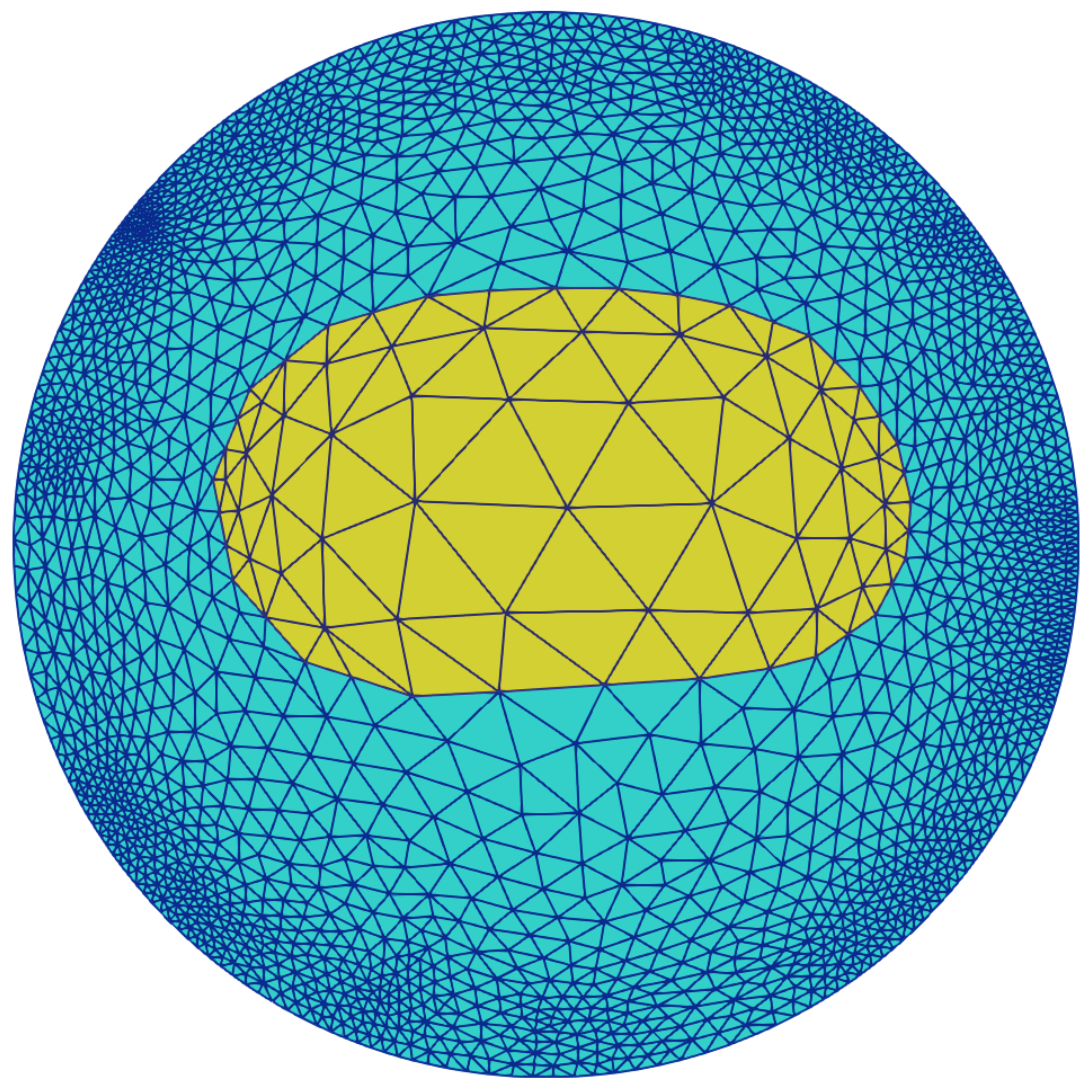}
    \vspace{10pt}
    \label{fig:meshEllipse10DG}
    }
    \hfil
    \subfloat[Iteration \#30.]
    {
    \includegraphics[width=0.35 \columnwidth]{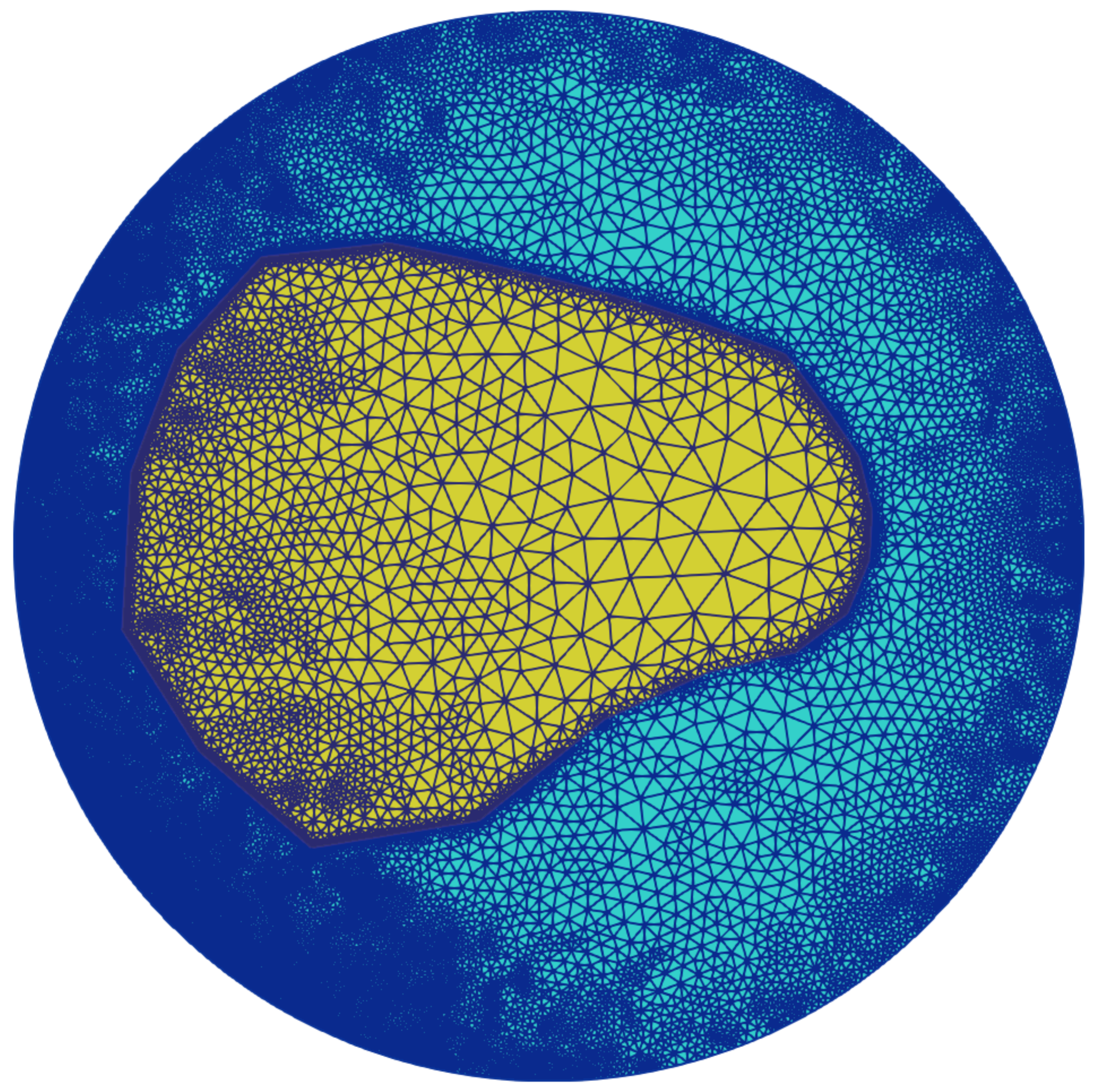}
    \label{fig:meshEllipse30DG}
    }
    \caption{Meshes generated by the certified descent algorithm for the test case in figure \ref{fig:ellipseInterface}.
    Top: conforming finite element. Bottom: discontinuous Galerkin.}
    \label{fig:meshEllipse}
\end{figure}
\begin{table}[htb]
    \centering
    \subfloat[Conforming finite element.]
    {
    \centering
    \begin{tabular}[hbt]{| c | c || l | l |}
    \hline
    Iteration & $\# \mathcal{T}_h$ & $\langle d_h J(\Omega),\boldsymbol\theta^h \rangle$ & $\overline{E}$ \\
    \hline & & & 
    \\ [-1em] \hline
    1 & 3366 & $-2.29 \cdot 10^{-4}$ & $1.79 \cdot 10^{-4}$ \\
    \hline
    10 & 8312 & $-7.63 \cdot 10^{-5}$ & $5.49 \cdot 10 ^{-5}$ \\
    \hline
    20 & 7893 & $-1.29 \cdot 10^{-4}$  & $1.03 \cdot 10 ^{-4}$ \\
    \hline
    30 & 227847 & $-6.16 \cdot 10^{-6}$ & $6.15 \cdot 10^{-6}$ \\
    \hline
    31 & 980555 & $-3.62 \cdot 10^{-6}$ & $3.60 \cdot 10^{-6}$ \\
    \hline
    \end{tabular}
    \label{tab:ellipseConvFE}
    }
    \hfil
    \subfloat[Discontinuous Galerkin.]
    {
    \centering
    \begin{tabular}[hbt]{| c | c || l | l |}
    \hline
    Iteration & $\# \mathcal{T}_h$ & $\langle d_h J(\Omega),\boldsymbol\theta^h \rangle$ & $\overline{E}$ \\
    \hline & & & 
    \\ [-1em] \hline
    1 & 6189 & $-2.21 \cdot 10^{-4}$ & $1.34 \cdot 10^{-4}$ \\
    \hline
    10 & 4868 & $-7.80 \cdot 10^{-5}$ & $4.64 \cdot 10^{-5}$ \\
    \hline
    20 & 4842 & $-1.62 \cdot 10^{-4}$ & $1.30 \cdot 10^{-4}$ \\
    \hline
    30 & 52595 & $-4.71 \cdot 10^{-6}$ & $4.60 \cdot 10^{-6}$ \\
    \hline
    33 & 320137 & $-2.54 \cdot 10^{-7}$ & $2.25 \cdot 10^{-7}$ \\
    \hline
    \end{tabular}
    \label{tab:ellipseConvDG}
    }
\caption{Test case in figure \ref{fig:ellipseInterface} using (a) conforming finite element 
and (b) discontinuous Galerkin. 
Approximated shape gradient and goal-oriented estimator for different meshes.}
\label{tab:ellipseConv}
\end{table}
\\
Though both the version of the CDA based on conforming finite element and the one relying on discontinuous Galerkin are able to certify the descent direction at the beginning of the algorithm, the situation changes after few tens of iterations. 
In particular, the SWIP-dG formulation allows the computation of inexpensive and precise bounds of the error in the shape gradient, whereas using conforming finite element the computational cost rapidly becomes enormous making the certification procedure unfeasible (Table \ref{tab:ellipseConv}).
The previous observation is also qualitatively confirmed by the meshes in figure \ref{fig:meshEllipse}. 
As a matter of fact, despite being similar near the interface on the left-hand side of the domain, the two meshes greatly differ in the right-hand portion and inside the inclusion: within these regions, the mesh generated by the CDA using the SWIP-dG approximation features a lower number of elements, mainly concentrated near the interface where the discontinuity of $k_\Omega$ is located.

\subsection{The case of two inclusions featuring multiple boundary measurements}
\label{ref:2incl}

In this section, we present a more involved test case in which the domain $\mathcal{D}$ features two non-connected inclusions. 
As previously stated, we assume that the number of inclusions is set \emph{a priori} and we restrict to the case of a two-valued conductivity parameter, that is we distinguish a value $k_E$ for the background and a value $k_I$ valid inside both the inclusions.

It is well-known in the literature that multiple boundary measurements are required to retrieve a correct approximation of the inclusion in electrical impedance tomography.
In this section, we consider $D=10$ measurements such that $\forall j=0,\ldots,D-1$
$$
g_j(x,y) = (x+ a_j y)^{b_j} a_j^{c_j} \quad , \quad a_j=1+0.1j \quad ,  \quad 
b_j=\frac{j+1}{2} \quad , \quad c_j=j - 2 \left\lfloor \frac{j}{2} \right\rfloor .
$$
\begin{figure}[htbp]
    \subfloat[Reconstructed interface.]
    {
    \includegraphics[width=0.28 \columnwidth]{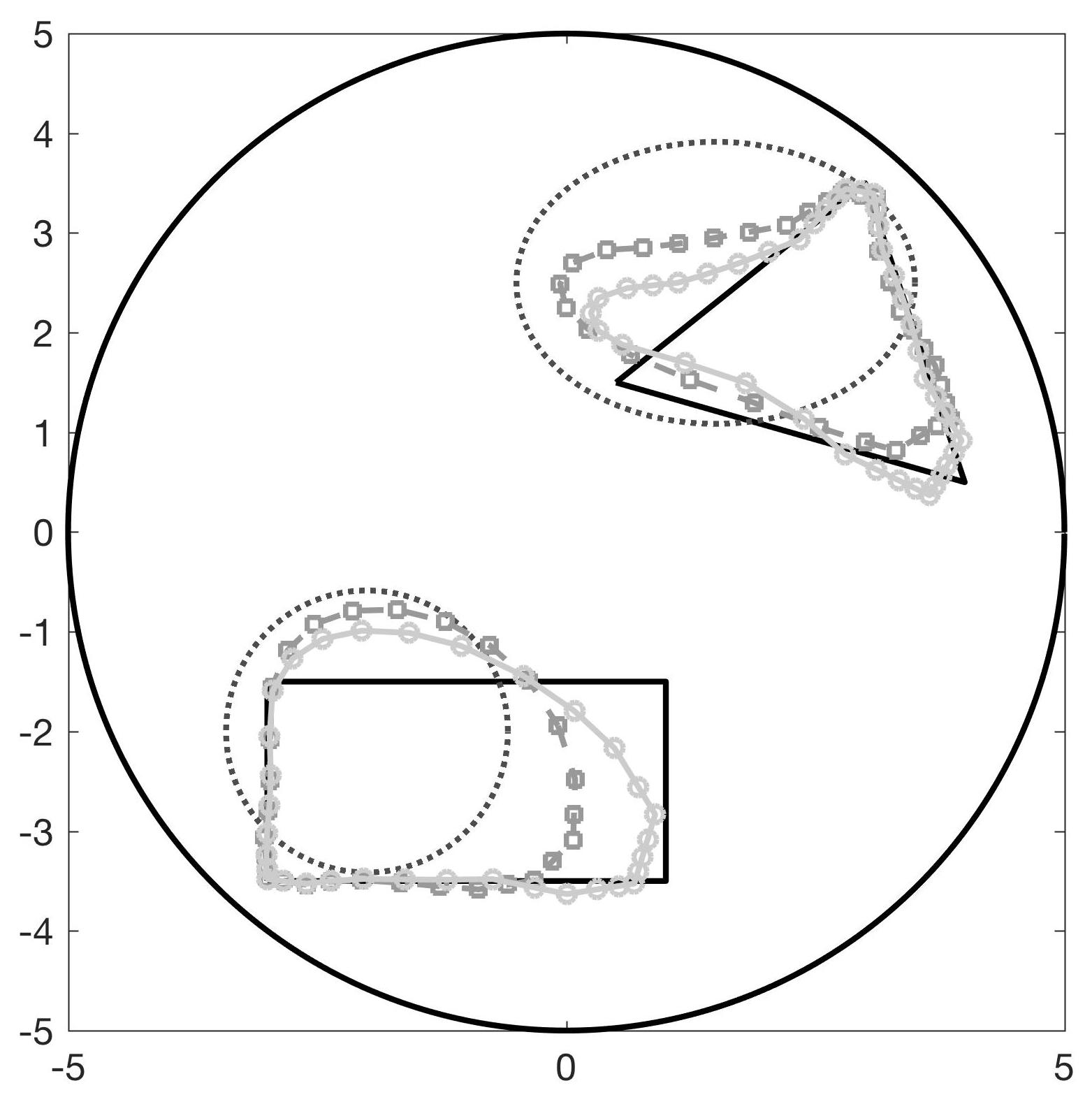}
    \vspace{10pt}
    \label{fig:discInterface}
    }
    \hfil
    \subfloat[Objective functional.]
    {
    \includegraphics[width=0.33 \columnwidth]{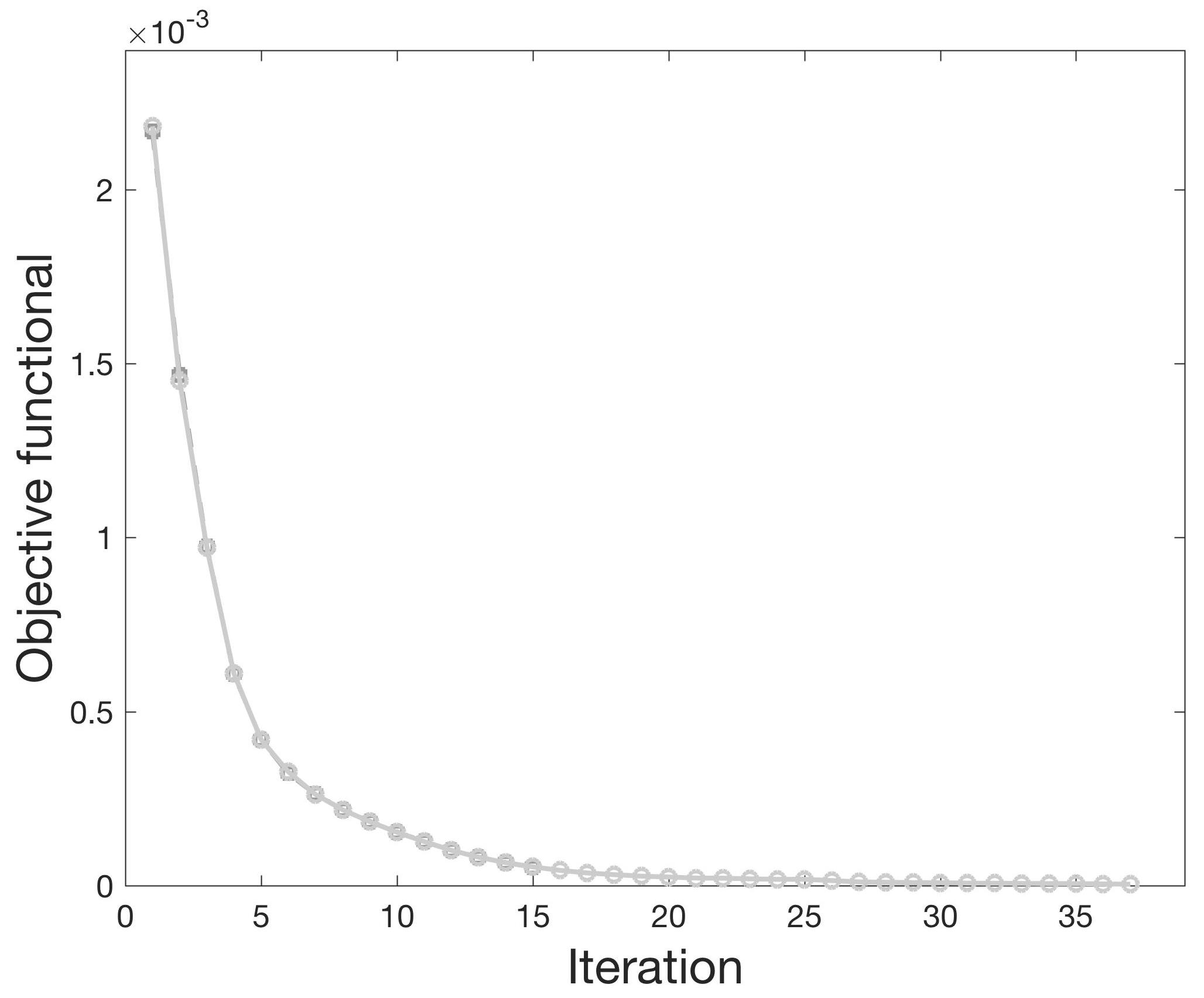}
    \label{fig:discObj}
    }
    \hfil
    \subfloat[Degrees of freedom.]
    {
    \includegraphics[width=0.33 \columnwidth]{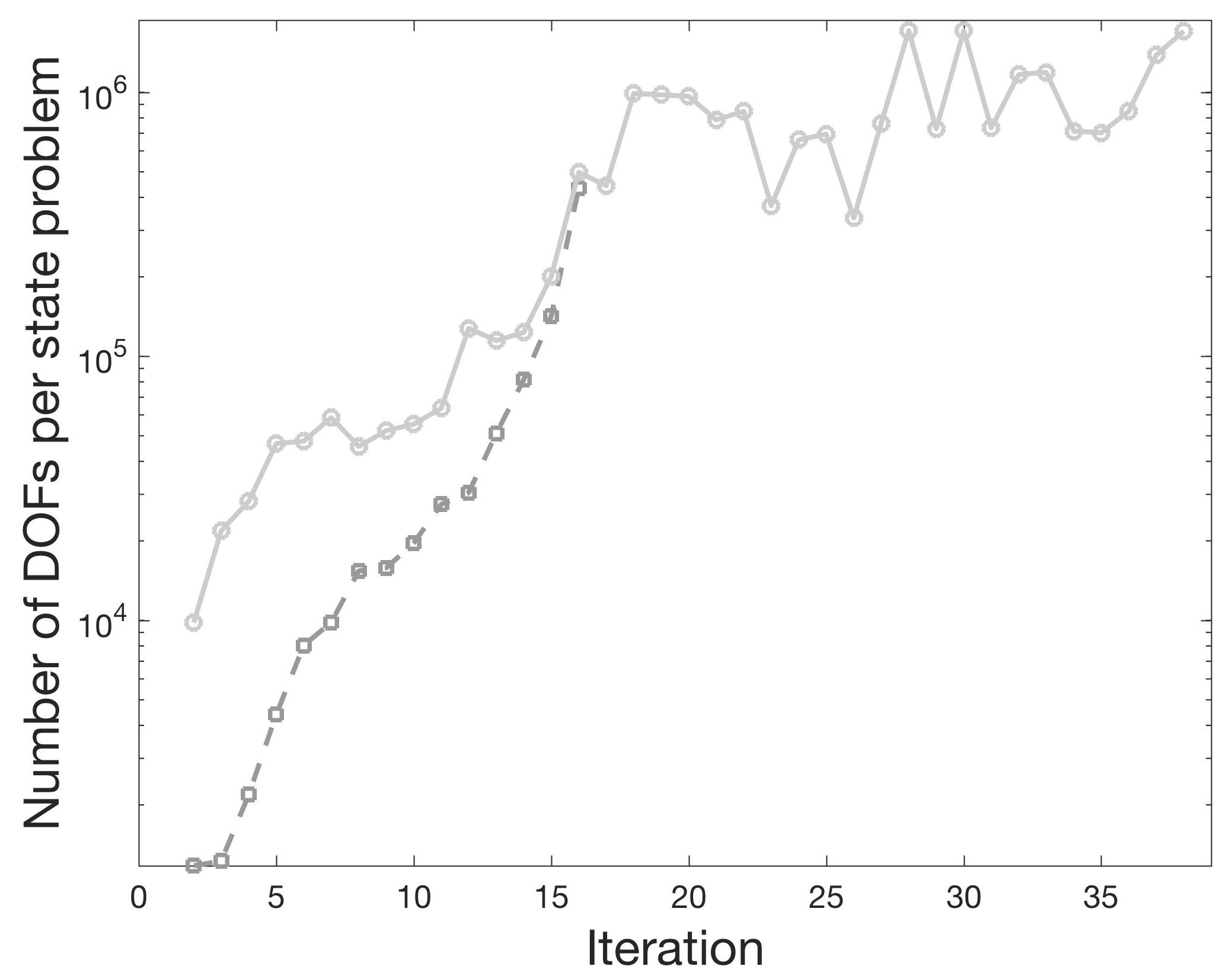}
    \label{fig:discDOF}
    }
    \caption{Certified descent algorithm for the identification of two inclusions. 
    (a) Initial configuration (dotted black), target inclusion (solid black) and 
    reconstructed interface. (b) Evolution of the objective functional. 
    (c) Number of degrees of freedom. 
    Inversion performed using conforming finite element (dark gray squares) and 
    discontinuous Galerkin (light gray circles).}
    \label{fig:disc}
\end{figure}
\begin{figure}[htbp]
	\centering
    \subfloat[Iteration \#10.]
    {
    \includegraphics[width=0.35 \columnwidth]{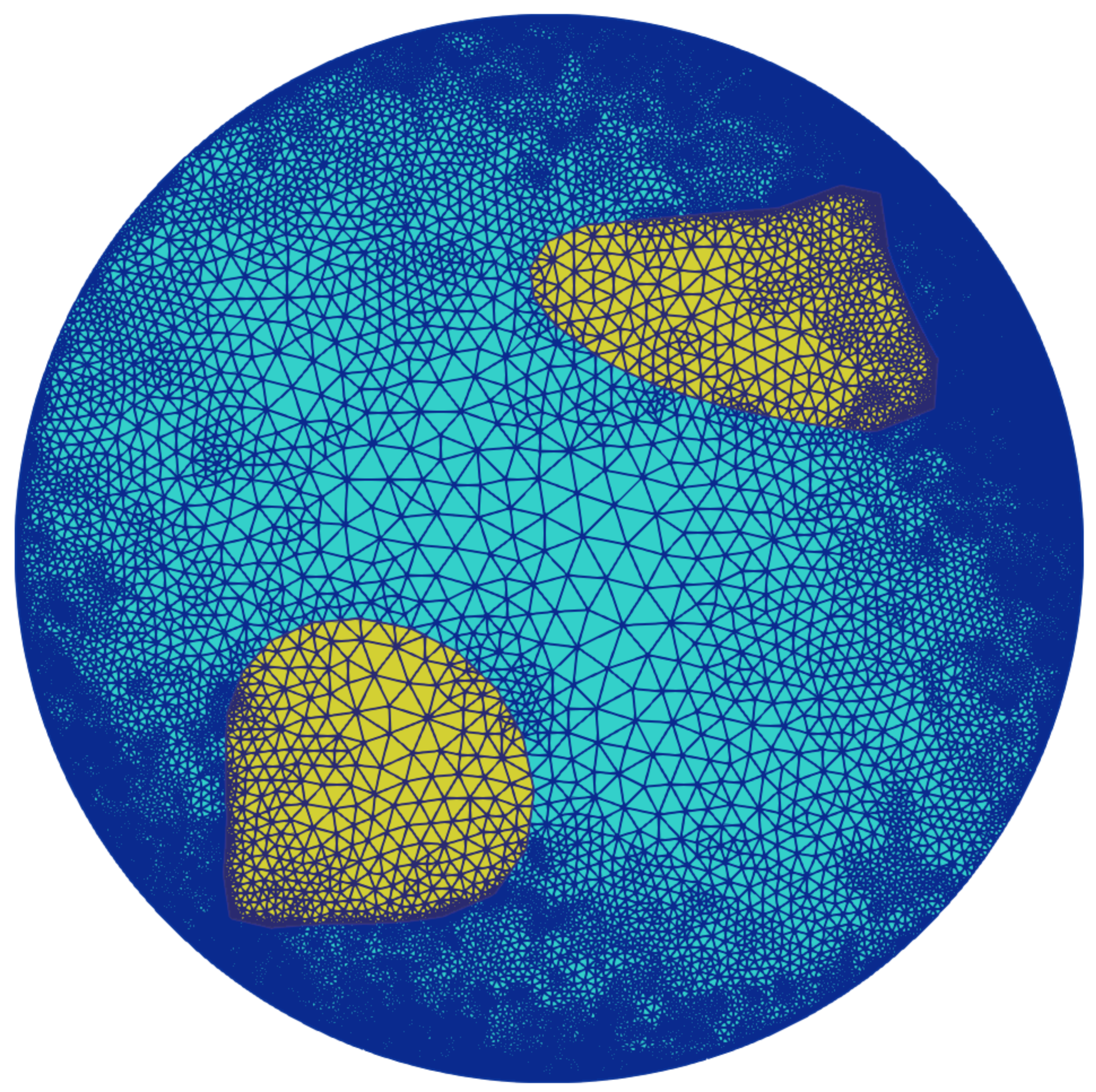}
    \vspace{10pt}
    \label{fig:meshDisc10FE}
    }
    \hfil
    \subfloat[Iteration \#14.]
    {
    \includegraphics[width=0.35 \columnwidth]{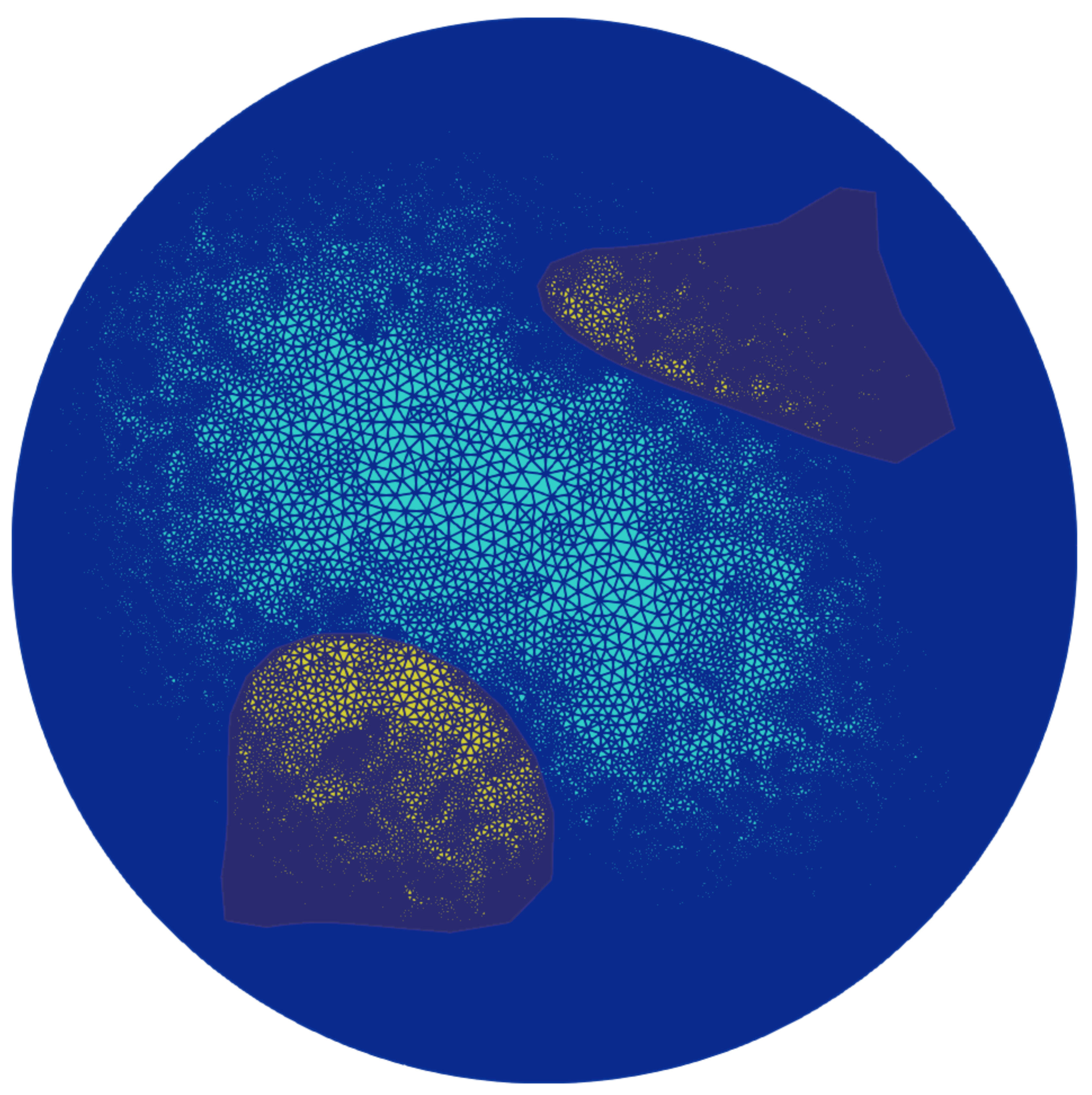}
    \label{fig:meshDisc14FE}
    }
    
    \subfloat[Iteration \#10.]
    {
    \includegraphics[width=0.35 \columnwidth]{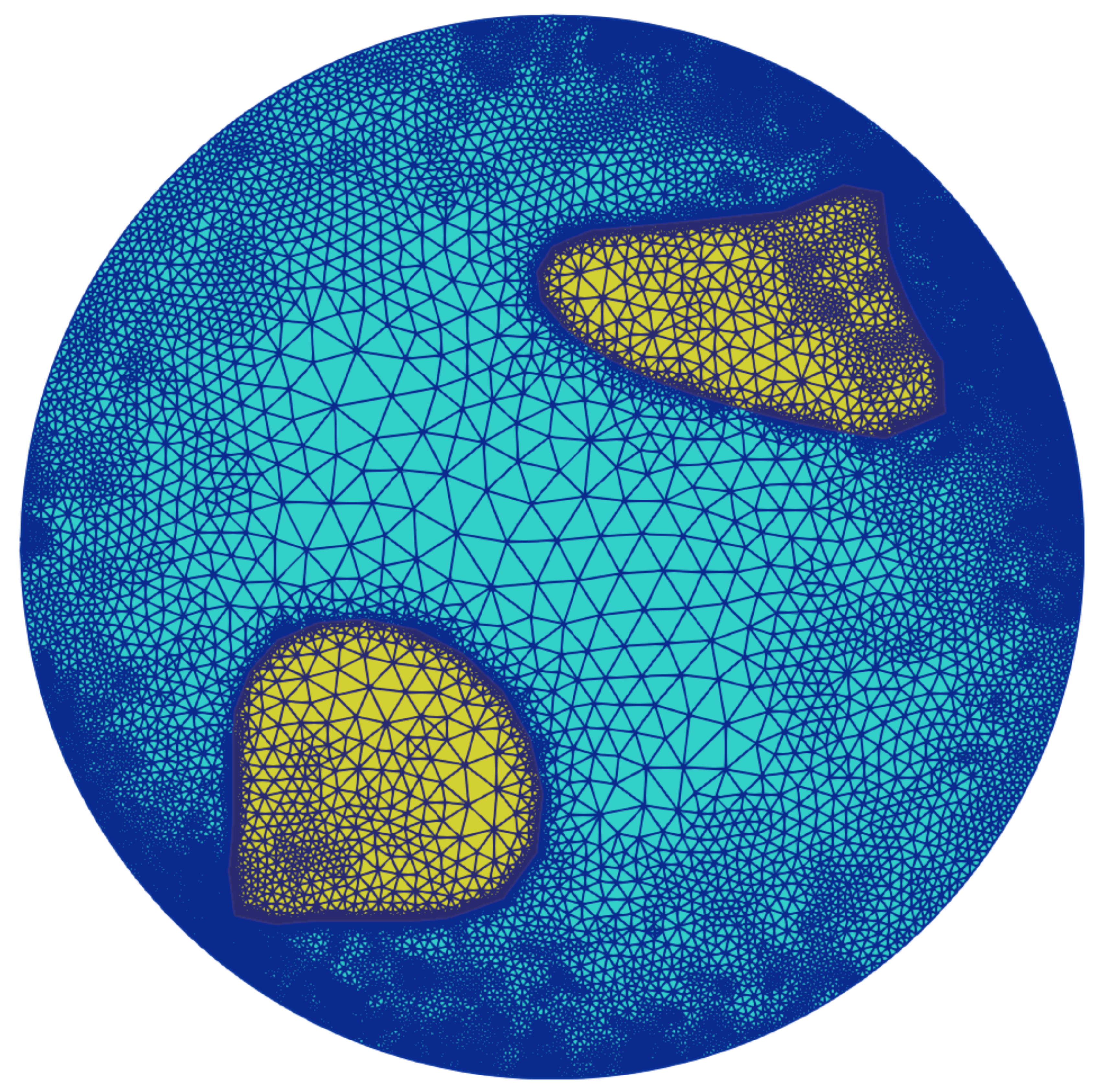}
    \vspace{10pt}
    \label{fig:meshDisc10DG}
    }
    \hfil
    \subfloat[Iteration \#30.]
    {
    \includegraphics[width=0.35 \columnwidth]{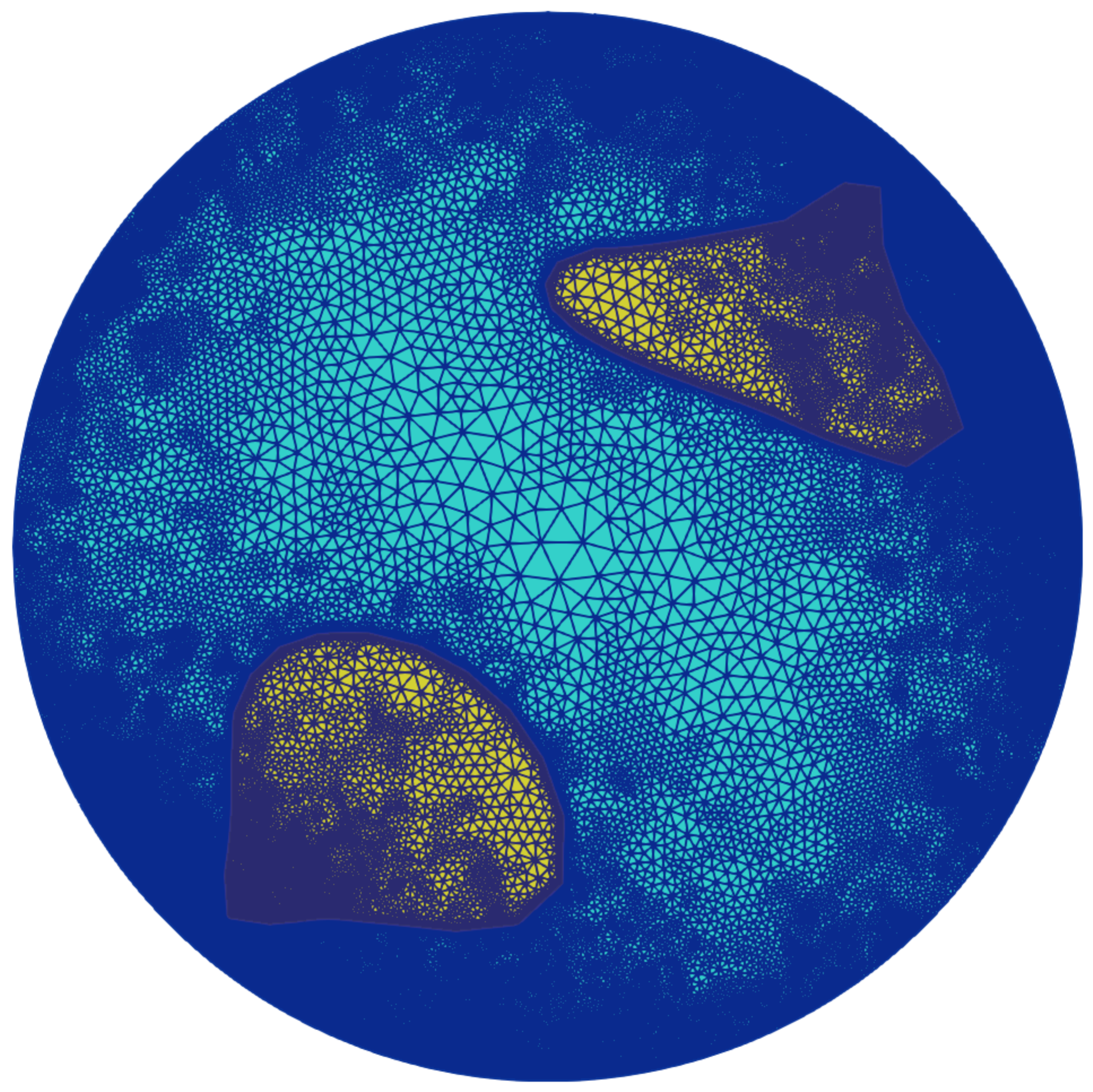}
    \label{fig:meshDisc30DG}
    }
    \caption{Meshes generated by the certified descent algorithm for the test case in figure \ref{fig:discInterface}.
    Top: conforming finite element. Bottom: discontinuous Galerkin.}
    \label{fig:meshDisc}
\end{figure}
\begin{table}[htb]
    \centering
    \subfloat[Conforming finite element.]
    {
    \centering
    \begin{tabular}[hbt]{| c | c || l | l |}
    \hline
    Iteration & $\# \mathcal{T}_h$ & $\langle d_h J(\Omega),\boldsymbol\theta^h \rangle$ & $\overline{E}$ \\
    \hline & & & 
    \\ [-1em] \hline
    1 & 2221 & $-1.62 \cdot 10^{-3}$ & $1.59 \cdot 10^{-3}$ \\
    \hline
    10 & 56487 & $-1.09 \cdot 10^{-5}$ & $1.04 \cdot 10^{-5}$ \\
    \hline
    14 & 852782 & $-3.36 \cdot 10^{-6}$ & $3.21 \cdot 10^{-6}$ \\
    \hline
    \end{tabular}
    \label{tab:discConvFE}
    }
    \hfil
    \subfloat[Discontinuous Galerkin.]
    {
    \centering
    \begin{tabular}[hbt]{| c | c || l | l |}
    \hline
    Iteration & $\# \mathcal{T}_h$ & $\langle d_h J(\Omega),\boldsymbol\theta^h \rangle$ & $\overline{E}$ \\
    \hline & & & 
    \\ [-1em] \hline
    1 & 7282 & $-1.59 \cdot 10^{-3}$ & $1.20 \cdot 10^{-3}$ \\
    \hline
    10 & 42564 & $-1.10 \cdot 10^{-5}$ & $7.27 \cdot 10^{-6}$ \\
    \hline
    20 & 282718 & $-3.82 \cdot 10^{-6}$ & $2.61 \cdot 10^{-6}$ \\
    \hline
    30 & 389571 & $-1.40 \cdot 10^{-6}$ & $8.74 \cdot 10^{-7}$ \\
    \hline
    36 & 568548 & $-1.92 \cdot 10^{-7}$ & $1.87 \cdot 10^{-7}$ \\
    \hline    
    \end{tabular}
    \label{tab:discConvDG}
    }
\caption{Test case in figure \ref{fig:discInterface} using (a) conforming finite element 
and (b) discontinuous Galerkin. 
Approximated shape gradient and goal-oriented estimator for different meshes.}
\label{tab:discConv}
\end{table}
\\
As previously remarked, the certified descent algorithm is able to identify the portions of the interfaces that lie near the external boundary $\partial\mathcal{D}$ whereas the inner parts suffer from a poor reconstruction (Fig. \ref{fig:discInterface}).
Moreover, also in this case after few tens of iterations, the certification procedure requires a huge number of degrees of freedom to identify a genuine descent direction for the objective functional $J(\Omega)$ (Fig. \ref{fig:discDOF}).
Both the inability of the method to reconstruct the interface far from the external boundary and the rapidly increasing number of degrees of freedom required to certify the descent direction clearly testifies the limitations of classical gradient-based approaches when dealing with electrical impedance tomography. \\
Nevertheless, this new variant of the certified descent algorithm proves to be able to certify the descent direction in order to construct a minimizing sequence of shapes for which the objective functional is monotonically decreasing (Fig. \ref{fig:discObj}). 
Moreover, the quantitative information carried by the error bound $\overline{E}$ allows to derive a reliable stopping criterion that automatizes the overall optimization procedure.
\begin{rmrk}
The tables presented in this section show that the strategy based on a conforming finite element discretization rapidly requires a huge number of mesh elements to perform the certification of the descent direction. 
In figure \ref{fig:meshDisc14FE}, we report the mesh after $14$ iterations of the CDA and we remark that the algorithm is stopped due to its excessive computational cost. 
On the contrary, the version of the CDA relying on the discontinuous Galerkin approximation is able to certify the descent direction using a coarser mesh and refining it near the external boundary and in the region where the inclusion is located (Fig. \ref{fig:meshDisc10DG}-\ref{fig:meshDisc30DG}).
However, it is important to recall that the discontinuous Galerkin formulations feature a higher number of degrees of freedom per mesh element, making the overall dimensions of the optimization problems comparable.
Nevertheless, from a practical point of view the computation of the error bound $\overline{E}$ in the framework of conforming finite element relies on the solution of a number of local subproblems on patches of elements equal to the number of vertices of the triangulation $\mathcal{T}_h$. 
On the contrary, the discontinuous Galerkin discretization is locally conservative and yields to a straightforward technique to construct the equilibrated fluxes based on an inexpensive local post-process of the solutions of the state and adjoint problems. 
Thus, both approaches result valid and present an improvement of the original certified descent algorithm introduced in \cite{giacomini:hal-01201914} which required the solution of additional global problems to perform the certification procedure.
Nevertheless, the computational cost of the version based on the discontinuous Galerkin formulation appears more competitive, especially in view of future developments focusing on vectorial and three-dimensional problems.
\end{rmrk}

\section{Conclusion}
\label{ref:conclusion}

As already pointed out in \cite{giacomini:hal-01201914}, the certified descent algorithm (CDA) for shape optimization uses the quantitative information of the goal-oriented estimator to construct on the one hand a sequence of shapes leading to a monotonically decreasing evolution of the objective functional and on the other hand a novel stopping criterion for the overall optimization procedure. 
The main drawback of the aforementioned strategy was the high computational cost due to the solution of additional global variational problems to estimate the error in the shape gradient via the complementary energy principle. \\
In this work, we proposed an improved version of the CDA which uses solely local quantities to certify that the computed direction is a genuine descent direction for the functional under analysis. 
In particular, we derived a goal-oriented estimator of the error in the shape gradient via the construction of equilibrated fluxes. 
This approach has been developed for both conforming finite element and discontinuous Galerkin discretizations and has been tested on the scalar inverse problem of electrical impedance tomography.
On the one hand, using a conforming finite element discretization, the number of degrees of freedom required by the approximation of the state and adjoint problems is small but the construction of the equilibrated fluxes for the estimator of the error in the shape gradient requires the solution of local subproblems defined on patches of elements whose number is equal to the number of vertices of the triangulation.
On the other hand, though the discontinuous Galerkin formulation of the problems features a higher number of degrees of freedom per mesh element, the computation of the error estimator based on the 
equilibrated fluxes approach is straightforward via a post-process which involves solely local quantities.
Both strategies proved to be valid but the bounds provided by the discontinuous Galerkin approach appeared more precise and computationally less expensive.

Ongoing investigations focus on the application of the certified descent algorithm to the vectorial problem of shape optimization in linear elasticity.

\appendix

\section{Weak imposition of the essential boundary conditions}
\label{ref:essential}

We present a formal derivation of the variational formulation of an elliptic problem featuring weakly-imposed Dirichlet boundary conditions.
The idea of this approach dates back to the classical paper by Nitsche \cite{Nitsche1971} and has been extensively studied in recent years by several authors (cf. e.g. \cite{NME:NME4815} and references therein).
We recall that the solution of a boundary value problem may be interpreted as an optimization problem. Let us introduce the Lagrangian functional associated with the state problem \eqref{eq:statePB} featuring Dirichlet boundary conditions:
\begin{equation}
\Lambda(w,\lambda) = \frac{1}{2} \int_\mathcal{D}{\Big(k_\Omega | \nabla w |^2 + |w|^2 \Big) d\mathbf{x}} - \int_{\partial\mathcal{D}}{\lambda(w-U_D) ds} .
\label{eq:lagrangianWeak} 
\end{equation}
The solution of the aforementioned boundary value problem is equivalent to the following min-max problem: 
$$
\min_{w \in H^1(\mathcal{D})} \max_{\lambda \in H^{-\frac{1}{2}}(\mathcal{D})} \Lambda(w,\lambda) .
$$
The first-order optimality conditions for \eqref{eq:lagrangianWeak} read as
\begin{align*}
\left\{
\begin{aligned}
& \int_\mathcal{D}{\Big( k_\Omega \nabla w \cdot \nabla \delta w + w \delta w \Big) d\mathbf{x}} - \int_{\partial\mathcal{D}}{\lambda \delta w \ ds} = 0 ,\\
& \int_{\partial\mathcal{D}}{(w-U_D) \delta\lambda \ ds} = 0 .
\end{aligned}
\right.
\label{eq:OptimalityEssential}
\end{align*}
From the second condition, we retrieve the Dirichlet boundary condition on $\partial\mathcal{D}$. Integrating by parts the first condition and owing to the strong form of the problem, we obtain
\begin{equation*}
\int_{\partial\mathcal{D}}{(k_\Omega \nabla w \cdot \mathbf{n} - \lambda) \delta w \ ds} = 0 .
\label{eq:lagrangeMult}
\end{equation*}
By plugging $\lambda = k_\Omega \nabla w \cdot \mathbf{n} \ \text{on} \ \partial\mathcal{D}$ into \eqref{eq:lagrangianWeak} we may now derive the following dual variational problem by seeking $w \in H^1(\mathcal{D})$ such that $\forall \delta w \in H^1(\mathcal{D})$
\begin{equation}
\begin{aligned}
\int_\mathcal{D}{\Big( k_\Omega \nabla w \cdot \nabla \delta w + w \delta w \Big) d\mathbf{x}} 
& - \int_{\partial\mathcal{D}}{\Big( k_\Omega \nabla w \cdot \mathbf{n} \delta w + w k_\Omega \nabla \delta w \cdot \mathbf{n} \Big) ds} \\
& \hspace{48pt} = - \int_{\partial\mathcal{D}}{U_D k_\Omega \nabla \delta w \cdot \mathbf{n} \ ds} .
\end{aligned}
\label{eq:weakInstable}
\end{equation}
We remark that the bilinear form on the left-hand side of \eqref{eq:weakInstable} is not coercive thus we cannot establish the well-posedness of this problem.
To bypass this issue, we consider the following augmented Lagrangian functional and we construct the corresponding dual variational formulation for the problem under analysis:
\begin{equation*}
\Upsilon(w,\lambda,\gamma) = \Lambda(w,\lambda) + \frac{1}{2} \int_{\partial\mathcal{D}}{\gamma(w-U_D)^2 ds} .
\label{eq:augLagrangianWeak} 
\end{equation*}
Following the same procedure used to derive \eqref{eq:weakInstable}, we seek $w \in H^1(\mathcal{D})$ such that $\forall \delta w \in H^1(\mathcal{D})$
\begin{equation}
\begin{aligned}
\int_\mathcal{D}{\Big( k_\Omega \nabla w \cdot \nabla \delta w + w \delta w \Big) d\mathbf{x}} 
- & \int_{\partial\mathcal{D}}{\Big( k_\Omega \nabla w \cdot \mathbf{n} \delta w + w k_\Omega \nabla \delta w \cdot \mathbf{n} \Big) ds} 
+ \int_{\partial\mathcal{D}}{\gamma w \delta w \ ds} \\
& \hspace{73pt} = \int_{\partial\mathcal{D}}{U_D \Big(\gamma \delta w - k_\Omega \nabla \delta w \cdot \mathbf{n} \Big) ds} .
\end{aligned}
\label{eq:weakSable}
\end{equation}
It is straightforward to observe that the bilinear form on the left-hand side of \eqref{eq:weakSable} is coercive owing a \emph{sufficiently large} value of $\gamma$ is chosen.

\subsubsection*{Acknowledgements}
The author expresses his sincere gratitude to Alexandre Ern for the useful advices and to Olivier Pantz for many fruitful discussions and for carefully reading the manuscript.
The author wishes to thank the anonymous reviewers for their comments that helped to greatly improve the manuscript.
Part of this work has been developed during a stay of the author at the Laboratoire J.A. Dieudonn\'e at Universit\'e de Nice-Sophia Antipolis whose support is kindly acknowledged.

\bibliographystyle{abbrv}
\bibliography{./newBibliography}   

\begin{thebibliography}{10}

\bibitem{MR2407028}
L.~Afraites, M.~Dambrine, and D.~Kateb.
\newblock On second order shape optimization methods for electrical impedance
  tomography.
\newblock {\em SIAM J. Control Optim.}, 47(3):1556--1590, 2008.

\bibitem{doi:10.1137/060665993}
M.~Ainsworth.
\newblock {A Posteriori Error Estimation for Discontinuous Galerkin Finite
  Element Approximation}.
\newblock {\em SIAM J. Numer. Anal.}, 45(4):1777--1798, 2007.

\bibitem{ainsworth2000posteriori}
M.~Ainsworth and J.~Oden.
\newblock {\em {A Posteriori Error Estimation in Finite Element Analysis}}.
\newblock A Wiley-Interscience publication. Wiley, 2000.

\bibitem{MR2899560}
M.~Ainsworth and R.~Rankin.
\newblock Guaranteed computable bounds on quantities of interest in finite
  element computations.
\newblock {\em Int. J. Numer. Meth. Eng.}, 89(13):1605--1634, 2012.

\bibitem{Alauzet}
F.~Alauzet, B.~Mohammadi, and O.~Pironneau.
\newblock Mesh adaptivity and optimal shape design for aerospace.
\newblock In G.~Buttazzo and A.~Frediani, editors, {\em Variational Analysis
  and Aerospace Engineering: Mathematical Challenges for Aerospace Design},
  Springer Optimization and Its Applications, pages 323--337. Springer US,
  2012.

\bibitem{MR2270119}
G.~Allaire.
\newblock {\em Conception optimale de structures}, volume~58 of {\em
  Math\'ematiques \& Applications}.
\newblock Springer-Verlag, Berlin, 2007.

\bibitem{smo-AP}
G.~Allaire and O.~Pantz.
\newblock Structural optimization with \texttt{FreeFem++}.
\newblock {\em Struct. Multidiscip. O.}, 32(3):173--181, 2006.

\bibitem{doi:10.1137/120863654}
H.~Ammari, E.~Bossy, J.~Garnier, and L.~Seppecher.
\newblock Acousto-electromagnetic tomography.
\newblock {\em SIAM J. Appl. Math.}, 72(5):1592--1617, 2012.

\bibitem{tractionMethod}
H.~Azegami, S.~Kaizu, M.~Shimoda, and E.~Katamine.
\newblock Irregularity of shape optimization problems and an improvement
  technique.
\newblock In S.~Hernandez and C.~Brebbia, editors, {\em Computer Aided
  Optimization Design of Structures V}, pages 309--326. Computational Mechanics
  Publications, 1997.

\bibitem{MeshRef-Banichuk}
N.~V. Banichuk, F.-J. Barthold, A.~Falk, and E.~Stein.
\newblock Mesh refinement for shape optimization.
\newblock {\em Struct. Optimization}, 9:46--51, 1995.

\bibitem{BoffiBrezziFortin}
D.~Boffi, F.~Brezzi, and M.~Fortin.
\newblock {\em Mixed finite element methods and applications}, volume~44 of
  {\em Springer Series in Computational Mathematics}.
\newblock Springer, Heidelberg, 2013.

\bibitem{0266-5611-18-6-201}
L.~Borcea.
\newblock Electrical impedance tomography.
\newblock {\em Inverse Probl.}, 18(6):R99, 2002.

\bibitem{MR3249368}
D.~Braess, T.~Fraunholz, and R.~H.~W. Hoppe.
\newblock An equilibrated a posteriori error estimator for the interior penalty
  discontinuous {G}alerkin method.
\newblock {\em SIAM J. Numer. Anal.}, 52(4):2121--2136, 2014.

\bibitem{MR2373174}
D.~Braess and J.~Sch{\"o}berl.
\newblock Equilibrated residual error estimator for edge elements.
\newblock {\em Math. Comput.}, 77(262):651--672, 2008.

\bibitem{doi:10.1137/050634736}
E.~Burman and P.~Zunino.
\newblock A domain decomposition method based on weighted interior penalties
  for advection-diffusion-reaction problems.
\newblock {\em SIAM J. Numer. Anal.}, 44(4):1612--1638, 2006.

\bibitem{Calderon1980}
A.~Calder\'{o}n.
\newblock On an inverse boundary value problem.
\newblock In {\em Seminar on Numerical Analysis and its Applications to
  Continuum Physics (Rio de Janeiro 1980)}, pages 65--73. Soc. Brasil. Mat.,
  1980.

\bibitem{MR2966180}
A.~Carpio and M.-L. Rap{\'u}n.
\newblock Hybrid topological derivative and gradient-based methods for
  electrical impedance tomography.
\newblock {\em Inverse Probl.}, 28(9):095010, 22, 2012.

\bibitem{MR3018142}
C.~Carstensen and C.~Merdon.
\newblock Effective postprocessing for equilibration a posteriori error
  estimators.
\newblock {\em Numer. Math.}, 123(3):425--459, 2013.

\bibitem{Cea1986}
J.~C{\'e}a.
\newblock Conception optimale ou identification de formes, calcul rapide de la
  d\'{e}riv\'{e}e directionnelle de la fonction co\^{u}t.
\newblock {\em ESAIM: Math. Model. Num.}, 20(3):371--402, 1986.

\bibitem{Cheney99electricalimpedance}
M.~Cheney, D.~Isaacson, and J.~Newell.
\newblock Electrical impedance tomography.
\newblock {\em SIAM Rev.}, 41:85--101, 1999.

\bibitem{MR2132313}
E.~Chung, T.~Chan, and X.-C. Tai.
\newblock Electrical impedance tomography using level set representation and
  total variational regularization.
\newblock {\em J. Comput. Phys.}, 205(1):357--372, 2005.

\bibitem{MR2427189}
S.~Cochez-Dhondt and S.~Nicaise.
\newblock Equilibrated error estimators for discontinuous {G}alerkin methods.
\newblock {\em Numer. Meth. Part. D. E.}, 24(5):1236--1252, 2008.

\bibitem{NME:NME4815}
R.~Codina and J.~Baiges.
\newblock Weak imposition of essential boundary conditions in the finite
  element approximation of elliptic problems with non-matching meshes.
\newblock {\em Int. J. Numer. Meth. Eng.}, 104(7):624--654, 2015.

\bibitem{DBLP:journals/moc/DestuynderM99}
P.~Destuynder and B.~M{\'{e}}tivet.
\newblock Explicit error bounds in a conforming finite element method.
\newblock {\em Math. Comput.}, 68(228):1379--1396, 1999.

\bibitem{MR2882148}
D.~Di~Pietro and A.~Ern.
\newblock {\em Mathematical aspects of discontinuous {G}alerkin methods},
  volume~69 of {\em Math\'ematiques \& Applications}.
\newblock Springer, Heidelberg, 2012.

\bibitem{Dogan20073898}
G.~Dog\v{a}n, P.~Morin, R.~Nochetto, and M.~Verani.
\newblock Discrete gradient flows for shape optimization and applications.
\newblock {\em Comput. Methods Appl. Mech. Engrg.}, 196(37–40):3898 -- 3914,
  2007.
\newblock Special Issue Honoring the 80th Birthday of Professor Ivo Babuška.

\bibitem{MR2211069}
K.~Eppler and H.~Harbrecht.
\newblock A regularized {N}ewton method in electrical impedance tomography
  using shape {H}essian information.
\newblock {\em Control Cybern.}, 34(1):203--225, 2005.

\bibitem{MR2376644}
A.~Ern, S.~Nicaise, and M.~Vohral{\'{\i}}k.
\newblock An accurate {$\bold H({\rm div})$} flux reconstruction for
  discontinuous {G}alerkin approximations of elliptic problems.
\newblock {\em C. R. Acad. Sci. I-Math.}, 345(12):709--712, 2007.

\bibitem{MR2601287}
A.~Ern, A.~Stephansen, and M.~Vohral{\'{\i}}k.
\newblock Guaranteed and robust discontinuous {G}alerkin a posteriori error
  estimates for convection-diffusion-reaction problems.
\newblock {\em J. Comput. Appl. Math.}, 234(1):114--130, 2010.

\bibitem{MR2491426}
A.~Ern, A.~Stephansen, and P.~Zunino.
\newblock A discontinuous {G}alerkin method with weighted averages for
  advection-diffusion equations with locally small and anisotropic diffusivity.
\newblock {\em IMA J. Numer. Anal.}, 29(2):235--256, 2009.

\bibitem{doi:10.1137/130950100}
A.~Ern and M.~Vohral\'ik.
\newblock Polynomial-degree-robust a posteriori estimates in a unified setting
  for conforming, nonconforming, discontinuous galerkin, and mixed
  discretizations.
\newblock {\em SIAM J. Numer. Anal.}, 53(2):1058--1081, 2015.

\bibitem{1742-6596-657-1-012004}
M.~Giacomini, O.~Pantz, and K.~Trabelsi.
\newblock {An a posteriori error estimator for shape optimization: application
  to EIT}.
\newblock {\em J. Phys.: Conf. Ser.}, 657(1):012004, 2015.

\bibitem{giacomini:hal-01201914}
{Giacomini, M.}, {Pantz, O.}, and {Trabelsi, K.}
\newblock Certified descent algorithm for shape optimization driven by
  fully-computable a posteriori error estimators∗.
\newblock {\em {ESAIM: Contr. Op. Ca. Va.}}, 23(3):977--1001, 2017.

\bibitem{2006-nonlin}
T.~Gr\"{a}tsch and K.-J. Bathe.
\newblock Goal-oriented error estimation in the analysis of fluid flows with
  structural interactions.
\newblock {\em Comput. Method. Appl. M.}, 195:5673--5684, 2006.

\bibitem{MR2361907}
R.~Hartmann.
\newblock Adjoint consistency analysis of discontinuous {G}alerkin
  discretizations.
\newblock {\em SIAM J. Numer. Anal.}, 45(6):2671--2696 (electronic), 2007.

\bibitem{MR3043640}
F.~Hecht.
\newblock New development in \texttt{FreeFem++}.
\newblock {\em J. Numer. Math.}, 20(3-4):251--265, 2012.

\bibitem{Hintermuller2008}
M.~Hinterm\"{u}ller and A.~Laurain.
\newblock Electrical impedance tomography: from topology to shape.
\newblock {\em Control Cybern.}, 37(4):913--933, 2008.

\bibitem{MR2886190}
M.~Hinterm{\"u}ller, A.~Laurain, and A.~A. Novotny.
\newblock Second-order topological expansion for electrical impedance
  tomography.
\newblock {\em Adv. Comput. Math.}, 36(2):235--265, 2012.

\bibitem{HPS_bit}
R.~Hiptmair, A.~Paganini, and S.~Sargheini.
\newblock Comparison of approximate shape gradients.
\newblock {\em BIT}, 55(2):459--485, 2015.

\bibitem{holder2004electrical}
D.~Holder.
\newblock {\em Electrical Impedance Tomography: Methods, History and
  Applications}.
\newblock Series in Medical Physics and Biomedical Engineering. CRC Press,
  2004.

\bibitem{MR3019471}
B.~Jin and P.~Maass.
\newblock An analysis of electrical impedance tomography with applications to
  {T}ikhonov regularization.
\newblock {\em ESAIM: Contr. Op. Ca. Va.}, 18(4):1027--1048, 2012.

\bibitem{AFEM-Kikuchi}
N.~Kikuchi, K.~Chung, T.~Torigaki, and J.~Taylor.
\newblock Adaptive {F}inite {E}lement {M}ethods for shape optimization of
  linearly elastic structures.
\newblock {\em Comput. Method. Appl. M.}, 57:67--89, 1986.

\bibitem{MR2261011}
K.~Y. Kim.
\newblock A posteriori error analysis for locally conservative mixed methods.
\newblock {\em Math. Comput.}, 76(257):43--66, 2007.

\bibitem{CPA:CPA3160400605}
R.~Kohn and M.~Vogelius.
\newblock Relaxation of a variational method for impedance computed tomography.
\newblock {\em Commun. Pur. Appl. Math.}, 40(6):745--777, 1987.

\bibitem{LaurainSturm2015}
A.~Laurain and K.~Sturm.
\newblock Distributed shape derivative {\it via} averaged adjoint method and
  applications.
\newblock {\em ESAIM Math. Model. Numer. Anal.}, 50(4):1241--1267, 2016.

\bibitem{doi:10.1137/S0036142903433790}
R.~Luce and B.~I. Wohlmuth.
\newblock A local a posteriori error estimator based on equilibrated fluxes.
\newblock {\em SIAM J. Numer. Anal.}, 42(4):1394--1414, 2004.

\bibitem{COV:8787109}
P.~Morin, R.~Nochetto, M.~Pauletti, and M.~Verani.
\newblock Adaptive finite element method for shape optimization.
\newblock {\em ESAIM: Contr. Op. Ca. Va.}, 18:1122--1149, 2012.

\bibitem{MR3327021}
I.~Mozolevski and S.~Prudhomme.
\newblock Goal-oriented error estimation based on equilibrated-flux
  reconstruction for finite element approximations of elliptic problems.
\newblock {\em Comput. Method. Appl. M.}, 288:127--145, 2015.

\bibitem{Nicaise01042008}
S.~Nicaise, K.~Witowski, and B.~Wohlmuth.
\newblock An a posteriori error estimator for the lamé equation based on
  equilibrated fluxes.
\newblock {\em IMA J. Numer. Anal.}, 28(2):331--353, 2008.

\bibitem{Nitsche1971}
J.~Nitsche.
\newblock {\"U}ber ein {V}ariationsprinzip zur {L}{\"o}sung von
  {D}irichlet-{P}roblemen bei {V}erwendung von {T}eilr{\"a}umen, die keinen
  {R}andbedingungen unterworfen sind.
\newblock {\em {A}bhandlungen aus dem {M}athematischen {S}eminar der
  {U}niversit{\"a}t {H}amburg}, 36(1):9--15, 1971.

\bibitem{nochetto2009}
R.~Nochetto, K.~Siebert, and A.~Veeser.
\newblock Theory of adaptive finite element methods: An introduction.
\newblock In R.~DeVore and A.~Kunoth, editors, {\em Multiscale, Nonlinear and
  Adaptive Approximation}, pages 409--542. Springer Berlin Heidelberg, 2009.

\bibitem{Oden2001735}
J.~Oden and S.~Prudhomme.
\newblock Goal-oriented error estimation and adaptivity for the {F}inite
  {E}lement {M}ethod.
\newblock {\em Comput. Math. Appl.}, 41(5-6):735 -- 756, 2001.

\bibitem{MR3467382}
A.~Paganini.
\newblock Approximate shape gradients for interface problems.
\newblock In {\em New trends in shape optimization}, volume 166 of {\em
  Internat. Ser. Numer. Math.}, pages 217--227. Birkh\"auser/Springer, Cham,
  2015.

\bibitem{PANTZ-sauts}
O.~Pantz.
\newblock Sensibilit\'{e} de l'\'{e}quation de la chaleur aux sauts de
  conductivit\'{e}.
\newblock {\em C. R. Acad. Sci. I-Math.}, (341):333--337, 2005.

\bibitem{AFEM-Maute}
A.~Schleupen, K.~Maute, and E.~Ramm.
\newblock Adaptive {FE}-procedures in shape optimization.
\newblock {\em Struct. Multidiscip. O.}, 19:282--302, 2000.

\bibitem{MR3582826}
V.~H. Schulz, M.~Siebenborn, and K.~Welker.
\newblock Efficient {PDE} constrained shape optimization based on
  {S}teklov-{P}oincar\'e-type metrics.
\newblock {\em SIAM J. Optim.}, 26(4):2800--2819, 2016.

\bibitem{Shahbazi2005401}
K.~Shahbazi.
\newblock An explicit expression for the penalty parameter of the interior
  penalty method.
\newblock {\em J. Comput. Phys.}, 205(2):401 -- 407, 2005.

\bibitem{161105272}
M.~Siebenborn and K.~Welker.
\newblock Computational aspects of multigrid methods for optimization in shape
  spaces, 2016.
\newblock Preprint.

\bibitem{sokolowski1992introduction}
J.~Soko{\l}owski and J.~P. Zol{\'e}sio.
\newblock {\em Introduction to shape optimization: shape sensitivity analysis}.
\newblock Springer-Verlag, 1992.

\bibitem{verfurth2013}
R.~Verf\"urth.
\newblock {\em {A Posteriori Error Estimation Techniques for Finite Element
  Methods}}.
\newblock Oxford University Press, 2013.

\bibitem{Vohralik2011}
M.~Vohral{\'{\i}}k.
\newblock {Guaranteed and fully robust a posteriori error estimates for
  conforming discretizations of diffusion problems with discontinuous
  coefficients}.
\newblock {\em J. Sci. Comput.}, pages 1--40, 2011.

\end{thebibliography}

\end{document}